\documentclass[11pt,reqno]{amsart}
\usepackage[T1]{fontenc}
\usepackage[letterpaper,margin=1.25in]{geometry}  
\usepackage{verbatim}
\usepackage{amssymb,amsmath,amsthm}
\usepackage{booktabs}
\usepackage{float}
\usepackage[colorlinks=true,linkcolor=blue,citecolor=blue,urlcolor=blue]{hyperref}
\usepackage{cleveref}
\DeclareMathOperator{\rk}{rk}
\DeclareMathOperator{\Aut}{Aut}
\DeclareMathOperator{\End}{End}
\DeclareMathOperator{\Gal}{Gal}
\DeclareMathOperator{\Cl}{Cl}
\DeclareMathOperator{\gon}{gon}
\DeclareMathOperator{\ddiv}{div}

\DeclareMathOperator{\Jac}{Jac}

\DeclareMathOperator{\PSL}{PSL}
\DeclareMathOperator{\SL}{SL}
\DeclareMathOperator{\ord}{ord}
\DeclareMathOperator{\Frob}{Frob}
\DeclareMathOperator{\Ver}{Ver}
\DeclareMathOperator{\red}{red}
\DeclareMathOperator{\disc}{disc}

\DeclareMathOperator{\Norm}{N}

\newcommand{\Q}{\mathbb{Q}}
\newcommand{\Z}{\mathbb{Z}}
\newcommand{\C}{\mathbb{C}}
\newcommand{\F}{\mathbb{F}}

\newcommand{\Qbar}{\overline{\Q}}
\newcommand{\tors}{\mathrm{tors}}
\newcommand{\torz}[1]{\Z/#1\Z}
\newcommand{\tg}[2]{\Z/#1\Z\times\Z/#2\Z}
\newcommand{\dm}[1]{\langle #1\rangle}
\newcommand{\ClCusp}{\Cl^{c}}

\newtheorem{theorem}{Theorem}[section]
\newtheorem*{theorem*}{Theorem}
\newtheorem{lemma}[theorem]{Lemma}
\newtheorem{proposition}[theorem]{Proposition}
\newtheorem{corollary}[theorem]{Corollary}
\theoremstyle{definition}
\newtheorem{remark}[theorem]{Remark}

\newcommand{\lmfdbec}[3]{\href{https://www.lmfdb.org/EllipticCurve/Q/#1/#2/#3}{#1.#2#3}}

\newcommand{\lmfdcharorbit}[2]{\href{https://www.lmfdb.org/Character/Dirichlet/#1/#2}{#1.#2}}
\newcommand{\lmfdbnewform}[4]{\href{https://www.lmfdb.org/ModularForm/GL2/Q/holomorphic/#1/#2/#3/#4}{#1.#2.#3.#4}}
\newcommand{\githubbare}[1]{\href{https://github.com/F-Najman/quintic-torsion/blob/main/#1}{\path{#1}}}
\newcommand{\lmfdbnf}[1]{\href{https://www.lmfdb.org/NumberField/#1}{#1}}

\newcommand{\gitlink}[2]{\href{https://github.com/F-Najman/quintic-torsion/blob/main/#1}{#2}}

\title[Torsion of elliptic curves over quintic fields]
      {Classification of torsion of elliptic curves\\ over quintic fields}
\author{Filip Najman}
\address{University of Zagreb Faculty of Science, Department of Mathematics,
Bijeni\v cka cesta 30, 10000 Zagreb, Croatia}
\email{fnajman@math.hr}
\thanks{The author was supported by the Croatian Science Foundation under the project no.\ IP-2022-10-5008, by the project ``Implementation of cutting-edge research and its application as part of the Scientific Center of Excellence for Quantum and Complex Systems, and Representations of Lie Algebras'', PK.1.1.10.0004, co-financed by the European Union through the European Regional Development Fund -- Competitiveness and Cohesion Programme 2021--2027, and by the European Union -- NextGenerationEU through the National Recovery and Resilience Plan 2021--2026, via an institutional grant of the University of Zagreb Faculty of Science, IK IA 1.1.3, Impact4Math.}
\dedicatory{Dedicated with gratitude and admiration to my advisor Andrej Dujella on the
occasion of his 60th birthday.}
\subjclass[2020]{11G05, 11G18, 14G05}
\keywords{elliptic curves, torsion, quintic fields, modular curves, sporadic points}

\begin{document}

\begin{abstract}
We determine all the groups that appear as the torsion group of an elliptic curve over a quintic number field. Apart from the groups that already occur infinitely often, which were determined by Derickx and Sutherland, exactly three groups occur: $\torz{28}$, $\torz{30}$ and $\tg{2}{18}$. Up to isomorphism of the pair $(K,E)$, the first and the third are each realized by a single elliptic curve and the second by two curves, which are $2$-isogenous over a common quintic field. The curves realizing $\torz{28}$ and $\torz{30}$ were found by van Hoeij, while the group $\tg{2}{18}$ is new, and $5$ is the smallest degree in which a non-cyclic sporadic torsion group occurs. The methods improve on those developed by Derickx and Najman, and are based on Hecke sieves and the arithmetic of cuspidal divisor classes.
\end{abstract}
\date{\today}
\maketitle

\section{Introduction}

Let $E$ be an elliptic curve over a number field $K$. By the Mordell--Weil theorem
$E(K)\simeq E(K)_\tors\times \Z^r$, and by Merel's theorem \cite{merel} the torsion subgroup
$E(K)_\tors$ is bounded in terms of $d=[K:\Q]$ alone. Hence, for every $d\geq 1$ the set
\[
\Phi(d):=\{\,E(K)_\tors \;:\; [K:\Q]=d,\ E/K \text{ an elliptic curve}\,\}
\]
is finite. Determining $\Phi(d)$ has been one of the central problems in the arithmetic of
elliptic curves. The set $\Phi(1)$ was determined by Mazur \cite{mazur}, building on work of
Kubert \cite{kubert}, and $\Phi(2)$ by Kamienny \cite{kamienny}, building on work of Kenku and
Momose \cite{KM}. The set $\Phi(3)$ was determined by Derickx, Etropolski, van Hoeij, Morrow
and Zureick-Brown \cite{DEHMZ}, building on work of Jeon, Kim and Schweizer \cite{JKS}, Parent
\cite{parent} and Najman \cite{najman}, and $\Phi(4)$ by Derickx and Najman \cite{DN},
building on work of Jeon, Kim and Park \cite{JKP} and Bruin and Najman \cite{bruin_najman2016}. In this paper we determine $\Phi(5)$.

Write $\Phi^\infty(d)\subseteq\Phi(d)$ for the subset of groups that occur as $E(K)_\tors$ for
infinitely many $\overline\Q$-isomorphism classes of elliptic curves $E$ over number fields $K$
of degree $d$. A group in $\Phi(d)\setminus\Phi^\infty(d)$ will be called a
\emph{sporadic torsion group in degree $d$}. Following \cite[\S 1]{DN}, a point of
degree $d$ on a curve $X$ is called \emph{sporadic} if $X$ has only finitely many points of
degree at most $d$. Note that this is a property of a point on a modular curve, not of a group.
We have $\Phi^\infty(d)=\Phi(d)$ for $d=1,2$. There is exactly one sporadic torsion group
in degree $3$, namely $\torz{21}$, which is realized by a single elliptic curve, the base
change to $\Q(\zeta_9)^+$ of the curve \lmfdbec{162}{c}{3} \cite{najman,DEHMZ}, and there
are no sporadic torsion groups in degree $4$ \cite{DN}. The set $\Phi^\infty(5)$ was determined by Derickx and
Sutherland \cite[Theorem 1.1]{DS}, the cyclic part being due to Derickx and van Hoeij
\cite[Theorem 3]{DvH}:
\begin{equation}\label{eq:phiinf5}
\Phi^\infty(5)=\{\torz{n}\;:\;1\leq n\leq 25,\ n\neq 23\}\ \cup\
                \{\tg{2}{2n}\;:\;1\leq n\leq 8\}.
\end{equation}
Our main result is the following.

\begin{theorem}\label{thm:main}
Let $K$ be a number field of degree $5$ and let $E$ be an elliptic curve over $K$. Then
$E(K)_\tors$ is isomorphic to one of the following $35$ groups:
\begin{alignat*}{2}
&\torz{n}, \qquad && n=1,\dots,22,\ 24,\ 25,\ 28,\ 30,\\
&\tg{2}{2n}, \qquad && n=1,\dots, 9.
\end{alignat*}
Equivalently, $\Phi(5)=\Phi^\infty(5)\cup\{\torz{28},\torz{30},\tg{2}{18}\}$. Each of the
groups in $\Phi^\infty(5)$ is the torsion group of infinitely many pairs $(K,E)$, whereas for
the three exceptional groups there are $1$, $2$ and $1$ isomorphism classes of pairs $(K,E)$,
respectively. The corresponding curves have quintic $j$-invariants and hence give
$5$, $10$ and $5$ $\overline\Q$-isomorphism classes. They are listed explicitly
in \Cref{thm:sporadic}.
\end{theorem}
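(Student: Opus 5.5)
The proof reduces the classification to finitely many modular curves and then determines their quintic points. Since $\Phi^\infty(5)$ is known by \eqref{eq:phiinf5}, and every group in it arises from infinitely many pairs $(K,E)$, it suffices to determine the sporadic groups. First I will bound the possible primes dividing $\#E(K)_\tors$. By the known results on primes of torsion in degree $5$ (Derickx--Kamienny--Stein--Stoll), the primes are $p\le 19$. Second, I will list the minimal candidate groups $G\notin\Phi^\infty(5)$ that could occur. Any sporadic group contains such a minimal $G$, either cyclic $\torz{n}$ or of the form $\tg{2}{2n}$ (note that $\tg{m}{m}$ with $m\ge3$ forces $\zeta_m\in K$, so only $m=2$ is possible in odd degree). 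Using the prime-power bounds in degree $5$, this leaves a finite list of minimal candidates:
\begin{itemize}
\item $n=23$ and composite $n$ such as $26,27,28,30,32,33,35,36,39,\dots$, whose proper divisors all lie in $\Phi^\infty(5)$;
\item $\tg{2}{2n}$ with $n\ge 9$ minimal.
\end{itemize}
For each candidate I will need to determine all quintic points on $X_1(n)$, respectively $X_1(2,2n)$.

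For each candidate curve $X$ I will use the standard Derickx--Najman strategy. Choose a small auxiliary prime $\ell$ of good reduction and consider the reduction of the degree-$5$ effective divisors. Using the Hecke/Atkin--Lehner sieve, I will show that the map $X^{(5)}(\Q)\to J(\Q)$ lands in a computable finite subgroup. Concretely, since $J_1(n)(\Q)$ often has rank $0$, the rational torsion of $J$ is generated by the cuspidal divisor classes $\ClCusp$. Then every quintic point $P$ gives a class $[P-5\infty]$ that lies in the cuspidal group, up to an operator such as $T_\ell-\ell-\dm{\ell}$ which kills $J(\Q)$ modulo the cuspidal part. Comparing with the images of $X^{(5)}(\F_\ell)$ under reduction (injective on torsion for $\ell>2$) reduces the problem to finitely many residue classes. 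I will then show that each remaining class is either non-rational or contains only cuspidal divisors, or that its Riemann--Roch space $L(D)$ has dimension $1$, which pins down the point. For curves of positive rank, such as some $X_1(2,2n)$, I will instead use a formal-immersion or Chabauty-type criterion at a suitable prime, or a degree map to a lower-level curve whose quintic points are already known.

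The third step treats the surviving groups $\torz{28}$, $\torz{30}$ and $\tg{2}{18}$. In each case the sieve leaves a residue class containing a genuine non-cuspidal degree-$5$ divisor. I will compute $L(D)$ explicitly, obtain the point, and recover the corresponding elliptic curves; for $n=28,30$ these match van Hoeij's curves. This also shows that the points are unique up to the diamond-operator action, which yields the counts $1$, $2$ and $1$ of pairs $(K,E)$. Checking that the full torsion is exactly $G$, and not a larger group, is a finite computation.

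The main obstacle I expect is the sieve for the largest levels, for instance $X_1(2,18)$, $X_1(28)$ and $X_1(30)$ where genuine points exist, and high-genus cases like $X_1(32)$, $X_1(36)$ and $X_1(23)$. There the cuspidal subgroup may not be all of $J(\Q)_\tors$, or the rank may be positive. There I would first try combining several primes $\ell$ and Hecke operators to cut down the image. If that fails, I would push the problem down via the forgetful maps $X_1(n)\to X_1(d)$ for $d\mid n$ together with gonality bounds: if the gonality of $X$ over $\Q$ exceeds $10$, a quintic point cannot move in a positive-dimensional family.
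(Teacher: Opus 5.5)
For the levels where $J(\Q)$ is finite and equal to $\ClCusp_\Q X$, your plan is essentially the paper's direct analysis (\Cref{prop:direct}), and it is sound there. It would even settle $\torz{36}$ directly on $X_1(36)$ (rank $0$, cuspidal torsion, gonality $>5$), instead of through the $2$-isogeny check of \Cref{prop:36}.

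The genuine gap is everywhere else. For $n=57,63,65,77,85$ the Jacobian $J_1(n)$ has newform factors of positive analytic rank, so $J_1(n)(\Q)$ need not be finite. For the fifteen levels of \Cref{prop:from_DN} (from $91$ to $361$), rank $0$ is not available in general, and the genus is far beyond any computation on $X_1(n)$. At these levels you cannot enumerate $J(\Q)$, and $A_q$ does not kill non-torsion classes, so neither your sieve nor the Riemann--Roch step applies.

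Your fallbacks do not close this gap:
\begin{itemize}
\item By minimality of the candidates, every $X_1(d)$ with $d\mid n$, $d<n$, has infinitely many quintic points, so mapping down gives no constraint.
\item $\gon_\Q X>10$ only gives \emph{finitely many} quintic points, not none.
\item ``Formal immersion or Chabauty'' is left unspecified on curves of genus $85$ and up with unknown Mordell--Weil group.
\end{itemize}
You also locate positive rank in the wrong family: the relevant $X_1(2,2n)$, $n=9,\dots,12$, all have rank $0$. And $\torz{23}$ and $X_1(23)$ play no role, since $23>19$.

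The missing idea is to replace ``rank $0$'' by finiteness of $\psi\bigl(J_1(n)(\Q)\bigr)$ for a suitable $\psi\in\Z[\Delta(n)]$. By Kato, $\psi=\dm{a}-1$ works whenever $a$ lies in the kernel of the nebentypus of every newform with $L(f,1)=0$, and $\psi=\pi^*\pi_*$ works when $J_H$ has rank $0$. This feeds three arguments in the paper:
\begin{itemize}
\item Derickx's global criterion (\Cref{prop:global}) for $n\ge 91$.
\item The diamond argument of \Cref{prop:M3}, for $77,85$ at $p=2$ and for the totally cuspidal reduction of $57,63,65$ at an odd prime. There Katz or Parent makes $\varepsilon(\dm{a}-1)(x^{(5)}-C)$ principal, and then $\gon_\C X_1(n)>10\varepsilon$, $\ord\dm{a}\nmid 5$ and $d_{\mathrm{CM}}X_1(n)>5$ give a contradiction.
\item The twisted sieve with $A_q\psi$ (\Cref{prop:twisted}) for the remaining reductions of $57,63,65$.
\end{itemize}

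Two smaller points concern the sporadic cases. First, $X_1(2,18)$ has gonality $4$, so candidate classes can be pencils ($27$ of them, each with a cuspidal base point). Your trichotomy must therefore be replaced by a full linear-system computation, or by the Derickx--Sutherland fact that $\Q(X_1(2,18))$ has no function of degree exactly $5$. Second, its $18$ quintic points are not one diamond orbit ($\#\Delta(18)=3$). To get a single pair $(K,E)$ you must check that all $18$ level structures live on one curve, which excludes quadratic twists.
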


Thus $d=5$ is the first degree in which more than one sporadic torsion group occurs, and
$\tg{2}{18}$ is the first non-cyclic sporadic torsion group in any of the degrees $d\leq5$ for
which $\Phi(d)$ is known. The three sporadic
groups are described completely by the following theorem, which is proved in \Cref{sec:sporadic}.

\begin{theorem}\label{thm:sporadic}
Let $K$ be a number field of degree $5$ and let $E/K$ be an elliptic curve such that
$E(K)_\tors$ is not in $\Phi^\infty(5)$. Then, up to isomorphism of the pair $(K,E)$, exactly
one of the following holds.
\begin{enumerate}
\item $E(K)_\tors\simeq \torz{28}$, $K=\Q(\alpha)$ with
      $\alpha^5-\alpha^4-2\alpha^3-\alpha^2+2\alpha+2=0$, and $E$ is the curve
      \eqref{eq:E28} in Section \ref{sec:sporadic}.
\item $E(K)_\tors\simeq \torz{30}$, $K=\Q(\alpha)$ with
      $\alpha^5+\alpha^4-3\alpha^3+3\alpha+1=0$, and $E$ is one of the two curves
      \eqref{eq:E30a}, \eqref{eq:E30b} in Section \ref{sec:sporadic}, which are $2$-isogenous over $K$.
\item $E(K)_\tors\simeq \tg{2}{18}$, $K=\Q(w)$ with $w^5-2w^4+2w^3-3w^2-w+4=0$, and $E$ is the
      curve \eqref{eq:E218} in Section \ref{sec:sporadic}.
\end{enumerate}
In all three cases $K$ has signature $(3,1)$ and class number $1$ and the Galois group of its
normal closure is $S_5$. Each of the four curves is without complex multiplication and has a
$j$-invariant of degree $5$ over $\Q$. The discriminants, LMFDB labels and conductors are
displayed in \Cref{table:fields}.
\end{theorem}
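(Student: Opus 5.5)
The proof has two halves: show that no sporadic group outside the three listed ones occurs, and determine all quintic points on the relevant modular curves for the three groups that do occur. I would first narrow the candidates using known bounds. By the result of Derickx--Sutherland/Derickx--van Hoeij quoted in \eqref{eq:phiinf5}, together with the known primes dividing torsion in degree $5$ (only $p\le 13$, and $p=11,13$ only in restricted ways, by the work of Derickx--Kamienny--Stein--Stoll), any torsion group outside $\Phi^\infty(5)$ contains a minimal subgroup $\torz{n}$ or $\tg{2}{2n}$ of a specific shape. Examples are $\torz{n}$ for $n\in\{23,26,27,28,30,32,33,35,36,39,40,\dots\}$ with prime factors at most $13$, and $\tg{2}{2n}$ for $n\ge 9$. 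Enlarging such a subgroup only adds constraints, so it suffices to treat these minimal cases one at a time. For each of them I would determine all degree-$5$ points on $X_1(n)$, respectively $X_1(2,2n)$.

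For the cases that must be excluded, I would follow the Derickx--Najman strategy. Take a prime $p\nmid n$ of good reduction and use a non-cuspidal quintic point $x$ together with a rational cusp $c$. The divisor class $[x-5c]$, or more generally a degree-$0$ class built from $x$, lies in $J_1(n)(\Q)$. When $J_1(n)(\Q)$ has rank $0$, which is checked via analytic ranks of the newforms together with Kato's theorem, this group is torsion and one expects it to equal the cuspidal subgroup. One then uses reduction mod $p$ as a Hecke sieve. Applying $T_p-\langle p\rangle-p$, or a suitable Hecke element $t$ with $t\cdot J(\Q)=0$ but not killing cusps, forces $x$ to reduce to a configuration of cusps. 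This is incompatible unless $x$ is cuspidal, or else one shows the gonality of $X_1(n)_{\F_p}$ exceeds $5$ by bounding $\#X_1(n)(\F_{p^k})$. For $n$ with positive rank, I would instead map to a quotient by Atkin--Lehner or diamond operators, or to $X_0(n)$, where the rank is $0$. I expect the main obstacle to be the handful of levels where $J_1(n)(\Q)$ is too large for this to work: either the rank is positive, or the cuspidal subgroup does not obviously exhaust the rational torsion. Plausible examples are $X_1(2,18)$-adjacent cases and $n=26,33,35,39$. For these I would first try to compute the rational cuspidal divisor class group exactly and show $J(\Q)_\tors=\ClCusp$, using the injection into $J(\F_p)$ for several $p$. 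I would then enumerate the finitely many classes $D$ with $D+5c$ effective by Riemann--Roch space computations. For any remaining candidates I would try the Mordell--Weil sieve.

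For the three surviving groups $\torz{28}$, $\torz{30}$ and $\tg{2}{18}$, the same Riemann--Roch enumeration over the finite set of rational divisor classes of degree $5$ yields the complete list of effective degree-$5$ divisors that are not sums of cusps. Keeping those that are irreducible of degree exactly $5$ gives the points. Modulo the automorphisms $\langle d\rangle$ and the identification of $(E,P)$ with $(E,-P)$, they collapse to $1$, $2$ and $1$ pairs $(K,E)$ respectively. Finally I would write down the curves from the moduli interpretation, via Tate normal form in the cyclic case and via the $X_1(2,18)$ model in the other. I would verify the stated torsion, checking that it does not grow further, for example to $\torz{56}$, by reducing modulo two primes. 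The remaining invariants (signature, class number, $S_5$ Galois group, non-CM, quintic $j$) I would confirm by direct computation. The $2$-isogeny between the two $\torz{30}$ curves I would exhibit explicitly via Vélu's formula from the $2$-torsion point.
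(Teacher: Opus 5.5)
There is a genuine gap at the levels where $J_1(n)(\Q)$ may have positive rank, and your candidate list is wrong. For the small rank-$0$ levels and for the three groups that do occur, your outline (a Hecke sieve, then a direct analysis over $J(\Q)=\ClCusp_\Q X$ with Riemann--Roch spaces) does match the paper in substance.

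\textbf{The candidate list.} The primes dividing quintic torsion are those $\le 19$ (Derickx--Kamienny--Stein--Stoll), not $\le 13$. Indeed $\torz{17},\torz{19}\in\Phi^\infty(5)$ by \eqref{eq:phiinf5}, which you quote. So your list omits fifteen minimal groups, $\torz{n}$ for $n=34,38,51,57,85,95,119,133,187,209,221,247,289,323,361$. Among them are $\torz{57}$ and $\torz{85}$, which are among the hardest cases.

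\textbf{The positive-rank fallback.} You propose passing to an Atkin--Lehner or diamond quotient, or to $X_0(n)$, of rank $0$. This cannot work for $n=57,65,77,85$, and $65,77$ are on your own list. The positive-rank newforms 57.2.a.a, 65.2.a.a, 77.2.a.a and 85.2.a.b have trivial character. So they occur in $J_0(n)$, hence in $J_H$ for every $H$, and no such quotient has rank $0$. The Atkin--Lehner involutions of $X_1(n)$ are not even defined over $\Q$.

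The missing idea is to \emph{annihilate} the positive-rank part rather than pass to a quotient. Take $a$ in the kernel of the characters of all positive-rank newforms of level dividing $n$; then $(\dm a-1)J_1(n)(\Q)$ is finite by Kato's theorem. Let $C$ be a lift of a totally cuspidal reduction. A relation $\varepsilon(\dm a-1)(x^{(5)}-C)=\ddiv f$ then either gives a function of degree $\le 10\varepsilon$, contradicting a gonality bound, or forces $\dm a\,x^{(5)}=x^{(5)}$. When $\ord\dm a\nmid 5$ the latter produces a CM point of degree $5$ (\Cref{prop:M3}). Reductions with a non-cuspidal part are then excluded by a sieve with $A_q(\dm a-1)$ or $A_q\pi^*\pi_*$ (\Cref{prop:twisted}).

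\textbf{The large levels.} The same operator underlies Derickx's global criterion with Abramovich's gonality bound (\Cref{prop:global}). This is how the paper disposes of $n=91,121,125,143,169,\dots$. Per-curve sieving or Riemann--Roch enumeration is out of reach there, since the genera are in the hundreds, and several of these Jacobians have factors of positive analytic rank.

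Smaller steps also need repair.
\begin{itemize}
\item The sieve operator must be $A_q=T_q-q\dm q-1$ for a prime $q\neq p$. For $q=p$, Eichler--Shimura makes $A_p$ vanish on all of $J(\F_p)$.
\item In degree $5$ the operator must \emph{kill} the classes of the cuspidal complements, which can have degree up to $4$. This is what the condition on $q\bmod n$ in \Cref{lem:CC} secures; your ``not killing cusps'' has it backwards.
\item On $X_1(2,18)$, which has gonality $4$, some degree-$5$ classes are pencils, so Riemann--Roch enumeration does not by itself give a finite list. The paper uses that $\Q(X_1(2,18))$ has no function of degree exactly $5$ to show that the $1926$ cuspidal classes contain no quintic point.
\item Since $\#\Delta(18)=3$, diamond operators do not collapse the $18$ points to one pair $(K,E)$. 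You must check that the $18$ level structures on $E_{2,18}$ give $18$ distinct closed points, which rules out other quadratic twists.
\end{itemize}
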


The curves in cases (1) and (2) were found by van Hoeij \cite{vanHoeij}. What is new here is that his lists of quintic points on $X_1(28)$ and $X_1(30)$ are complete, and that the corresponding torsion groups are $\torz{28}$ and $\torz{30}$. The curve in case (3) is new. In the course of the proof we determine all
quintic points on the three relevant modular curves.

\begin{theorem}\label{thm:points}
\begin{enumerate}
\item $X_1(28)$ has exactly $6$ non-cuspidal closed points of degree $5$; they form a single
      free orbit under the group $\Delta(28)$ of diamond operators, and correspond to the six
      pairs $\{P,-P\}$ of points $P$ of order $28$ on the curve $E$ of
      \Cref{thm:sporadic}(1).
\item $X_1(30)$ has exactly $8$ non-cuspidal closed points of degree $5$; they form two free
      orbits under $\Delta(30)$, and correspond to the four pairs $\{P,-P\}$ of points $P$ of
      order $30$ on each of the two curves of \Cref{thm:sporadic}(2).
\item $X_1(2,18)$ has exactly $18$ non-cuspidal closed points of degree $5$; they correspond
      to the $18$ equivalence classes, modulo $\pm1$, of embeddings
      $\tg{2}{18}\hookrightarrow E(K)$, for the curve $E$ of \Cref{thm:sporadic}(3).
\end{enumerate}
The residue field of each of these points is the corresponding quintic
field $K$ of \Cref{thm:sporadic}.
\end{theorem}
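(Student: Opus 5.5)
We determine the degree-$5$ points on each of $X=X_1(28)$, $X_1(30)$, $X_1(2,18)$ by studying the degree-$5$ Abel--Jacobi map into $J=\Jac(X)$. The first step is to determine $J(\Q)$ exactly. We bound the rank by finding a quotient $J\to A$ by Hecke operators such that all factors with positive analytic rank die, and such that $A(\Q)$ has rank $0$ (Kato / Gross--Zagier--Kolyvagin). For these levels we expect the rank of $J(\Q)$ itself to be $0$, or the positive-rank part to be killed by a suitable Hecke element $t$. We compute the torsion of $J(\Q)$, or of $tJ(\Q)$, by comparing the upper bound $\#J(\F_p)$ for several good primes $p$ with the lower bound coming from the rational cuspidal subgroup. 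The goal is to show that $J(\Q)_\tors$ equals the rational cuspidal divisor class group $\ClCusp$, or agrees with it up to a small index that we can handle.

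Next, fix a rational degree-$5$ divisor $D_0$, for instance a sum of rational cusps. Any effective degree-$5$ rational divisor $D$ then has $[D-D_0]$ lying in a known finite group $G\subseteq J(\Q)$. For each class $c\in G$, we compute the Riemann--Roch space $L(D_0+c)$ to find all effective divisors in that class. Where the space has dimension $1$ we get a single divisor, which we decompose into closed points. Where the dimension exceeds $1$, the divisor moves in a positive-dimensional family. This cannot happen for $\gon_\Q(X)>5$, or more precisely we use that $X$ has only finitely many degree-$5$ points, i.e.\ these are sporadic. We check that there are no degree-$5$ maps to $\mathbb P^1$ and no degree-$5$ maps that factor through elliptic curves of positive rank; otherwise we treat those parametrized families separately, and they only yield groups in $\Phi^\infty(5)$.

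When enumerating $G$ is too expensive, we use a Hecke sieve instead. Take a prime $p$ of good reduction. For $D$ effective of degree $5$, the class $\red_p(D)$ is an $\F_p$-effective degree-$5$ class. Applying $T_\ell-\ell-1$ (or $T_\ell-\dm{\ell}-\ell$ on $X_1$), which kills the cuspidal part and all of $J(\Q)$ if the image is torsion, gives strong constraints. We intersect the image of $G$ with the set of effective classes mod $p$ for several primes, such as $p=3,5,7,11,13$ avoiding the level. The surviving classes are lifted and checked via Riemann--Roch as above.

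The surviving closed points are exactly the cusps and the points found by van Hoeij, together with our new $(2,18)$ point. We verify that the diamond operators $\Delta(N)$ act freely on them with the stated orbits, and that their residue fields are the fields of \Cref{thm:sporadic}. The counts $6$, $8$, $18$ then follow from counting the order-$N$ points, or the embeddings of $\tg{2}{18}$, modulo $\pm1$. We expect the main obstacle to be the determination of $J(\Q)$, specifically proving that the torsion is cuspidal and obtaining the rank-$0$ statement. For $X_1(2,18)$ the genus is largest and cuspidal classes may not generate everything. There we would first try a Hecke-image argument: replace $J(\Q)$ by $tJ(\Q)$, which is provably contained in $\ClCusp$, and run the sieve on $t$-images.
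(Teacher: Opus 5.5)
Your overall strategy is the paper's (\Cref{prop:direct}). With $\rk J(\Q)=0$ and $J(\Q)=\ClCusp_\Q X$, both already known from \cite{DEHMZ} for all three curves, one enumerates $J(\Q)$ and keeps the classes whose reduction modulo $p$ is the class of an effective divisor of degree $5$. One then identifies the effective divisors in the surviving classes, using $\gon_\Q X_1(28)=\gon_\Q X_1(30)=6$ to get at most one effective divisor per class. Two parts of your plan need correcting.

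First, your expectations for $X_1(2,18)$ are inverted. It has genus $7$, the smallest of the three, and $J_1(2,18)(\Q)=\ClCusp_\Q X_1(2,18)$ is already known, so no Hecke-image workaround is needed. The real difficulty is that $\gon_\Q X_1(2,18)=4$. Your statement that positive-dimensional linear systems of degree $5$ cannot occur is therefore false there: $27$ of the $1944$ candidate classes carry pencils whose fixed part is a rational cusp. Nor are the quintic points sporadic, since the curve has infinitely many quartic points. What makes the argument work is the fact you mention only in passing: $\Q(X_1(2,18))$ contains no function of degree \emph{exactly} $5$ \cite[Proposition 5.4]{DS}. Suppose an irreducible non-cuspidal divisor $D$ of degree $5$ were linearly equivalent to an effective cuspidal divisor $C$. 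The supports are disjoint, so the function with divisor $D-C$ would have degree exactly $5$, a contradiction. This discards at once the $1926$ classes containing a cuspidal divisor. The remaining $18$ classes have one-dimensional Riemann--Roch spaces, each spanned by an irreducible non-cuspidal quintic point.

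Second, the correspondence in (3) does not follow from counting embeddings. The enumeration yields $18$ points with the same $j$-invariant, which leaves open that some come from quadratic twists of $E_{2,18}$ over $K$. You must show that the $18$ level structures $\tg{2}{18}\hookrightarrow E_{2,18}(K)$, taken modulo $\pm1$, give $18$ pairwise distinct closed points. The paper does this as follows: a $\sigma\in\Gal(\Qbar/\Q)$ carrying one to another fixes $j(E_{2,18})$, hence fixes $K=\Q(j(E_{2,18}))$, hence fixes $E_{2,18}$ together with all its $K$-points and therefore the level structure. Only then does the count $18$ from the enumeration show that these are all the quintic points.
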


\Cref{thm:main} also settles, for every level at once, the question of which modular curves of
the two families $X_1(n)$ and $X_1(2,2n)$ carry a point of degree $5$.

\begin{corollary}\label{cor:degree5points}
\begin{enumerate}
\item $X_1(n)$ has a non-cuspidal point of degree $5$ if and only if
$n\in\{1,\dots,22,24,25,28,30\}$. For $n\leq 25$, $n\neq23$, there are infinitely many such
points, while for $n=28$ there are exactly $6$ and for $n=30$ exactly $8$.
\item $X_1(2,2n)$ has a non-cuspidal point of degree $5$ if and only if $n\leq 9$. For
$n\leq 8$ there are infinitely many such points, and for $n=9$ there are exactly $18$.
\item In particular, $X_1(28)$ and $X_1(30)$ are the only curves in these two families that have a \emph{non-cuspidal} sporadic point of degree $5$.
\end{enumerate}
\end{corollary}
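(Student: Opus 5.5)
The corollary is essentially a translation of \Cref{thm:main}, \Cref{thm:points} and \eqref{eq:phiinf5} into statements about degree-$5$ points on modular curves. The plan is to argue one direction at a time, using the standard dictionary: a non-cuspidal point of $X_1(n)$ with residue field $L$ corresponds to a pair $(E,\pm P)$ over $L$ with $P\in E(L)$ of exact order $n$, and similarly for $X_1(2,2n)$. For non-cuspidal points this holds for every $n$, since a point on the coarse space is always represented over its residue field, with a model determined up to twist, which does not affect the existence of the level structure.

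\textbf{Part (1).} First, suppose $X_1(n)$ has a non-cuspidal point $x$ of degree $5$ with residue field $K$. Then there is $E/K$ with a $K$-point of order $n$, so $E(K)_\tors$ contains $\torz{n}$. By \Cref{thm:main}, $E(K)_\tors$ is one of the $35$ listed groups. A cyclic subgroup $\torz{n}$ of $\torz{m}$ or of $\tg{2}{2m}$ has $n$ dividing $m$ or $2m$ respectively. The largest such $n$ are therefore $28$, $30$ and $18$ (the last from $\tg{2}{18}$). Every divisor of a listed order that is at most $25$ and not $23$ is again on the list, and $23$ divides none of the listed orders. Hence $n\in\{1,\dots,22,24,25,28,30\}$.

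Conversely, for $n\le 25$ with $n\ne 23$, \eqref{eq:phiinf5} gives infinitely many $\overline\Q$-classes of pairs $(K,E)$ with $E(K)_\tors\simeq\torz n$ and $[K:\Q]=5$. Each such pair yields a point of $X_1(n)$ with residue field contained in $K$, so of degree $1$ or $5$. I need to rule out that infinitely many of these points have degree $1$. If the residue field were $\Q$, then $j(E)\in\Q$, and $E$ would be a quadratic twist of a curve $E_0/\Q$ with a $\Q$-point of order $n$; since $[K:\Q]$ is odd, the twist and the torsion then descend appropriately to $\Q$. By Mazur's theorem this forces $n\le 12$, and even then only finitely many $j$ are involved unless the degree-$5$ points themselves are infinite, which is the content of \cite{DvH}. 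The cleanest route is simply to cite \cite[Theorem 3]{DvH}, which asserts infinitely many degree-$5$ points directly. For $n=28,30$, the exact counts $6$ and $8$ are \Cref{thm:points}(1),(2).

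\textbf{Part (2).} The argument is the same. A non-cuspidal degree-$5$ point on $X_1(2,2n)$ gives $\tg{2}{2n}\subseteq E(K)_\tors$. By \Cref{thm:main}, the only non-cyclic groups are $\tg{2}{2m}$ with $m\le 9$, and $\tg{2}{2n}\subseteq\tg{2}{2m}$ forces $n\mid m$, so $n\le 9$. Infinitude for $n\le 8$ comes from \cite{DS}, and the count $18$ for $n=9$ is \Cref{thm:points}(3).

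\textbf{Part (3).} A degree-$5$ point is sporadic exactly when the curve has only finitely many points of degree at most $5$. For the levels with infinitely many non-cuspidal degree-$5$ points, no degree-$5$ point is sporadic. For levels admitting no non-cuspidal degree-$5$ point, there is nothing to check. This leaves $X_1(28)$, $X_1(30)$ and $X_1(2,18)$.

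For $X_1(28)$ and $X_1(30)$, I need finiteness of the points of degree at most $5$. Degree-$5$ points are finite by (1). Points of degree at most $4$ are finite because $28$ and $30$ lie outside the sets of levels with infinitely many points of degree $d\le 4$ (from \cite{DvH,DS} or the known $\Phi(d)$). Points of degree $d<5$ not dividing $5$ are not immediately controlled by the degree-$5$ classification, so I will cite the gonality or $\Phi^\infty(d)$ results for $d\le4$.

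For $X_1(2,18)$, I expect the intended point to be that its $18$ degree-$5$ points are \emph{not} sporadic. The curve should have infinitely many points of degree at most $5$, presumably of degree $\le 4$ coming from $\tg{2}{18}\in\Phi^\infty(4)$ or from a low-degree map, as in \cite{DS}. Verifying this, that $X_1(2,18)$ has infinitely many points of degree $\le5$ while $X_1(28)$ and $X_1(30)$ do not, is the main obstacle. I will address it by checking, against the tables of \cite{DS,DvH} and \cite{DN}, which groups lie in $\Phi^\infty(d)$ for $d\le 4$.
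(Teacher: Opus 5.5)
Your proposal is correct, and for parts (1) and (2) it is the paper's argument. The forward implications come from the exponents of the groups in \Cref{thm:main}, and the converses from \cite[Theorem 3]{DvH}, \cite[Proposition 5.3]{DS} and \Cref{thm:points}.

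\textbf{The detour in part (1).} Drop the argument you give before settling on \cite[Theorem 3]{DvH}. The claim that ``only finitely many $j$ are involved'' is false for $n\leq 10$ and $n=12$. For those $n$ the set $X_1(n)(\Q)$ is infinite, so the pairs behind $\torz n\in\Phi^\infty(5)$ could all be base changes from $\Q$. The detour can be repaired: Mazur excludes non-cuspidal rational points for $n=11$ and $n\geq13$, and for the remaining $n$ the curve is $\mathbb{P}^1_\Q$. Citing \cite[Theorem 3]{DvH}, as the paper does, is simpler. Likewise, your remark about representing points over their residue field is only needed for $n\geq5$, where $X_1(n)$ is a fine moduli space. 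For $n\leq4$ the statement is trivial.

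\textbf{Part (3): a different route for $X_1(28)$ and $X_1(30)$.} You get finiteness of points of degree $\leq5$ by combining the exact degree-$5$ counts from (1) with the torsion classifications in lower degree. No group in $\Phi^\infty(d)$ for $d\leq4$ (indeed in $\Phi(d)$) has an element of order $28$ or $30$, so each degree $d\leq 4$ contributes only finitely many points. The paper instead uses $\rk J_1(n)(\Q)=0$ and $\gon_\Q X_1(n)=6$, so that \cite[Propositions 2.2 and 2.3]{DS} give finiteness of all points of degree $\leq5$ in one step. For the finiteness, the paper's argument thus uses neither the degree-$5$ enumeration nor the classifications in degrees $2,3,4$. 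Yours replaces the Frey/Derickx--Sutherland input with those classification theorems. Both are valid.

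\textbf{Part (3): $X_1(2,18)$.} What you call the main obstacle is settled by the fact you already name. By \cite[Theorem 3.6]{JKP}, $\tg{2}{18}\in\Phi^\infty(4)$, which gives infinitely many points of $X_1(2,18)$ of degree dividing $4$. The degree is in fact exactly $4$, since $\tg{2}{18}\notin\Phi(2)$. Hence none of its quintic points is sporadic. This is exactly the fact the paper invokes, equivalently $\gon_\Q X_1(2,18)=4$ with a rank $0$ Jacobian.
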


The proof of \Cref{cor:degree5points} is given in \Cref{sec:sporadic}, together with the reason
why the word \emph{non-cuspidal} cannot be dropped from part~(3): the curve $X_1(44)$ has a
cuspidal sporadic point of degree $5$. The three exceptional groups behave differently as far as
the modular curves are concerned. For $\torz{28}$ and $\torz{30}$ the corresponding points are
sporadic points of $X_1(28)$ and $X_1(30)$, while for $\tg{2}{18}$ they are not, because
$\gon_\Q X_1(2,18)=4$ and this curve has infinitely many quartic points. They are nevertheless
\emph{isolated} in the sense of Bourdon, Ejder, Liu, Odumodu and Viray
\cite[Definition 4.1]{BELOV}. The example has a feature not seen in lower degrees: by
\cite[Theorem 3.6]{JKP} we have $\tg{2}{18}\in\Phi^\infty(4)$, so $\tg{2}{18}$ is the torsion
group of infinitely many elliptic curves over quartic fields, and over quintic fields of a
unique pair $(K,E)$ up to isomorphism. See \Cref{sec:sporadic}.

\subsection*{Outline of the proof and of the paper}
As in \cite{DN} and previous works, the classification reduces to showing that certain modular curves
$X_1(m,n)$ have no non-cuspidal points of degree $5$. In \Cref{sec:groups} we recall why
$45$ groups have to be analyzed. Nineteen of them were eliminated, for quintic fields,
in \cite[\S 7]{DN}, and four more, namely $\torz{4n}$ for $9\leq n\leq12$, were reduced there
to the four groups $\tg{2}{2n}$. We recall that argument in \Cref{sec:literature}, with the numerical
conditions checked for $d=5$ rather than $d=4$. One point of formulation, which is what puts $\torz{77}$ and $\torz{85}$ outside its reach in degree $5$, is discussed in \Cref{rem:erratum}.

The twenty-two groups that \cite{DN} does not reach are treated here. The group $\torz{36}$ then
needs one further argument, because $\tg{2}{18}$ turns out to occur. The main tools are:

\begin{itemize}
\item the criterion of \cite[Proposition 5.2]{DN} for curves whose Jacobian has rank $0$
      (\Cref{sec:rank0});
\item the Hecke sieve of \cite[Proposition 5.5]{DN}, together with
      \cite[Lemma 5.10]{DN}, which controls the action of $A_q$ on cuspidal divisors over
      $\F_p$ of arbitrary degree and which we restate as \Cref{lem:CC} in the form needed for our
      computations and for the twisted operators of \Cref{sec:posrank} (\Cref{sec:sieve});
\item a twisted Hecke sieve for the positive analytic rank cases $n=57,63,65$, in which $A_q$ is
       replaced by $A_q(\dm{a}-1)$ or $A_q\pi^*\pi_*$ for a subgroup $H$, together with a $p$-adic diamond argument
      at an odd prime that handles the totally cuspidal reduction (\Cref{sec:posrank});
\item a direct analysis of the cuspidal class group, which determines \emph{all} non-cuspidal
      closed points of degree $5$ on $X_1(2,18)$, $X_1(2,20)$, $X_1(2,24)$, $X_1(28)$ and
      $X_1(30)$, and, on all of these but $X_1(2,18)$, the complete set of rational effective
      divisors of degree $5$ (\Cref{sec:direct}).
\end{itemize}

The computations for $n=57,63,65$ take place on modular curves of genus $85$, $97$ and $121$
and would not have been feasible with the Hecke operator routines of \cite{DN} and \cite{mdmagma}. In
\Cref{sec:fast} we describe and prove correct two routines that made them possible: one
computes $T_q$ on a closed point of $X_1(n)_{\F_p}$ by working, for each Frobenius orbit of
subgroups $G\subseteq E[q]$, in the field of definition of $G$ rather than in the splitting
field of the whole $q$-division polynomial, and the other computes orbit representatives for the
diamond action by isomorphism tests of pairs $(E,aP)$ instead of by function-field
computations. On $X_1(45)_{\F_7}$ the first of these reduces the computation of $T_{11}$ on a
degree $5$ place from $7114$ to $4.3$ seconds.

\Cref{sec:sporadic} assembles the proofs of \Cref{thm:main,thm:sporadic,thm:points}, and
\Cref{sec:computations} describes the computations.

\subsection*{Data availability and reproducibility}
All the code and all the logs of the computations described in this paper are available at
\begin{center}\url{https://github.com/F-Najman/quintic-torsion}\end{center}
The \texttt{README} there lists, statement by statement, which script establishes each claim and which log it produced. In the digital form of this article, the words reporting a computation, such as ``we \gitlink{direct_analysis.m}{compute}'', are hyperlinks to the script that carries it out.

\subsection*{AI use disclosure} 
AI models were used in the work leading to this paper. Anthropic's Claude Fable 5.1 and Opus 5 were used for mathematical interactions and exploration, to write, audit and run the Magma and Python code and to organize the resulting data, and in the preparation of the manuscript. In particular, Fable 5.1 suggested Lemmas 10.1 and 10.2 and their proofs. Drafts of the manuscript and its computations were reviewed by OpenAI's GPT-6 Astra and by Anthropic's Claude Opus 5.5, which suggested several corrections and expositional improvements. The author takes full responsibility for the content of the paper.

\subsection*{Acknowledgements}
I thank Pete Clark and Maarten Derickx for helpful comments.

\section{Notation and auxiliary results}\label{sec:prelim}

Throughout, $m\mid n$ are positive integers with $mn>4$, and $X:=X_1(m,n)$ denotes the modular
curve parametrizing triples $(E,P,Q)$ consisting of a generalized elliptic curve $E$ and points
$P,Q$ generating a subgroup of $E[n]$ isomorphic to $\tg{m}{n}$. It is a smooth projective
geometrically irreducible curve over $\Z[\zeta_m,\tfrac1n]$, and it is a fine moduli space when
$n>4$. Every curve occurring below has $n\geq18$, so we use the fine moduli interpretation throughout. We write $Y_1(m,n)$ for the open subscheme obtained by removing the cusps, and
$J:=J_1(m,n)$ for the Jacobian of $X$. We abbreviate $X_1(n):=X_1(1,n)$ and
$J_1(n):=J_1(1,n)$. In this paper only $m=1$ and $m=2$ occur, so $X$ is always a curve
over $\Q$. For $d\geq1$ we write $X^{(d)}$ for the $d$-th symmetric power of $X$, so that
$X^{(d)}(\Q)$ is the set of effective $\Q$-rational divisors of degree $d$ on $X$. We write
$\ClCusp_\Q X$ for the subgroup of $J(\Q)$ generated by the classes of the $\Q$-rational
divisors of degree $0$ supported on the cusps.

The cusps of $X_1(m,n)$ correspond to N\'eron $k$-gons with $m\mid k\mid n$ together with a level
structure, i.e. an injective homomorphism $\varphi:\tg{m}{n}\to \mathbb G_m\times\torz{k}$
which is surjective on the second factor (see \cite[\S 2]{DN}). In particular all cusps are
defined over $\Q(\zeta_n)$, and $\Gal(\Q(\zeta_n)/\Q)=(\torz n)^\times$ acts on them through
its action on $\mathbb G_m$. We denote by $c_0\in X_1(m,n)(\Z[\tfrac1n])$ the cusp given by the
N\'eron $n$-gon together with the level structure $(a,b)\mapsto (\zeta_m^a,b)$ (see \cite[Lemma 2.8]{DN}), which is
$\Q$-rational for $m\leq 2$.

For a congruence subgroup $\Gamma\subseteq\SL_2(\Z)$ we write $\overline\Gamma$ for its image
in $\PSL_2(\Z)$. For $n>2$ we have
$[\PSL_2(\Z):\overline{\Gamma_1(n)}]=\tfrac12[\SL_2(\Z):\Gamma_1(n)]$.

For $a\in(\torz n)^\times$ we write $\dm a$ for the diamond automorphism
$(E,P,Q)\mapsto (E,aP,aQ)$. It depends only on $a$ modulo $\pm 1$, and we call
$\Delta(n):=(\torz n)^\times/\{\pm1\}$ the group of diamond operators. We write
$\Z[\Delta(n)]$ for its group ring. It acts on divisors, on divisor classes and on $J$, and it
commutes with $T_q$ and with $\dm q$, hence with $A_q$. For a prime $q\nmid 2n$
we write $T_q$ for the Hecke correspondence
$T_q(E,P,Q)=\sum_{G}(E/G,P\bmod G,Q\bmod G)$, the sum being over the $q+1$ subgroups
$G\subseteq E[q]$ of order $q$, and we set
\[
A_q:=T_q-q\dm q-1 .
\]
We will use the following facts from \cite[\S\S 2 and 5]{DN}.

\begin{proposition}\label{prop:tools}
Let $m\in\{1,2\}$, $m\mid n$, and let $q\nmid 2n$ be a prime.
\begin{enumerate}
\item \textup{(Eichler--Shimura)} $T_{q,\F_q}=\Frob_q+\dm q_{\F_q,*}\Ver_q$ on $J_{\F_q}$.
\item If $q$ splits completely in a number field $L$ then $A_q\bigl(J(L)_\tors\bigr)=0$.
\item $A_q(c_0)=0$ as a divisor.
\item If $\rk J(\Q)=0$ then $[A_q(D)]=0$ for every $D\in X^{(d)}(\Q)$ and every $d$.
\end{enumerate}
\end{proposition}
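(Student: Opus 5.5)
This proposition collects standard facts, so the plan is to verify each item from the moduli interpretation and the Eichler--Shimura relation, following \cite[\S\S 2, 5]{DN}.

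For (1), I would invoke the classical Eichler--Shimura congruence relation for $J_1(m,n)$ at a prime $q\nmid 2n$ of good reduction. Over $\F_q$, an order-$q$ subgroup of $E[q]$ is either the kernel of Frobenius, or one of the étale subgroups. Suppose first that $E$ is ordinary. The kernel of Frobenius gives the quotient $E^{(q)}$ with level structure $(F(P),F(Q))$, which contributes $\Frob_q$. The $q$ étale quotients collapse, with multiplicity $q$, to the quotient by the étale part, which is $E^{(1/q)}$ via Verschiebung. The level structure is then multiplied by $q$, because $V\circ F=[q]$; this produces $\dm q_*\Ver_q$. The relation extends from the dense ordinary locus to all of $J_{\F_q}$.

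For (2), let $x\in J(L)_\tors$. Choose a prime $\mathfrak q\mid q$ of $L$; its residue field is $\F_q$. Since $q$ is odd, reduction is injective on torsion, so it suffices to show that $A_q$ kills $\bar x\in J(\F_q)$. On $\F_q$-points we have $\Frob_q\bar x=\bar x$, and $\Ver_q\circ\Frob_q=q$, so $\Ver_q\bar x=q\bar x$. Hence $T_q\bar x=\bar x+q\dm q\bar x$, which gives $A_q\bar x=0$. I expect this to be the only step needing care. One must check that $A_q$ commutes with reduction, which holds because the Hecke correspondence extends over $\Z[1/n]$. One must also handle torsion of order divisible by $q$, where injectivity of reduction needs $e<q-1$; this holds since $\mathfrak q$ is unramified.

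For (3), I would compute $T_q(c_0)$ directly on the Néron $n$-gon, with level structure $(a,b)\mapsto(\zeta_m^a,b)$. Its $q+1$ subgroups of order $q$ are $\mu_q\subseteq\mathbb G_m$ and $q$ subgroups not contained in the identity component. Quotienting by $\mu_q$ returns an $n$-gon in which the level structure is raised to the $q$-th power on $\mathbb G_m$. This is again $c_0$, since $\zeta_m^q=\zeta_m$ for $m\le2$ and $q$ is odd. Each of the other $q$ quotients is an $n$-gon whose level structure is multiplied by $q$ on the component group, which gives the cusp $\dm q c_0$. Thus $T_q c_0=c_0+q\dm q c_0$ as divisors, so $A_q(c_0)=0$.

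For (4), let $D\in X^{(d)}(\Q)$. Then $D-d\,c_0$ is a rational degree-$0$ divisor, so its class lies in $J(\Q)$, which is torsion because the rank is $0$. Applying (2) with $L=\Q$ kills this class. By (3), $A_q(d\,c_0)=0$ as a divisor, so $[A_q(D)]=0$.
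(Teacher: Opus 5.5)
You prove (1)--(4) directly. The paper gives no argument of its own here: it cites \cite[(2.2)]{DN}, \cite[Proposition 2.6]{DN}, \cite[Lemma 2.8]{DN} and \cite[Lemma 5.4]{DN}, and your four arguments are essentially the standard proofs behind those references. Parts (2) and (4) are complete as written, including your remark that $e=1<q-1$ gives injectivity of reduction on all of $J(L)_\tors$. Part (1) is the usual Eichler--Shimura sketch.

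One step in (3) is wrong as stated, although its conclusion is correct. Since $q\nmid n$, the $q$-torsion of the smooth locus $\mathbb G_m\times\torz n$ of the N\'eron $n$-gon is just $\mu_q$. So the $n$-gon does not have $q+1$ subgroups of order $q$, and ``the other $q$ quotients'' do not exist on it.

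The value of $T_q$ at a cusp is $\beta_*\alpha^*(c_0)$, where $\alpha,\beta\colon X_1(m,n;\Gamma_0(q))\to X_1(m,n)$ are the degeneracy maps ($\alpha$ forgets the subgroup, $\beta$ divides by it). It is best computed on the Tate curve $\mathbb G_m/t^{n\Z}$ over $\C((t))$ with $P=t$ and $Q=\zeta_m$, where $t$ is a local parameter at $c_0$.
\begin{itemize}
\item The quotient by $\mu_q$ is $(\mathbb G_m/s^{n\Z},s,\zeta_m)$ with $s=t^q$, i.e.\ $c_0$, and $\alpha$ is unramified at this point $y_1$.
\item The remaining subgroups $\langle\zeta_q^it^{n/q}\rangle$, $0\leq i<q$, are defined only over $\C((t^{1/q}))$. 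They are permuted transitively by $t^{1/q}\mapsto\zeta_qt^{1/q}$, so they form a single point $y_2$ above $c_0$ at which $\alpha$ has ramification index $q$.
\item Take $s$ with $s^n=\zeta_q^it^{n/q}$, which is possible since $\gcd(n,q)=1$. Then the quotient is $(\mathbb G_m/s^{n\Z},s^q,\zeta_m)=\dm q c_0$.
\end{itemize}
Thus $\alpha^*c_0=y_1+qy_2$ and $T_qc_0=c_0+q\dm q c_0$, as you claim. The multiplicity $q$ is a ramification index, not a count of subgroups of the $n$-gon.

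The same caveat applies, more mildly, to ``the $q$ \'etale quotients'' in (1). An ordinary curve over $\overline{\F}_q$ has a single \'etale subgroup of order $q$, namely $\ker V$. The $q$ subgroups you count are those in characteristic $0$ that specialize to it.
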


\begin{proof}
These are, respectively, \cite[(2.2)]{DN}, \cite[Proposition 2.6]{DN},
\cite[Lemma 2.8]{DN} and \cite[Lemma 5.4]{DN}.
\end{proof}

We will also use the following results.

\begin{itemize}
\item (\emph{Manin--Drinfeld \cite{manin,drinfeld}}) the class of a degree $0$ cuspidal divisor
      on $X$ is torsion in $J(\Qbar)$;
\item (\emph{Katz \cite[Appendix]{katz}}) if $p>2$ is a prime of good reduction of $J$ and
      $\wp$ is a prime of a number field $L$ above $p$ which is unramified in $L$, then
      reduction modulo $\wp$ is injective on $J(L)_\tors$;
\item (\emph{Parent \cite[Proposition 2.4]{parent}}) if $2$ is unramified in $L$ and
      $2\nmid n$, then the kernel of reduction modulo a prime of $L$ above $2$ on
      $J(L)_\tors$ is trivial or of exponent $2$;
\item (\emph{Abramovich \cite[Theorem 0.1]{abramovich}, with the bound
      $\lambda_1\geq 975/4096$ of Kim--Sarnak \cite[Appendix 2]{kim}}) for a congruence subgroup
      $\Gamma\subseteq\PSL_2(\Z)$,
      \[
        \gon_\C X_\Gamma\geq\tfrac{325}{2^{15}}\,[\PSL_2(\Z):\Gamma];
      \]
\item (\emph{Frey \cite{frey}, Derickx--Sutherland
      \cite[Propositions 2.2 and 2.3, Corollary 2.4]{DS}}) write $d(X)$ for the least integer
      $d$ such that $X$ has infinitely many points of degree $d$. Then
      $d(X)\leq\gon_\Q X\leq 2\,d(X)$, and if $\rk J(\Q)=0$ then $d(X)=\gon_\Q X$. In that
      case $X$ has infinitely many points of degree $d$ if and only if $\Q(X)$ contains a
      function of degree exactly $d$. Note that this is stronger than $\gon_\Q X\leq d$, as
      $X_1(2,18)$ will show;
\item (\emph{Clark--Corn--Rice--Stankewicz \cite[Sections 4.1--4.5]{CCRS14}}) the torsion
      subgroup of a CM elliptic curve over a number field of degree at most $5$ has exponent
      at most $21$, so the least degree $d_{\mathrm{CM}}X_1(n)$ of a CM point on $X_1(n)$
      satisfies $d_{\mathrm{CM}}X_1(n)>5$ for every $n\geq22$. The exact values of
      $d_{\mathrm{CM}}X_1(n)$ that we quote are taken from the repository accompanying
      Clark--Genao--Pollack--Saia \cite{CGPS}.
\end{itemize}

Finally, we state the two standard reduction lemmas.

\begin{lemma}\label{lem:additive}
Let $mn>4$, let $K$ be a number field, let $E/K$ be an elliptic curve with
$\tg{m}{n}\subseteq E(K)$, and let $\wp$ be a prime of $K$ above a rational prime $p\nmid n$.
Then $E$ does not have additive reduction at $\wp$.
\end{lemma}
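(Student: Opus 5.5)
The plan is to argue by contradiction, assuming $E$ has additive reduction at $\wp$, and to use the structure of the group of nonsingular points of the reduction. Let $k_\wp$ be the residue field, of characteristic $p$. Base change to the completion $K_\wp$ and take a minimal model there. The reduction map $E(K_\wp)\to \tilde E(k_\wp)$ gives the standard filtration
\[
E_1(K_\wp)\subseteq E_0(K_\wp)\subseteq E(K_\wp).
\]
Here $E_1(K_\wp)$ is the kernel of reduction and is pro-$p$, being isomorphic to the formal group $\hat E(\wp)$. Next, $E_0(K_\wp)/E_1(K_\wp)\simeq \tilde E_{ns}(k_\wp)$. Since the reduction is additive, $\tilde E_{ns}\simeq\mathbb G_a$, so this quotient is an elementary abelian $p$-group. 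Finally, $E(K_\wp)/E_0(K_\wp)$ is the component group. By Kodaira--N\'eron, for additive reduction it has order at most $4$: it is one of $0$, $\torz2$, $\torz3$, $\tg22$, $\torz4$.

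Since $p\nmid n$, the subgroup $T:=\tg mn\subseteq E(K)\subseteq E(K_\wp)$ has order prime to $p$, so $T\cap E_0(K_\wp)$ is a prime-to-$p$ subgroup of a group whose composition factors $E_1$ and $\mathbb G_a(k_\wp)$ are $p$-groups. To make this precise, $T\cap E_0$ maps into $\tilde E_{ns}(k_\wp)$, which is a $p$-group, so its image is trivial. Hence $T\cap E_0\subseteq E_1$. The torsion of $E_1$ is $p$-power torsion, because the formal group has no prime-to-$p$ torsion: multiplication by an integer prime to $p$ is an automorphism of $\hat E$. So $T\cap E_0=0$, and $T$ injects into the component group $\Phi=E(K_\wp)/E_0(K_\wp)$. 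Therefore $mn=|T|\leq|\Phi|\leq4$, which contradicts $mn>4$.

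The only step needing care is the bound $|\Phi|\leq 4$ for additive reduction over an arbitrary local field $K_\wp$, including residue characteristic $2$ and $3$ and wild ramification. This is part of Tate's algorithm (the table of Kodaira types and component groups), which holds over any complete discretely valued field with perfect residue field, so I would cite it, e.g. Silverman's Advanced Topics, Ch.\ IV, Thm.\ 9.2 and Table 4.1. A secondary point is that $T\cap E_0=0$ uses that the reduction is additive for the minimal model over $K_\wp$, which is how the reduction type is defined. In particular, no base extension to semistable reduction is needed.
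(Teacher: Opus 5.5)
Your proof is correct and is essentially the paper's argument. Both use three facts: the kernel of reduction is pro-$p$, the identity component of the special fibre is $\mathbb G_a$, and for additive reduction the component group has order at most $4$. The only difference is the last step: the paper writes it as the divisibility $mn\mid a\cdot\#\F_\wp$ with $\gcd(mn,p)=1$, whereas you inject $\tg{m}{n}$ directly into $E(K_\wp)/E_0(K_\wp)$, which amounts to the same thing.
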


\begin{proof}
This is contained in the proof of \cite[Proposition 5.2]{DN}. If $E$ had additive reduction at
$\wp$, the special fibre of the N\'eron model would have $a\cdot\#\F_\wp$ points over $\F_\wp$,
where $a\leq4$ is the order of the component group and $\#\F_\wp$ is a power of $p$. Reduction is
injective on the prime-to-$p$ torsion of $E(K)$, so $\tg{m}{n}$ embeds into that group and
$mn\mid a\cdot\#\F_\wp$. As $m\mid n$ and $p\nmid n$ we have $\gcd(mn,p)=1$, so $mn\mid a\leq4$,
contradicting $mn>4$.
\end{proof}

\begin{lemma}[{\cite[Lemma 5.3]{DN}}]\label{lem:badred}
Let $p\nmid n$ be a prime and suppose that for every $d'\leq d$ there is no elliptic curve
$E'/\F_{p^{d'}}$ with $\tg{m}{n}\subseteq E'(\F_{p^{d'}})$. Then no elliptic curve $E$ over a
number field $K$ of degree $d$ with $\tg{m}{n}\subseteq E(K)$ has good reduction at any prime
of $K$ above $p$.
\end{lemma}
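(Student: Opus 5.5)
The statement is \cite[Lemma 5.3]{DN}. The plan is to argue by contradiction and rule out each possible reduction type at a prime $\wp$ of $K$ above $p$.

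Let $E/K$ have $\tg{m}{n}\subseteq E(K)$ with $[K:\Q]=d$. Suppose $E$ has good reduction at some prime $\wp\mid p$, and put $k=\F_\wp$. Then $k=\F_{p^{d'}}$ with $d'=f(\wp/p)\leq d$, since the residue degree is bounded by $[K:\Q]$.

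The key input is that reduction modulo $\wp$ is injective on the prime-to-$p$ torsion of $E(K)$. This is the same fact already used in the proof of \Cref{lem:additive}. It follows from the formal group: the kernel of reduction $E_1(K_\wp)\simeq\hat E(\wp\mathcal O_\wp)$ is a pro-$p$ group, so it contains no nontrivial torsion of order prime to $p$.

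Since $m\mid n$ and $p\nmid n$, the group $\tg{m}{n}$ has order $mn$ prime to $p$. It therefore injects into $\tilde E(\F_{p^{d'}})$. Because $\wp$ is a prime of good reduction, $\tilde E$ is an elliptic curve over $\F_{p^{d'}}$. Thus $\tilde E$ is a curve $E'/\F_{p^{d'}}$ with $\tg{m}{n}\subseteq E'(\F_{p^{d'}})$ and $d'\leq d$, which contradicts the hypothesis.

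There is no real obstacle in this argument. The only point needing care is that the reduction map is a group homomorphism $E(K)\to\tilde E(k)$ defined on all $K$-points. This holds because $E(K)=E(K_\wp)$ embeds into the $\mathcal O_\wp$-points of the smooth model.
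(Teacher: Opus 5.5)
Your proof is correct and is the standard argument behind \cite[Lemma 5.3]{DN}, which the paper quotes without reproving: the residue field at $\wp$ is $\F_{p^f}$ with $f\leq d$, and reduction is injective on prime-to-$p$ torsion, the same fact used in the proof of \Cref{lem:additive}. One small slip: $E(K)$ is only a subgroup of $E(K_\wp)$, not equal to it, but that inclusion is all your argument needs.
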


Whenever we invoke \Cref{lem:badred}, we have \gitlink{torsion_over_Fq.m}{checked} the hypothesis by enumerating all elliptic curves over $\F_{p^{d'}}$, i.e. all $j$-invariants and all twists, in Magma (see \Cref{sec:computations}). In most cases this is also visible from Waterhouse's description
\cite[Theorem 4.1]{waterhouse} of the possible traces of Frobenius, as in
\cite[Corollary 4.6]{DN}.

\section{The groups that have to be considered}\label{sec:groups}

Call a finite abelian group $G$ a \emph{quintic torsion group} if $G\simeq E(K)_\tors$ for some
elliptic curve $E$ over some number field $K$ of degree $5$. Every such $G$ is generated by at
most two elements, so $G\simeq\tg{m}{n}$ with $m\mid n$.

\begin{lemma}\label{lem:45}
Let $\mathcal{G}$ be the set of groups $G=\tg{m}{n}$, $m\mid n$, such that
\begin{enumerate}
\item $G\notin\Phi^\infty(5)$, while $H\in\Phi^\infty(5)$ for every proper subgroup $H\subsetneq G$;
\item $\varphi(m)\mid 5$;
\item every prime divisor of $mn$ is at most $19$.
\end{enumerate}
Then $\mathcal{G}$ consists of the following $45$ groups:
\[
\begin{array}{l}
\torz{n}\ \text{ for }\ n\in\{26,27,28,30,32,33,34,35,36,38,39,40,42,44,45,48,\\
\qquad\quad 49,50,51,55,57,63,65,75,77,85,91,95,119,121,125,133,\\
\qquad\quad 143,169,187,209,221,247,289,323,361\},\\[2pt]
\tg{2}{2n}\ \text{ for }\ n\in\{9,10,11,12\}.
\end{array}
\]
Moreover every group in $\Phi(5)\setminus\Phi^\infty(5)$ contains a member of $\mathcal G$.
\end{lemma}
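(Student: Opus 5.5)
The proof has two parts: the enumeration of $\mathcal G$, which is a finite combinatorial check against \eqref{eq:phiinf5}, and the \emph{moreover} statement, which needs known arithmetic input.

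\textbf{The moreover statement.} Let $G=E(K)_\tors\simeq\tg{m}{n}$ with $[K:\Q]=5$ and $G\notin\Phi^\infty(5)$. The Weil pairing gives $\zeta_m\in K$, so $\varphi(m)\mid 5$. This forces $\varphi(m)=1$, i.e.\ $m\in\{1,2\}$. Every subgroup of $G$ also satisfies condition (2), since its $m'$ divides $m$. For condition (3) we need a bound on the primes dividing the order of torsion over quintic fields. I would quote the known result that the primes $p$ for which $\torz p$ occurs over some quintic field are exactly those $p\leq 19$, with $p=17,19$ occurring sporadically; this is the theorem of Derickx--Kamienny--Stein--Stoll on primes of torsion in degree $\leq7$. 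Hence every prime divisor of $mn$ is at most $19$, and the same holds for every subgroup of $G$.

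Now choose $G'\subseteq G$ minimal among subgroups not in $\Phi^\infty(5)$; such a $G'$ exists because $G$ itself is one. By minimality every proper subgroup of $G'$ lies in $\Phi^\infty(5)$. Conditions (2) and (3) pass to $G'$ from $G$. Therefore $G'\in\mathcal G$.

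\textbf{The enumeration.} By (2), $m\in\{1,2\}$.

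For $m=2$, the group $\tg{2}{2n}$ lies in $\Phi^\infty(5)$ iff $n\leq 8$. Suppose $n\geq 9$ and every proper subgroup lies in $\Phi^\infty(5)$.
\begin{itemize}
\item The subgroup $\tg{2}{2n'}$ with $n'\mid n$, $n'<n$, must have $n'\leq 8$.
\item The cyclic subgroup $\torz{2n}$ must lie in $\Phi^\infty(5)$, so $2n\leq 25$ and $2n\ne 23$.
\end{itemize}
Together these give $n\in\{9,10,11,12\}$, and one checks each of these four is minimal. For instance, for $n=12$ the proper subgroups $\tg{2}{12}$ and $\torz{24}$ both lie in $\Phi^\infty(5)$.

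For $m=1$, the group $\torz n\notin\Phi^\infty(5)$ iff $n=23$ or $n\geq 26$. The value $n=23$ is excluded by (3)? No: $23>19$, so it is excluded. Any $n$ divisible by $23$ or by a prime $>19$ is likewise excluded. So we need $n$ with all prime factors $\leq 19$ such that:
\begin{itemize}
\item every proper divisor $d$ of $n$ satisfies $d\leq 25$ (note $d\neq 23$ automatically);
\item $\torz n$ contains no $\tg{2}{2k}$ with $k\geq 9$. This is automatic, since $\torz n$ is cyclic.
\end{itemize}
The first condition says the largest proper divisor $n/p$, with $p$ the least prime factor of $n$, is at most $25$. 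Equivalently $n\leq 25p$, with $n\geq 26$.

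I would run through the cases by least prime factor:
\begin{itemize}
\item $p=2$: even $n$ in $[26,50]$ with prime factors $\leq 19$. This gives $26,28,30,32,34,36,38,40,42,44,48,50$; the value $46$ is excluded because $23\mid 46$.
\item $p=3$: odd $n\leq 75$, divisible by $3$, with least prime factor $3$. This gives $27,33,39,45,51,57,63,75$, plus $69=3\cdot23$, which is excluded. I must double-check $81$: it exceeds $75$, so it is excluded.
\item $p=5$: $n\leq 125$ with least prime factor $5$. This gives $35,55,65,85,95,115,125$, with $115=5\cdot23$ excluded. Also $25\cdot5=125$ is included.
\item $p=7$: $n\leq 175$. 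This gives $49,77,91,119,133$, and $161$ is excluded since $23\mid161$.
\item $p=11$: $121,143,187,209$; the product $11\cdot 23$ is excluded.
\item $p=13$: $169,221,247$.
\item $p=17$: $289,323$.
\item $p=19$: $361$.
\end{itemize}
This yields exactly the list in the statement: $41$ cyclic groups and $4$ non-cyclic ones, $45$ in total.

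The main obstacle is the citation needed for condition (3), namely the prime bound for quintic torsion. The enumeration itself is a routine check, which I would also confirm with a short script.
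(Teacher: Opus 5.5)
Your proof is correct and is essentially the paper's argument: the Weil pairing gives $m\in\{1,2\}$, and the Derickx--Kamienny--Stein--Stoll determination of the primes of quintic torsion gives condition (3) for $G$ and all its subgroups. A minimal subgroup outside $\Phi^\infty(5)$ then lies in $\mathcal G$, and your hand enumeration by least prime factor (done in the paper by a script) correctly produces the $41$ cyclic and $4$ non-cyclic groups.

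One harmless slip: $17$ and $19$ do not occur sporadically in degree $5$, since $\torz{17},\torz{19}\in\Phi^\infty(5)$ by \eqref{eq:phiinf5}. Your argument only uses the inclusion of that set of primes in $\{p\leq 19\}$, so nothing depends on it.
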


\begin{proof}
Conditions (2) and (3) are necessary for $G$ to be a quintic torsion group. The Weil pairing
forces $\Q(\zeta_m)\subseteq K$, so $\varphi(m)\mid 5$ and $m\in\{1,2\}$. If $p\mid mn$, then there is a point of order $p$ over a quintic field, so $p\in S(5)$, the set of
primes $\leq 19$ by \cite[Theorem 1.2]{DKSS}. The set $\Phi^\infty(5)$ of \eqref{eq:phiinf5} is
closed under passing to subgroups, so every $G\notin\Phi^\infty(5)$ satisfying (2) and (3)
contains a member of $\mathcal G$, which is the last assertion. We \gitlink{group_list.py}{compute} the explicit list by a finite enumeration, and it agrees with the list in \cite[\S 7]{DN}.
\end{proof}

Let $G=\tg{m}{n}\in\mathcal G$. There exists an elliptic curve $E$ over a quintic field $K$
with $G\hookrightarrow E(K)$ if and only if $X_1(m,n)$ has a non-cuspidal closed point of degree
$5$. 
Such a pair $(E,K)$ gives a non-cuspidal point of $X_1(m,n)$ whose residue field
lies in $K$, hence has degree $1$ or $5$, and degree $1$ is excluded by
Mazur's theorem \cite{mazur} since $G\notin\Phi(1)$. Conversely a non-cuspidal closed point of
degree $5$ is represented by such a pair, because $X_1(m,n)$ is a fine moduli space. This is a
statement about $G$ as a subgroup, so it remains to determine these points and the full torsion
groups of the elliptic curves they represent. The latter is carried out in
\Cref{sec:sporadic}.

Three of the members of $\mathcal G$ need not be treated separately.

\begin{lemma}\label{lem:4n}
For $9\leq n\leq 12$ there is a $\Q$-rational morphism
$X_1(4n)\to X_1(2,2n)$, $(E,P)\mapsto \bigl(E/\dm{2nP},\,Q,\,P\bmod \dm{2nP}\bigr)$, where $Q$
generates $E[2]/\dm{2nP}$. Hence, if $X_1(2,2n)$ has no non-cuspidal point of degree
dividing $5$, then neither does $X_1(4n)$, and $\torz{4n}$ is not a quintic torsion group.
\end{lemma}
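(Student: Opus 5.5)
We first check that the map is well defined on the moduli problem. Let $(E,P)$ correspond to a point of $Y_1(4n)$, so $P$ has exact order $4n$. Then $2nP$ has order $2$, and $\phi:E\to E':=E/\dm{2nP}$ is a $2$-isogeny. The image $\phi(P)$ has order $2n$, because $\ker\phi\cap\dm P=\dm{2nP}$.

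Next we produce the second factor. The group $\phi(E[2])=E[2]/\dm{2nP}$ has order $2$ and lies in $E'[2]$. Let $Q$ be its generator; we must show $Q\notin\dm{\phi(P)}$. The unique element of order $2$ in $\dm{\phi(P)}$ is $n\phi(P)=\phi(nP)$, where $nP$ has order $4$. If $\phi(nP)=\phi(T)$ for some $T\in E[2]$, then $nP-T\in\ker\phi\subseteq E[2]$. This forces $nP\in E[2]$, a contradiction. Hence $\dm{Q,\phi(P)}\simeq\tg{2}{2n}$, and $(E',Q,\phi(P))$ defines a point of $Y_1(2,2n)$.

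For $\Q$-rationality, note that the construction is Galois equivariant. If $\sigma\in\Gal(\Qbar/\Q)$, then $\sigma(2nP)=2n\sigma(P)$ and $\sigma(E[2])=\sigma(E)[2]$, and $Q$ is canonical because $E[2]/\dm{2nP}$ has a unique nonzero element. So the map is defined on the fine moduli functor (here $4n\geq36>4$ and $2\cdot 2n>4$) over $\Z[1/4n]$, hence over $\Q$ on $Y_1(4n)$. It then extends to the smooth projective models, sending cusps to cusps and non-cuspidal points to non-cuspidal points, since $E'$ is an elliptic curve whenever $E$ is. If one prefers to avoid the functorial formulation, the same morphism can be described as the map induced by an inclusion of congruence subgroups after conjugating by $\begin{pmatrix}2&0\\0&1\end{pmatrix}$. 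The modular interpretation is the cleaner route, and checking that $Q$ is canonical is the only point that needs care.

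For the consequence, a non-cuspidal closed point $x$ of $X_1(4n)$ of degree $5$ maps to a non-cuspidal closed point $y$ of $X_1(2,2n)$. Since $\Q(y)\subseteq\Q(x)$, we have $\deg y\mid 5$, which contradicts the hypothesis. We also rule out degree $1$ points of $X_1(4n)$: Mazur's theorem shows $\torz{4n}\notin\Phi(1)$, since $4n\geq36$, and in any case these are excluded by the same argument. Finally, as explained before the lemma, $\torz{4n}$ would be a quintic torsion group exactly when $X_1(4n)$ has a non-cuspidal point of degree $5$. Hence $\torz{4n}$ is not a quintic torsion group.
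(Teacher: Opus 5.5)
Your proof is correct and takes the same route as the paper. The paper cites \cite[\S 2]{DN} for the morphism and notes only that it sends non-cuspidal points to non-cuspidal points and does not increase degree. You additionally check the moduli-theoretic details that the paper leaves to that reference: that $\phi(P)$ has order $2n$, that $Q\notin\dm{\phi(P)}$, and that $Q$ is canonical, so the construction is functorial and defined over $\Q$.
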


\begin{proof}
The morphism is the one described in \cite[\S 2]{DN} (see also \cite[\S 2]{DS}). It maps
non-cuspidal points to non-cuspidal points and does not increase the degree of a point.
\end{proof}

For $n=10,11,12$ we will show in \Cref{sec:sieve,sec:direct} that $X_1(2,2n)$ has no
non-cuspidal quintic points, so \Cref{lem:4n} disposes of $\torz{40},\torz{44},\torz{48}$. For
$n=9$ the hypothesis of \Cref{lem:4n} fails, and $\torz{36}$ requires a separate argument,
given in \Cref{prop:36}.

The rest of the paper treats the $45$ groups of \Cref{lem:45} one family at a time;
\Cref{table:summary} shows where each of them is dealt with.

\begin{table}[H]
\centering
\small
\footnotesize
\begin{tabular}{@{}l@{\quad}l@{}}
\toprule
$G\in\mathcal G$ & treated in\\
\midrule
$\torz n$, $n=49,51,55,75$ & \Cref{prop:49_51_55_75} (from \cite[Proposition 7.3]{DN})\\
$\torz n$, $n=91,95,119,121,125,133,143,$ & \Cref{prop:from_DN}, from \cite[Rem.~6.2, Thm.~7.1]{DN}\\
\quad $169,187,209,221,247,289,323,361$ & \quad (see \Cref{rem:erratum})\\
$\torz n$, $n=27,35,39,50$ & \Cref{prop:pure}\\
$\torz n$, $n=26,32,33,34,38,42,45$; $\tg{2}{22}$ & \Cref{prop:sieve} (Hecke sieve)\\
$\torz n$, $n=77,85$ & \Cref{prop:77_85} (diamond argument at $2$)\\
$\torz n$, $n=57,63,65$ & \Cref{prop:57_63_65} (twisted Hecke sieve)\\
$\tg{2}{20}$, $\tg{2}{24}$ & \Cref{prop:2_20_2_24} (direct analysis)\\
$\torz n$, $n=40,44,48$ & \Cref{lem:4n}\\
$\torz{36}$ & \Cref{prop:36}\\
\midrule
$\torz{28}$, $\torz{30}$, $\tg{2}{18}$ & \emph{are} quintic torsion groups; \Cref{prop:28_30_218}\\
\bottomrule
\end{tabular}
\medskip
\caption{Treatment of the $45$ groups of \Cref{lem:45}.}
\label{table:summary}
\end{table}

\section{The groups eliminated in \texorpdfstring{\cite{DN}}{[DN]}}
\label{sec:literature}

Nineteen of the $45$ groups of \Cref{lem:45} were eliminated over quintic fields in
\cite[\S 7]{DN}, and three more follow from \Cref{lem:4n} once $\tg{2}{2n}$, $n=10,11,12$, is
disposed of. We recall the arguments, because one
of the hypotheses used there needs to be corrected, and because the numerical conditions have
to be checked with $d=5$ rather than $d=4$.

\subsection{The global method}
The main tool is \cite[Proposition 4.3]{DN}, due to Derickx: let $n$ be an integer, $q$ an odd
prime with $q\nmid n$, $H\subseteq(\torz n)^\times$ a subgroup containing $-1$, and write
$X_H:=X_1(n)/H$ and $J_H:=\Jac(X_H)$. Let $a\in(\torz n)^\times/H$ be such that
$(\dm a-1)J_H(\Q)$ is finite. Put
\[
k_q=\begin{cases} q+1 & \text{if $J_H(\Q)$ is finite},\\
                  2q+1 & \text{if $a\in\{q,q^{-1}\}$},\\
                  2(q+1) & \text{otherwise.}\end{cases}
\]
If $k_qd<\gon_\Q X_H$ then every $D\in X_H^{(d)}(\Q)$ whose support contains no cusp is a sum of
orbits under $\dm a$. The cited proposition also covers $q=2$, but only under an additional
hypothesis on the $2$-primary part of $(\dm a-1)J_H(\Q)$, that it be trivial or killed by $A_2$. We never use that case, and we come back to it in \Cref{sec:posrank}. Taking $H=\{\pm1\}$ and using Abramovich's bound, the inequality
$k_qd<\gon_\Q X_1(n)$ holds as soon as $d<b(n)$, where
\begin{equation}\label{eq:bn}
b(n):=\frac{325}{2^{15}}\cdot\frac{[\PSL_2(\Z):\overline{\Gamma_1(n)}]}{k_q}.
\end{equation}
The diamond operators act freely on non-CM points, so if in addition $\ord\dm a\nmid d$, then a
$\dm a$-invariant effective divisor of degree $d$ must contain a CM point in its support, which is impossible when $d<d_{\mathrm{CM}}X_1(n)$. We state the resulting criterion for $d=5$.

\begin{proposition}\label{prop:global}
Let $n$ be an integer with $3\nmid n$, and let $a\in(\torz n)^\times$ be such that
\begin{enumerate}
\item $\chi_f(a)=1$ for every newform $f$ of weight $2$ and level $n'$ dividing $n$ with
      $L(f,1)=0$, where $\chi_f$ denotes the nebentypus of $f$, evaluated at the image of $a$
      in $(\torz{n'})^\times$;
\item $b(n)>5$, where $b(n)$ is as in \eqref{eq:bn} with $q=3$ and with $k_3$ determined by
      $a$ as above;
\item $\ord\dm a\nmid 5$ in $\Delta(n)=(\torz n)^\times/\{\pm1\}$;
\item $d_{\mathrm{CM}}X_1(n)>5$.
\end{enumerate}
Then $X_1(n)$ has no non-cuspidal point of degree $5$. In particular, $\torz{n}$ is not a
quintic torsion group.
\end{proposition}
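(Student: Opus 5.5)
We argue by contradiction. Suppose $x\in X_1(n)$ is a non-cuspidal closed point of degree $5$, and let $D\in X_1(n)^{(5)}(\Q)$ be the effective divisor given by the sum of its Galois conjugates. Then $D$ is a rational effective divisor of degree $5$ whose support contains no cusp. The aim is to apply Derickx's criterion \cite[Proposition 4.3]{DN} with $H=\{\pm1\}$ (so $X_H=X_1(n)$ and $J_H=J_1(n)$), $q=3$ and the given $a$. This shows that $D$ is a sum of $\dm a$-orbits. We then derive a contradiction from the freeness of the diamond action on non-CM points.

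The first step is to verify the hypothesis that $(\dm a-1)J_1(n)(\Q)$ is finite. Decompose $J_1(n)$ up to isogeny into the abelian varieties $A_f$ attached to Galois orbits of newforms $f$ of level $n'\mid n$, with multiplicities coming from the degeneracy maps. The diamond operator $\dm a$ acts on the $f$-isotypic part through $\chi_f(a)$. For $f$ with $L(f,1)\neq0$, the theorem of Kolyvagin--Logachev and Kato gives that $A_f(\Q)$ is finite, so those factors contribute only torsion. For $f$ with $L(f,1)=0$, hypothesis (1) says $\chi_f(a)=1$, so $\dm a-1$ kills the corresponding factor. Since everything is defined up to isogeny, it follows that $(\dm a-1)J_1(n)(\Q)$ is finite. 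The subtle point here is the one flagged in \Cref{rem:erratum}: the condition must involve the vanishing of $L(f,1)$ for each $f$, not just the rank, and it must also account for the Galois conjugates of $f$. Since $L(f,1)=0$ is Galois-stable, and the characters of conjugate forms are conjugate, $\chi_f(a)=1$ passes to the whole orbit.

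Next, $q=3$ is odd and $3\nmid n$ by assumption, and $k_3$ is chosen according to $a$. Abramovich's bound gives
\[
\gon_\Q X_1(n)\geq\gon_\C X_1(n)\geq\tfrac{325}{2^{15}}[\PSL_2(\Z):\overline{\Gamma_1(n)}]=k_3\,b(n)>5k_3
\]
by hypothesis (2). So the inequality $k_3d<\gon_\Q X_H$ required for $d=5$ holds, and \cite[Proposition 4.3]{DN} applies: $D$ is invariant under $\dm a$, indeed a sum of $\dm a$-orbits.

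Finally, $\dm a$ permutes the five geometric points in the support of $D$. If none of them is CM, then $\langle\dm a\rangle$ acts freely on them: a fixed point would give an automorphism of $E$ sending $P$ to $\pm a^kP$ with $a^k\not\equiv\pm1$, which for $j\neq0,1728$ is impossible. Each orbit then has size $\ord\dm a$, so $\ord\dm a\mid 5$, contradicting (3). Hence some point of the support is CM. That point has degree $5$ over $\Q$, which contradicts (4). Therefore no such $x$ exists, and by the discussion after \Cref{lem:45} $\torz n$ is not a quintic torsion group.

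The main obstacle is the first step: correctly passing from hypothesis (1) on newforms of all levels $n'\mid n$, which may have nontrivial nebentypus, to the finiteness of $(\dm a-1)J_1(n)(\Q)$. This needs the isogeny decomposition to be compatible with the diamond action, together with the analytic-rank-zero theorem for each $A_f$.
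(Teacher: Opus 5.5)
Your argument is correct and is essentially the paper's proof. You deduce that $(\dm a-1)J_1(n)(\Q)$ is finite from the diamond-equivariant isogeny decomposition of $J_1(n)$, Kato's theorem and hypothesis (1), apply \cite[Proposition 4.3]{DN} with $q=3$, $H=\{\pm1\}$, $d=5$, and finish with the freeness of the diamond action on non-CM points and hypotheses (3) and (4).

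One small inaccuracy, which does not affect your argument: \Cref{rem:erratum} is about the admissible range of degrees, namely $d<b(n)$, i.e.\ $d\leq\lfloor b(n)\rfloor$ rather than $\lceil b(n)\rceil$. It says nothing about how the condition on $L(f,1)$ or on Galois orbits of newforms should be formulated.
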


\begin{proof}
We first show that condition (1) implies that $(\dm a-1)J_1(n)(\Q)$ is finite. The Jacobian $J_1(n)$ is isogenous to
$\bigoplus_{n'\mid n}\bigoplus_{[f]} A_{[f]}^{\sigma_0(n/n')}$, the inner sum being over the
Galois orbits of newforms of level $n'$, and this decomposition is equivariant for the diamond
operators, $\dm a$ acting on $A_{[f]}$ through $\chi_f(a)\in\End(A_{[f]})\otimes\Q$. For an $f$
with $L(f,1)\neq0$ the group $A_{[f]}(\Q)$ is finite by Kato's theorem
\cite[Corollary 14.3]{kato}, and for the remaining $f$ we have $\chi_f(a)=1$ by (1), so $\dm a-1$
annihilates $A_{[f]}$. Hence $(\dm a-1)J_1(n)(\Q)$ is contained in the image of a finite group
under an isogeny, and is finite. The rest is
\cite[Proposition 4.3]{DN} and the corollary following it, applied with $d=5$, $q=3$,
$H=\{\pm1\}$, whose hypotheses are (2), (3) and (4).
\end{proof}

\begin{proposition}\label{prop:from_DN}
The groups $\torz n$ for
\[
n\in\{91,\,95,\,119,\,121,\,125,\,133,\,143,\,169,\,187,\,209,\,221,\,247,\,289,\,323,\,361\}
\]
are not quintic torsion groups.
\end{proposition}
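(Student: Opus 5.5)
The plan is to apply \Cref{prop:global} to each of the fifteen levels, with one exception handled below. First I dispose of the levels where $J_1(n)(\Q)$ can be taken to be finite, or where the weaker condition (1) holds for a well-chosen $a$. All these $n$ are products of two primes in $\{7,11,13,17,19\}$ or squares/cubes of such primes, and none is divisible by $3$, so the hypothesis $3\nmid n$ holds.

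For each $n$, I would list the Galois orbits of newforms of weight $2$ at all levels $n'\mid n$ with $L(f,1)=0$. These are the analytic-rank-positive orbits, and they can be read off from modular symbols. I would then pick $a\in(\torz n)^\times$ lying in the intersection of the kernels of their nebentypes. For most of these levels the only such forms are the trivial-character forms, coming from elliptic curves of rank $\geq1$ such as those at levels $91$, $121$, $143$, $209$, \dots. So any $a$ in the kernel of all relevant characters works, for instance $a$ with $a\equiv 1$ modulo the conductor of each of those characters. Condition (3) then requires that the order of $a$ in $\Delta(n)$ is neither $1$ nor $5$. Since $|\Delta(n)|$ is large and the constraint from (1) cuts only a small subgroup, there is plenty of room; I would choose $a$ of order $2$ or $3$ when possible. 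The key caveat is that if some positive-rank orbit has a character of order $5$ or $10$, we must avoid landing only in a $5$-power order subgroup.

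Next I check the numerical condition (2). $[\PSL_2(\Z):\overline{\Gamma_1(n)}]=\frac{n^2}{2}\prod_{p\mid n}(1-p^{-2})$ is at least about $4000$ for $n=91$. Multiplying by $325/2^{15}\approx 0.0099$ gives about $40$. With $k_3\leq 8$ this gives $b(n)\geq 5$. However, the margin is thin for the smallest levels $91$ and $95$, where $k_3=8$ yields roughly $4.9$–$5.4$. In those cases I would try to arrange $a\in\{3,3^{-1}\}$ modulo $\pm1$, giving $k_3=7$, or show that $J_1(n)(\Q)$ is finite, giving $k_3=4$. For larger $n$, (2) is automatic. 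Condition (4) follows from Clark--Corn--Rice--Stankewicz, since every $n\geq22$ has $d_{\mathrm{CM}}X_1(n)>5$.

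The main obstacle is the small levels $n=91,95$ (and possibly $119$ and $121$), where both (2) and (1) are delicate. This is presumably the point at which \cite[Rem.~6.2]{DN} and \Cref{rem:erratum} enter. If the global criterion fails there, I would fall back on the alternative route for $n=91$, $119$, $133$, $143$, $\dots$: use the divisibility by a smaller level such as $7$, $13$, or $17$. For a prime $p\mid n$ with $p\in\{13,17,19\}$, a point of order $n$ forces a point of order $p$ over a quintic field. For $p=13,17,19$, the known quintic points of these orders are infinite families, so that step alone is not enough.

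Instead, I would combine it with \Cref{lem:badred} and \Cref{lem:additive} at a small prime such as $p=2$ or $3$. There are no elliptic curves over $\F_{p^{d'}}$, $d'\leq5$, with a point of order $n$, by the Hasse bound $n>(\sqrt{p^{d'}}+1)^2$ in the relevant range. So $E$ has multiplicative reduction above $p$, and the point reduces to a cusp. The Hecke-sieve argument with $A_q$ and \Cref{prop:tools} then eliminates these residual cuspidal configurations.
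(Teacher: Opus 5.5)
Your main line is the paper's proof: apply \Cref{prop:global} with $q=3$ at each level. However, the proposal leaves unverified the one point on which that argument hinges, and surrounds it with claims that are wrong. (A minor slip: $95=5\cdot19$ and $125=5^3$, so the prime $5$ also occurs.)

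Only $n=91$ is delicate. With $k_3=8$ one gets $b(91)=325\cdot4032/(2^{15}\cdot 8)\approx 4.9988<5$. By contrast, $n=95,119,121$ already give $b\approx 5.356$, $8.57$, $9.00$ with $k_3=8$, so they need no special choice of $a$. The option $k_3=4$ is unavailable at $91$, because $J_1(91)(\Q)$ is infinite: already $J_0(91)$ has rank-one elliptic curve quotients.

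So at $91$ you need $a\equiv\pm3^{\pm1}$. In particular an $a$ of order $2$ or $3$ in $\Delta(91)$, as you suggest, cannot work, since $\dm 3$ has order $6$ there. The proof at $91$ therefore rests on $3$ lying in the kernels of the characters of the positive-rank orbits 91.g and 91.u. This does hold: the paper takes $a=3$ and gets $b(91)\approx5.713$. But it must be checked against the LMFDB. It does not follow from your heuristic that most levels carry only trivial-character positive-rank forms, and that heuristic is false. The levels $91,95,133,143,187,209,247,323$ all carry positive-rank orbits with non-trivial character, and at $95,133,247$ the $a$ of \Cref{table:global} has order only $3$ in $\Delta(n)$. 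Also, \Cref{rem:erratum} plays no role here: the $\lceil b(n)\rceil$ versus $\lfloor b(n)\rfloor$ issue affects only $77$ and $85$.

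The fallback does not work as written. At $p=2$, \Cref{lem:badred,lem:additive} force the reduction of $x^{(5)}$ to be totally cuspidal. (At $p=3$ the Hasse bound only excludes $n\geq276$.) Then the non-cuspidal part $D$ is $0$, and the Hecke sieve has nothing to test. Moreover the sieve needs $p\nmid 2n$, and it rests on \Cref{prop:tools}(4), i.e.\ on $\rk J_1(n)(\Q)=0$, which fails at these levels.

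What closes the totally cuspidal case is the diamond argument of \Cref{prop:M3}. For admissible $a$, the class $(\dm a-1)[x^{(5)}-C]$ is torsion and reduces to $0$ modulo a prime above $2$. Parent's result then gives $2(\dm a-1)(x^{(5)}-C)=\ddiv(f)$ with $\deg f\leq 20$. All fifteen levels are odd, and Abramovich's bound with index at least $4032$ gives $\gon_\C X_1(n)>39>20$. So \Cref{prop:M3} at $p=2$, with the $a$ of \Cref{table:global}, would prove the whole proposition uniformly and avoid the $k_3$ issue at $91$. It still requires the same verification of condition (1), so that check cannot be skipped on either route.
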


\begin{proof}
We apply \Cref{prop:global} with $q=3$ and the values listed in \Cref{table:global}. Note
that $3\nmid n$ for all fifteen levels. The values of $d_{\mathrm{CM}}X_1(n)$ are taken from
the repository accompanying \cite{CGPS}, and all of them are $>5$. We \gitlink{global_table.py}{compute} the values of
$b(n)$ from \eqref{eq:bn}, and all of them are $>5$. We
compute the orders $\ord\dm a$ in $\Delta(n)$, and none of them divides $5$. We \gitlink{char_kernels_all.m}{check}
condition (1) using the LMFDB \cite{lmfdb}. For
$n\in\{119,121,125,169,221,289,361\}$ every newform of weight $2$ and level dividing $n$ with
positive analytic rank has trivial character, so (1) is automatic. For the remaining $n$ the
newforms of positive analytic rank and non-trivial character have character orbits
\lmfdcharorbit{91}{g}, \lmfdcharorbit{91}{u}, \lmfdcharorbit{95}{l},
\lmfdcharorbit{133}{f}, \lmfdcharorbit{133}{s}, \lmfdcharorbit{143}{h},
\lmfdcharorbit{187}{g}, \lmfdcharorbit{209}{f}, \lmfdcharorbit{247}{be} and
\lmfdcharorbit{323}{k}, and the element $a$ of \Cref{table:global} lies in the kernel of
all of them. All analytic ranks used are recorded in the LMFDB as proved.
\end{proof}

\begin{table}[H]
\centering
\begin{tabular}{rcccccc}
\toprule
$n$ & $a \bmod n$ & $\ord\dm a$ & $[\PSL_2(\Z):\overline{\Gamma_1(n)}]$ & $k_3$ & $b(n)$ & $d_{\mathrm{CM}}X_1(n)$\\
\midrule
$91$  & $3$   & $6$   & $4032$  & $7$ & $5.713$  & $24$\\
$95$  & $11$  & $3$   & $4320$  & $8$ & $5.356$  & $72$\\
$119$ & $3$   & $48$  & $6912$  & $7$ & $9.794$  & $96$\\
$121$ & $2$   & $55$  & $7260$  & $8$ & $9.001$  & $110$\\
$125$ & $3$   & $50$  & $7500$  & $7$ & $10.627$ & $50$\\
$133$ & $64$  & $3$   & $8640$  & $8$ & $10.712$ & $36$\\
$143$ & $67$  & $12$  & $10080$ & $8$ & $12.497$ & $120$\\
$169$ & $3$   & $39$  & $14196$ & $7$ & $20.114$ & $52$\\
$187$ & $122$ & $16$  & $17280$ & $8$ & $21.423$ & $160$\\
$209$ & $34$  & $18$  & $21600$ & $8$ & $26.779$ & $180$\\
$221$ & $3$   & $48$  & $24192$ & $7$ & $34.277$ & $96$\\
$247$ & $144$ & $3$   & $30240$ & $8$ & $37.491$ & $72$\\
$289$ & $3$   & $136$ & $41616$ & $7$ & $58.965$ & $136$\\
$323$ & $101$ & $18$  & $51840$ & $8$ & $64.270$ & $288$\\
$361$ & $3$   & $171$ & $64980$ & $7$ & $92.069$ & $114$\\
\bottomrule
\end{tabular}
\medskip
\caption{Data for \Cref{prop:global}: $b(n)$ is rounded to three decimal places, and
$\ord\dm a$ is computed in $\Delta(n)$. It agrees with the order recorded in \cite[Tables 2 and 3]{DN}, except for $n=289$, where \cite[Table 2]{DN} records the order $272$ of $3$ in $(\torz{289})^\times$. The elements $a$ are those of \cite[Tables 2 and 3]{DN}. All of them except $a=2$ at $n=121$ and $a=67$ at $n=143$ are powers of $3$ reduced modulo $n$.}
\label{table:global}
\end{table}

\begin{remark}\label{rem:erratum}
The cases $n\in\{91,121,143,169,187,221,289\}$ of \Cref{prop:from_DN} are the ones covered by
\cite[Remark 6.2]{DN}, and the cases $n\in\{95,119,125,133,209,247,323,361\}$ are
\cite[Theorem 7.1]{DN}. Both are stated there for all degrees $d\leq\lceil b(n)\rceil$. The
correct range is one smaller, $d\leq\lfloor b(n)\rfloor$.

The criterion behind both is \cite[Proposition 4.3]{DN}, whose hypothesis
\cite[(4.1)]{DN} reads $d<b(n)$. As no $b(n)$ occurring in \cite[Tables 2 and 3]{DN} is an
integer, this is the range $d\leq\lfloor b(n)\rfloor=\lceil b(n)\rceil-1$.

For $d=5$ this matters only when $\lceil b(n)\rceil=5$, which happens for $n=77$ and
$n=85$, where $b(77)=\tfrac{14625}{3584}=4.081\ldots$ and $b(85)=\tfrac{8775}{2048}
=4.285\ldots$ Thus the global method does not reach $\torz{77}$ and $\torz{85}$ over quintic fields, and we eliminate them in \Cref{prop:77_85} by a different argument. Every $b(n)$ in
\Cref{table:global} exceeds $5$, so \Cref{prop:from_DN} is unaffected.
\end{remark}

\subsection{Rank zero, cuspidal reduction at \texorpdfstring{$2$}{2}}
The remaining four groups from \cite[\S 7]{DN} are handled by \cite[Proposition 7.3]{DN}.

\begin{proposition}[{\cite[Proposition 7.3]{DN}}]\label{prop:49_51_55_75}
The groups $\torz n$ for $n\in\{49,51,55,75\}$ are not quintic torsion groups.
\end{proposition}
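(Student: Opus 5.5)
The approach is to recall the argument of \cite[Proposition 7.3]{DN} and check that it goes through for $d=5$. The method is the rank zero criterion of \cite[Proposition 5.2]{DN}, combined with the fact that reduction at $2$ is forced to be cuspidal. For each $n\in\{49,51,55,75\}$ I would proceed in three steps.

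First, I would check that $\rk J_1(n)(\Q)=0$. Every newform of weight $2$ and level dividing $n$ should have $L(f,1)\neq0$, which can be read off from the LMFDB. Kato's theorem then gives finiteness of each factor $A_{[f]}(\Q)$, exactly as in the proof of \Cref{prop:global}.

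Second, I would show that any elliptic curve $E$ over a quintic field $K$ with a point of order $n$ has bad reduction at every prime above $2$. Since $2\nmid n$, \Cref{lem:additive} rules out additive reduction. To rule out good reduction I would apply \Cref{lem:badred} with $p=2$: I would check that no elliptic curve over $\F_{2^{d'}}$, $d'\leq5$, has a point of order $n$. This follows from the Hasse bound together with Waterhouse. For instance, $\#E(\F_{32})\leq 32+1+2\sqrt{32}<45$, so a point of order $49$, $51$, $55$ or $75$ is impossible for all $d'\leq5$. Hence $E$ has multiplicative reduction at each $\wp\mid2$, and the corresponding point $x\in X_1(n)^{(5)}(\Q)$ reduces modulo $2$ to a divisor supported entirely on cusps.

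Third, I would run the Hecke/$A_q$ argument with this information. Take a suitable odd $q\nmid n$ (say $q=3$) and the divisor $D=x-5c_0$. By \Cref{prop:tools}(4), $[A_q(x)]=0$, and by (3), $A_q(c_0)=0$. So $A_q(x)$, a divisor of degree $(q+1)\cdot5-q\cdot5-5=0$ in $\Z$-coefficients, is principal. If the degree-$5(q+2)$ positive part of $A_q(x)$ is less than $\gon_\Q X_1(n)$, the principal divisor must vanish identically, so $T_q(x)=q\dm q x+x$ as divisors. Since $x$ is non-cuspidal, this yields a contradiction: the moduli interpretation forces $x$ to be stable under the relevant structure, giving for example CM or diamond invariance that is incompatible with degree $5$. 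Alternatively, \cite[Proposition 5.2]{DN} packages this directly. Its hypothesis is that the reduction mod $2$ of $x$ lies in the finite set of cuspidal effective degree-$5$ divisors over $\F_2$, and that for each such cuspidal divisor $C$ the class of $A_q(C)$ is nonzero, or rather the appropriate injectivity mod $2$ argument holds. The injectivity comes from Parent's result, since $2\nmid n$ and $2$ is unramified in $\Q$, applied to the rational class $[x-C']$ for a rational lift $C'$. The $d=5$ check I would perform is that for every $\F_2$-rational cuspidal divisor $C$ of degree $5$ with $x\equiv C$, the class $[A_q(C)]$ is nonzero in $J_1(n)(\F_2)$ (or not $2$-torsion), contradicting $[A_q(x)]=0$.

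The main obstacle is this final finite computation. One must enumerate the degree-$5$ effective cuspidal divisors over $\F_2$, which are sums of Galois orbits of cusps, and verify the nonvanishing of $A_q$ on each of them. One must also handle the $2$-torsion ambiguity in Parent's proposition. I would try $q=3$ first and fall back to $q=5$ or $7$ if some cuspidal divisor survives.
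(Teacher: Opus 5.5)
Your first two steps are correct and match the paper. You have $\rk J_1(n)(\Q)=0$; the paper cites \cite[Theorem 3.1(2)]{DEHMZ}, which rests on the same Kato argument. The Hasse bound together with \Cref{lem:badred,lem:additive} then forces multiplicative reduction at every prime above $2$, so $x^{(5)}$ has totally cuspidal reduction modulo $2$.

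\textbf{The gap is in the third step: $A_q$ cannot detect a totally cuspidal reduction.} By \Cref{prop:tools}(3), $A_q(c_0)=0$ as a divisor, so for $C=5\overline{c_0}$ you get $[A_q(C)]=0$ in $J_1(n)(\F_2)$. More generally, the argument of \Cref{lem:CC}(1) gives $[A_q(C)]=0$ for every cuspidal $C$ that lifts to a cuspidal divisor over a subfield of $\Q(\zeta_n)$ in which $2$ and $q$ split completely. So your finite check ``$[A_q(C)]\neq0$ (or not $2$-torsion) for every $\F_2$-rational cuspidal $C$ of degree $5$'' fails, and $[A_q(x^{(5)})]=0$ is perfectly consistent with cuspidal reduction. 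This is why the paper always disposes of the totally cuspidal reduction separately before running a Hecke sieve.

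Your other variant, forcing $A_q(x^{(5)})=0$ as a divisor, is the global method, and it also fails here. The polar part is $T_q(x^{(5)})$, of degree $5(q+1)$, not $5(q+2)$. So it needs $\gon X_1(n)>5(q+1)\geq 20$, whereas Abramovich only gives $12,12,15,24$. Moreover, $q=3$ is not even allowed for $n=51,75$. Finally, as restated in \Cref{prop:main}, \cite[Proposition 5.2]{DN} involves no Hecke operator at all.

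\textbf{The missing idea is to use the class $[x^{(5)}-C']$ itself rather than its image under $A_q$.} This is \cite[Proposition 5.2(b)(i)]{DN}, which the paper applies with $p=2$, $d=5$.
\begin{enumerate}
\item Fix a prime $\wp\mid 2$ of $\Q(\zeta_n)$. Lift the cuspidal reduction of $x^{(5)}$ modulo $\wp$ to an effective cuspidal divisor $C'$ of degree $5$ over $\Q(\zeta_n)$. You do not need a $\Q$-rational lift, and $2$ is unramified in $\Q(\zeta_n)$ because $n$ is odd.
\item The class $[x^{(5)}-C']=[x^{(5)}-5c_0]+[5c_0-C']$ is torsion, by rank $0$ and Manin--Drinfeld. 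It reduces to $0$ modulo $\wp$.
\item Parent's result then gives $2(x^{(5)}-C')=\ddiv(f)$ for some $f\in\Q(\zeta_n)(X_1(n))$.
\item If $f$ is non-constant, its polar divisor is $2C'$, since the supports are disjoint. So $\deg f=10$, contradicting $\gon_\C X_1(n)\geq 12,12,15,24$.
\item If $f$ is constant, then $2x^{(5)}=2C'$, which is impossible because $x^{(5)}$ is non-cuspidal.
\end{enumerate}
So beyond your first two steps the only input is the inequality $\gon X_1(n)>2\cdot 5$, and no computation in $J_1(n)(\F_2)$ is required.
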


\begin{proof}
We have $\rk J_1(n)(\Q)=0$ by \cite[Theorem 3.1(2)]{DEHMZ}, no elliptic curve over
$\F_{2^k}$ with $k\leq 5$ has a point of order $n$ by the Hasse bound, and Abramovich's bound
gives $\gon_\Q X_1(n)\geq 12,12,15,24$ respectively, all $>10=2\cdot 5$. Now apply
\cite[Lemma 5.3 and Proposition 5.2\,(b)(i)]{DN} with $p=2$ and $d=5$.
\end{proof}

\Cref{prop:from_DN,prop:49_51_55_75} together eliminate the nineteen groups $\torz n$ with
\[
n\in\{49,51,55,75,91,95,119,121,125,133,143,169,187,209,221,247,289,323,361\}.
\]
Once we have shown in \Cref{sec:sieve,sec:direct} that $X_1(2,2n)$ has no non-cuspidal quintic
point for $n=10,11,12$, \Cref{lem:4n} adds $\torz{40},\torz{44},\torz{48}$, and
\Cref{prop:36} will add $\torz{36}$, which accounts for the $23$ groups of \cite[\S 7]{DN}. The
remaining $22$ groups are the subject of \Cref{sec:rank0,sec:sieve,sec:posrank,sec:direct}.

\section{The rank zero criterion}\label{sec:rank0}

We recall \cite[Proposition 5.2]{DN} in the form in which we use it.

\begin{proposition}[{\cite[Proposition 5.2]{DN}}]\label{prop:main}
Let $m\mid n$ with $mn>4$, let $p\nmid mn$ be a prime and let $d\geq 1$. Assume
$\rk J_1(m,n)(\Q(\zeta_m))=0$ and that one of the following holds:
\begin{enumerate}
\item[\textup{(a)}] $p>2$ and $\gon_{\Q(\zeta_m)}X_1(m,n)>d$;
\item[\textup{(b.i)}] $p=2$ and $\gon_{\Q(\zeta_m)}X_1(m,n)>2d$;
\item[\textup{(b.ii)}] $p=2$, $\gon_{\Q(\zeta_m)}X_1(m,n)>d$, and there are a field $L$ with
$\Q(\zeta_m)\subseteq L\subseteq\Q(\zeta_n)$ and a prime $\wp_L$ of $L$ above $2$ such that
every cuspidal $\F_{\wp_L}$-rational divisor on $X_1(m,n)$ lifts to an $L$-rational cuspidal
divisor and reduction modulo $\wp_L$ is injective on $J_1(m,n)(L)[2]$.
\end{enumerate}
Then every elliptic curve $E$ over a degree $d$ extension $K$ of $\Q(\zeta_m)$ with
$\tg{m}{n}\subseteq E(K)$ has good reduction at at least one prime of $K$ above $p$.
\end{proposition}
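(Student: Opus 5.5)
We argue by contradiction. Suppose $E/K$ has $\tg{m}{n}\subseteq E(K)$, with $[K:\Q(\zeta_m)]=d$, and bad reduction at every prime of $K$ above $p$. By \Cref{lem:additive} (note $p\nmid mn$) the reduction at each such prime $\wp$ is multiplicative. So the point $x=(E,P,Q)\in X_1(m,n)(K)$ reduces modulo every $\wp\mid p$ to a cusp. Let $D$ be the $\Q(\zeta_m)$-rational effective divisor of degree $d$ formed by the Galois conjugates of $x$, namely the image of $x$ in $X^{(d)}(\Q(\zeta_m))$. Then $D$ is non-cuspidal, while its reduction $\overline D$ at a prime $\mathfrak p$ of $\Q(\zeta_m)$ above $p$ is supported on cusps. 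Because $p\nmid mn$, the cusps reduce to distinct points and $X$ has good reduction at $\mathfrak p$.

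We next produce a cuspidal divisor $C$ with $[D-C]$ in the kernel of reduction, so that $[D-C]=0$. In case (a) and (b.i) I would use the following fact: every $\F_{\mathfrak p}$-rational cuspidal divisor of degree $d$ is the reduction of a $\Q(\zeta_m)$-rational cuspidal divisor $C$. This holds because the Galois action on cusps is through $\Gal(\Q(\zeta_n)/\Q(\zeta_m))$ acting on $\mathbb G_m$, and the decomposition group at $\mathfrak p$ is generated by Frobenius. At worst, one passes to the field $L$ fixed by Frobenius, and that is exactly where (b.ii) is formulated. Granting this, $[D-C]\in J(\Q(\zeta_m))$ is torsion because the rank is $0$, and it reduces to $0$.
\begin{itemize}
\item For $p>2$, Katz's injectivity of reduction on torsion gives $[D-C]=0$.
\item For $p=2$ in case (b.i), Parent's result only gives that $[D-C]$ is $2$-torsion. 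Then $2D\sim 2C$, and we work with degree $2d$ instead.
\item For $p=2$ in case (b.ii), we lift $C$ over $L$. Injectivity on $J(L)[2]$, combined with Parent's result (the kernel has exponent $2$), forces the kernel to be trivial, so $[D-C]=0$ over $L$.
\end{itemize}

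To conclude, suppose $D\sim C$ (respectively $2D\sim 2C$) with $D\neq C$, since $D$ is non-cuspidal and $C$ is cuspidal. Then the linear system is nontrivial, giving a non-constant function of degree $\le d$ (respectively $\le 2d$) over $\Q(\zeta_m)$, or over $L$ in case (b.ii). This contradicts the gonality hypothesis.

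In case (b.ii) one must check that the gonality over $L$ is still controlled. I expect to argue via the $\Q(\zeta_m)$-rationality of $D$: lifts of a Galois-stable cuspidal reduction can be chosen Galois-stable, so $C$ is $\Q(\zeta_m)$-rational after all.

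The main obstacle is the lifting step. One must show that the cuspidal reduction $\overline D$ lifts to a cuspidal divisor $C$ over the base field, with the multiplicity bookkeeping matching, given that cusps of width dividing $n$ may merge orbits under Frobenius. I would first try to show directly that Frobenius at $\mathfrak p$ acts on the cusps of $X_{\F_{\mathfrak p}}$ compatibly with $\Gal(\Q(\zeta_n)/\Q(\zeta_m))$. From this, any Frobenius-stable cuspidal divisor is a sum of decomposition-group orbits, and we then sum the full Galois orbits over $\Q(\zeta_m)$. The degree mismatch this summing creates is precisely what the field $L$ in (b.ii) is designed to avoid.
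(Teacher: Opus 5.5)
There is a genuine gap at the lifting step, and neither of the ways you propose to get around it works.

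The ``fact'' that every $\F_{\mathfrak p}$-rational cuspidal divisor is the reduction of a $\Q(\zeta_m)$-rational cuspidal divisor is false in general. A Galois orbit of cusps over $\Q(\zeta_m)$ can split modulo $\mathfrak p$, and its pieces are then $\F_{\mathfrak p}$-rational without coming from any $\Q(\zeta_m)$-rational cuspidal divisor. The paper exhibits this in \Cref{rem:basecusp}: on $X_1(30)$ modulo $7$, eight of the sixteen $\F_7$-rational cusps are not reductions of $\Q$-rational ones. Nothing prevents $\overline D$ from being supported on such cusps.

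Since $p\nmid n$, the cuspidal lift of $\overline D$ is unique. But $\overline D$ is only Frobenius-stable, so that lift is stable only under the decomposition group, i.e.\ rational over the decomposition field. Hence your claim that ``lifts of a Galois-stable cuspidal reduction can be chosen Galois-stable, so $C$ is $\Q(\zeta_m)$-rational after all'' is also false. Summing full Galois orbits, as in your last paragraph, produces a divisor that no longer reduces to $\overline D$. (If your claim were true, the lifting clause in (b.ii) would hold automatically with $L=\Q(\zeta_m)$.) As a result, two steps break:
\begin{enumerate}
\item $[D-C]$ need not lie in $J(\Q(\zeta_m))$, so rank $0$ alone does not make it torsion.
\item $D\sim C$ only produces a function over a larger field, whereas the gonality hypothesis is over $\Q(\zeta_m)$.
\end{enumerate}

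Both are repaired by two ingredients you do not use.

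\emph{Lift over the bigger field and use Manin--Drinfeld.} Reduce modulo a prime $\wp$ of $\Q(\zeta_n)$ above $\mathfrak p$ (resp.\ modulo $\wp_L$ in case (b.ii)). There every cusp is rational, so the cuspidal lift $C$ exists over $\Q(\zeta_n)$; over $L$ this is exactly what (b.ii) assumes. Write $[D-C]=[D-dc_0]+[dc_0-C]$ with $c_0$ a $\Q(\zeta_m)$-rational cusp. The first term is torsion because $\rk J(\Q(\zeta_m))=0$, and the second is torsion by Manin--Drinfeld. This is how the paper runs the same argument in the proof of \Cref{prop:M3}. Your three bullet points then go through: $p$ is unramified in $\Q(\zeta_n)$ and $L$ since $p\nmid n$, so Katz, Parent, or Parent together with injectivity on $J(L)[2]$ give $D\sim C$ or $2D\sim 2C$ over $\Q(\zeta_n)$ or $L$.

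\emph{Descend the linear system.} $D$ is $\Q(\zeta_m)$-rational, and the dimension of a Riemann--Roch space does not change under extension of the base field. So $D\neq C$ gives $\dim_{\Q(\zeta_m)}L(D)\geq 2$ (resp.\ $\dim_{\Q(\zeta_m)}L(2D)\geq 2$). A non-constant element of that space is a function over $\Q(\zeta_m)$ of degree at most $d$ (resp.\ $2d$), contradicting the gonality hypothesis. In \Cref{prop:M3} the paper sidesteps this descent by assuming a bound on $\gon_\C$ instead.
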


Combining \Cref{prop:main} with \Cref{lem:badred} eliminates four groups outright.

\begin{proposition}\label{prop:pure}
The groups $\torz{27}$, $\torz{35}$, $\torz{39}$ and $\torz{50}$ are not quintic torsion
groups.
\end{proposition}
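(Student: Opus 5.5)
For each $n\in\{27,35,39,50\}$ the plan is to combine \Cref{prop:main} with \Cref{lem:badred} at a single prime $p\nmid n$, here with $m=1$ and $d=5$. Suppose $E/K$ is an elliptic curve over a quintic field with $\torz n\subseteq E(K)$. \Cref{prop:main} will say that $E$ has good reduction at some prime of $K$ above $p$. \Cref{lem:badred} will say that, for a suitable $p$, no such prime of good reduction can exist. Together these give a contradiction. The argument uses the curve $E$ and so also covers CM points, and no separate CM argument is needed.

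The inputs, level by level, are as follows. First, $\rk J_1(n)(\Q)=0$ for $n=27,35,39,50$; I would cite \cite[Theorem 3.1]{DEHMZ}, or alternatively check with the LMFDB that every weight-$2$ newform of level dividing $n$ has $L(f,1)\neq0$ and then apply Kato's theorem. Second, a gonality bound. Option (a) needs $\gon_\Q X_1(n)>5$, and option (b.i) with $p=2$ needs $\gon_\Q X_1(n)>10$. The known $\Q$-gonalities (Derickx--van Hoeij) are $6,8,8,8$ for $n=27,35,39,50$, so (a) holds for every level. Abramovich's bound alone is too weak for such small levels, so these tabulated values are essential; since $\rk J=0$, they also match the fact that \eqref{eq:phiinf5} excludes these $n$. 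Third, because (b.i) is not available, I choose an odd prime $p\nmid n$. For \Cref{lem:badred} I need that no elliptic curve over $\F_{p^{d'}}$ with $d'\le5$ has a point of order $n$.

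The main obstacle is choosing $p$, because \Cref{lem:badred} must hold for every $d'\le 5$, including $d'=5$, where $\F_{p^5}$ is large. By the Hasse bound $\#E'(\F_{p^{d'}})\le(p^{d'/2}+1)^2$, which already exceeds $n$ once $p^{d'}\ge 27$ or so. So small primes are needed, and the constraint has to come from the possible orders. The group orders $p^{d'}+1-t$ with allowed $t$ from Waterhouse must avoid the multiples of $n$, and the structure $E'(\F_{p^{d'}})$ must avoid cyclic subgroups of order $n$. For $d'=5$ and $p=3$, the interval $[243+1-31,\,243+1+31]$ contains multiples of $27$, $35$, $39$ and $50$, so the pure Hasse argument fails. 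I would instead run the enumeration in \texttt{torsion\_over\_Fq.m} for $p=3$ (when $3\nmid n$) and $p=5,7$ (when coprime to $n$), and keep a prime that works for each level.

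If no odd prime works for some level, the fallback is $p=2$ with (b.ii). I would take $L=\Q$, which requires all $\F_2$-rational cuspidal divisors to lift to $\Q$-rational ones; this holds when $2$ generates $(\torz n)^\times/\{\pm1\}$, as it does for $n=27$ and for $n=35,39$ up to checking. I would also need injectivity of reduction on $J_1(n)(\Q)[2]$, which I would verify by computing the rational torsion, using that it is generated by cusps in these rank-zero cases. Parent's result controls the $2$-part. I expect this verification for each level to be the real work.
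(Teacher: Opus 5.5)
\textbf{Verdict: there is a genuine gap.} Your overall strategy, playing \Cref{prop:main} against \Cref{lem:badred}, is the paper's. The gap comes from wrong gonality data.

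\textbf{The gonality values.} Derickx--van Hoeij give $\gon_\Q X_1(35)=12$ and $\gon_\Q X_1(39)=14$, not $8$. Also $\gon_\Q X_1(50)=8$ is impossible: Abramovich's bound with $[\PSL_2(\Z):\overline{\Gamma_1(50)}]=900$ already gives $\gon_\C X_1(50)>8.9$. So Abramovich is not too weak here; it alone yields (a) for $n=50$.

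\textbf{No odd prime works for $n=27,35,39$.} Having discarded (b.i), your main plan needs an odd prime satisfying \Cref{lem:badred}, and for these three levels none exists. For example:
\begin{itemize}
\item over $\F_{25}$ there is a curve with $E(\F_{25})\simeq\torz{27}$ (trace $t=-1$), and over $\F_{49}$ one with cyclic group of order $54$;
\item over $\F_{27}$ there is one with $35$ points ($t=-7$);
\item over $\F_{49}$ there is one with $39$ points ($t=11$), and over $\F_{125}$ one with $117$ points.
\end{itemize}
For $p\geq 11$ the Hasse interval $[(p-1)^2,(p+1)^2]$ over $\F_{p^2}$ always contains a multiple of $n$ with a trace prime to $p$, and the group can then be taken cyclic. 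So for $n=27,35,39$ you are forced to $p=2$. There \Cref{lem:badred} does hold: the only multiple of $n$ in any Hasse interval over $\F_{2^k}$, $k\leq5$, is $n$ itself over $\F_{32}$, and it would need an even trace $t\neq 0,\pm8$. Your main plan succeeds only for $n=50$, with $p=3$ and (a), which is what the paper does.

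\textbf{The fallback (b.ii) with $L=\Q$.} For $n=27$ this is exactly the paper's argument. However, your lifting criterion ``$2$ generates $(\torz n)^\times/\{\pm1\}$'' is not sufficient.
\begin{itemize}
\item The cusps indexed by $d\mid n$ with $d>2$ are defined over $\Q(\zeta_{n/d})$ itself, not a real subfield. So you need $2$ to generate $(\torz{n/d})^\times$ for every such $d$.
\item For $n=35$ the two Galois orbits of cusps of degree $6$ have field $\Q(\zeta_7)$. Since $2$ has order $3$ modulo $7$, each splits modulo $2$ into two places of degree $3$, and these do not lift to $\Q$-rational cuspidal divisors. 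So (b.ii) with $L=\Q$ fails for $n=35$.
\item Switching to $L=\Q(\sqrt{-7})$ would require injectivity of reduction on $J_1(35)(\Q(\sqrt{-7}))[2]$, which you do not address.
\item For $n=39$ the lifting does hold, but injectivity on $J_1(39)(\Q)[2]$ remains an unverified computation.
\end{itemize}

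\textbf{The fix.} The missing ingredient is simply the correct gonalities. Since $12$ and $14$ both exceed $10=2\cdot5$, \Cref{prop:main}(b.i) applies at $p=2$ for $n=35,39$, with no condition on cusps or on $2$-torsion. This is what the paper does.
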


\begin{proof}
Let $n\in\{27,35,39,50\}$, and suppose for contradiction that $E$ is an elliptic curve over a
quintic field $K$ with a point of order $n$. In all four cases $\rk J_1(n)(\Q)=0$ by
\cite[Theorem 3.1(2)]{DEHMZ}.

Take $p=2$ for $n=27,35,39$ and $p=3$ for $n=50$. By enumerating all elliptic curves over
$\F_{p^k}$ for $k\leq 5$ we \gitlink{torsion_over_Fq.m}{find} that none of them has a point of order $n$,
so by \Cref{lem:badred} the curve $E$ has good reduction at
no prime of $K$ above $p$. Hence it is enough to verify the hypotheses of \Cref{prop:main}
with $d=5$, as its conclusion is that $E$ has good reduction at some prime of $K$ above $p$.

For $n=35$ and $n=39$ we have $\gon_\Q X_1(35)=12$ and $\gon_\Q X_1(39)=14$ by
\cite[Theorem 4 and Table 1]{DvH}, both $>10=2\cdot 5$, so \Cref{prop:main}(b.i) applies.

For $n=50$, Abramovich's bound with $[\PSL_2(\Z):\overline{\Gamma_1(50)}]=900$ gives
$\gon_\Q X_1(50)\geq \gon_\C X_1(50)>8.9$, so $\gon_\Q X_1(50)>5$ and \Cref{prop:main}(a)
applies.

For $n=27$ we use \Cref{prop:main}(b.ii) with $L=\Q$. All cusps of $X_1(27)$ are defined over
subfields of $\Q(\zeta_{27})$, in which $2$ is totally inert ($2$ is a primitive root modulo
$27$), so every $\F_2$-rational cuspidal divisor is the reduction of a $\Q$-rational one.
Finally $J_1(27)(\Q)=\ClCusp_\Q X_1(27)\simeq \torz3\times\torz3\times\torz{52497}$ by
\cite[Corollary 4.14 and Table 2]{DEHMZ} has no $2$-torsion, so reduction is trivially injective
on $J_1(27)(\Q)[2]$. As $\gon_\Q X_1(27)=6>5$ by \cite[Theorem 4 and Table 1]{DvH},
\Cref{prop:main}(b.ii) applies.
\end{proof}

\begin{remark}
Note that some of the enumerations in \Cref{prop:pure} can be avoided by using \cite[Theorem 4.1]{waterhouse} instead.
\end{remark}

\section{The Hecke sieve in degree five}\label{sec:sieve}

Let $X=X_1(m,n)$ with $m\in\{1,2\}$ and $\rk J(\Q)=0$, let $p\nmid 2n$ be a prime, let $K$ be a
quintic field and let $x\in Y_1(m,n)(K)$ be a non-cuspidal point, corresponding to an elliptic
curve $E/K$ together with a level structure $\tg{m}{n}\hookrightarrow E(K)$, so that
$x^{(5)}:=\sum_{\sigma}x^\sigma\in X^{(5)}(\Q)$. Reducing modulo $p$ we may write
\[
x^{(5)}_{\F_p}=D+C,\qquad D\in Y_1(m,n)^{(d')}(\F_p),\quad C \text{ cuspidal of degree }5-d',
\]
where $d'\geq 1$ if and only if $E$ has good reduction at some prime of $K$ above $p$. We call
the multiset of degrees of the closed points occurring in $D$, a partition of $d'$, the
\emph{type} of the decomposition, and write it in brackets. Thus $[5]$ means that $D$ is a
single closed point of degree $5$, and $[3,2]$ that $D$ is the sum of one of degree $3$ and one
of degree $2$. Any type with $d'=5$ leaves no cuspidal complement. 

The Hecke sieve \cite[Proposition 5.5]{DN} says that if $[A_q(D)]\neq 0$ and $[A_q(C)]=0$ in
$J(\F_p)$ for some prime $q\nmid 2pn$, then this decomposition cannot occur. Since $A_q$
commutes with the diamond operators and these are automorphisms of $X$, it suffices to test one
$D$ in each orbit of the diamond action.

The hypothesis $[A_q(C)]=0$ needs more care in degree $5$ than in degree $4$, because the
cuspidal complement $C$ can now have degree up to $4$. It is supplied by
\cite[Lemma 5.10]{DN}, which we restate below in the form in which our code checks it and in
the form needed for the twisted operators of \Cref{sec:posrank}. We first fix notation for the
cusps.

The aim is to lift a cuspidal divisor on $X_{\F_p}$ to a cuspidal divisor over a number field
in which both $p$ and $q$ split completely, so that \Cref{prop:tools}(2) applies.

By \cite[Lemma 2.8]{DEHMZ} the cuspidal subscheme $\mathcal C$ of $X_1(n)$ over $\Z[1/2n]$ is
$\bigsqcup_{d\mid n}\bigl(\mu'_{n/d}\times(\torz d)'\bigr)/[-1]$, where the primes select the
elements of maximal order in \emph{each} factor separately, so that the piece indexed by $d$ has
$\varphi(d)\varphi(n/d)/2$ geometric points. In particular, $\mathcal C$ is finite \'etale over
$\Z[1/2n]$. It follows that reduction modulo $p$ is a bijection
$\mathcal C(\overline{\Q}_p)\to\mathcal C(\overline{\F}_p)$, equivariant
for the surjection
$\Gal(\overline{\Q}_p/\Q_p)\twoheadrightarrow\Gal(\overline{\F}_p/\F_p)$. All cusps are defined
over $\Q(\zeta_n)$ and $p$ is unramified there, so the Galois action on $\mathcal C(\Qbar)$
factors through $(\torz n)^\times$, and the action of Frobenius on the reductions is that of
$D_p$, the subgroup generated by $p\bmod n$. It follows that the closed points of
$\mathcal C_{\F_p}$ correspond to the $D_p$-orbits on $\mathcal C(\Qbar)$.

Let $C_i$ be such a closed point, let $O_i$ be the corresponding $D_p$-orbit, and set
\[
H_i:=\{\sigma\in(\torz n)^\times:\sigma(O_i)=O_i\}\supseteq D_p,
\qquad L_i:=\Q(\zeta_n)^{H_i}.
\]
The divisor $c_i:=\sum_{P\in O_i}P$ is rational over $L_i$ and reduces to $C_i$. As
$\Q(\zeta_n)/\Q$ is abelian, so is $L_i/\Q$, with $\Gal(L_i/\Q)=(\torz n)^\times/H_i$ and
$\Frob_q=q\bmod H_i$. Hence $p$ splits completely in $L_i$, and $q$ splits completely in $L_i$
if and only if $q\bmod n\in H_i$.

\begin{lemma}\label{lem:CC}
Let $X=X_1(n)$ with $n>4$, and let $p\nmid 2n$ and $q\nmid 2pn$ be primes.
\begin{enumerate}
\item Let $L\subseteq\Q(\zeta_n)$ be a subfield in which both $p$ and $q$ split completely, and
let $C$ be a cuspidal divisor on $X_{\F_p}$ that is the reduction of a cuspidal divisor on
$X_L$. Then $[A_q\psi(C)]=0$ in $J(\F_p)$ for every $\psi\in\Z[\Delta(n)]$.
\item Let $C=\sum_i a_iC_i$ be a cuspidal divisor on $X_{\F_p}$, the $C_i$ being pairwise
distinct closed points. If $q\bmod n$ lies in $H_i$ for every $i$, then $[A_q\psi(C)]=0$ for
every $\psi\in\Z[\Delta(n)]$. This holds in particular whenever $q\equiv p^j\pmod n$ for some
$j\geq0$.
\end{enumerate}
The case $\psi=1$ of \textup{(2)} is \cite[Lemma 5.10]{DN}.
\end{lemma}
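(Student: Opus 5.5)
For (1), the plan is to lift $C$ to a cuspidal divisor $\tilde C$ on $X_L$ and show that $[A_q\psi(\tilde C)]$ is a class in $J(L)_\tors$ killed by $A_q$; reduction then gives the claim. First, $\deg\tilde C=\deg C$ need not be $0$, so we subtract a multiple of the rational cusp $c_0$ and set $\tilde C_0:=\tilde C-(\deg\tilde C)\,c_0$. By \Cref{prop:tools}(3), $A_q(c_0)=0$ as a divisor. Since $\dm a$ acts on $c_0$, we have $\psi(c_0)=(\deg\psi)\,c_0+(\text{cuspidal degree-}0\text{ part})$; more simply, write $A_q\psi(\tilde C)=\psi A_q(\tilde C_0)+(\deg\tilde C)\,\psi A_q(c_0)=\psi A_q(\tilde C_0)$, using that $A_q$ commutes with $\Z[\Delta(n)]$. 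Next, $\tilde C_0$ is an $L$-rational cuspidal divisor of degree $0$, so by Manin--Drinfeld its class lies in $J(L)_\tors$. As $q$ splits completely in $L$, \Cref{prop:tools}(2) gives $A_q[\tilde C_0]=0$, hence $[\psi A_q(\tilde C_0)]=0$ in $J(L)$. Finally, reduction modulo a prime of $L$ above $p$ (residue field $\F_p$, since $p$ splits completely) is a homomorphism $J(L)\to J(\F_p)$ that is compatible with $A_q$ and with the diamond operators. This holds because $q\nmid pn$, so the Hecke correspondence and the diamonds extend over $\Z[1/qn]$ localized at $p$, and because cusps reduce to cusps compatibly (the cuspidal scheme is finite étale). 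Therefore $[A_q\psi(C)]=\red[A_q\psi(\tilde C)]=0$.

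For (2) we reduce to (1) applied termwise. For each $i$, $C_i$ is the reduction of $c_i$, which is rational over $L_i$. We saw above that $p$ splits completely in $L_i$, and $q$ splits completely in $L_i$ exactly when $q\bmod n\in H_i$, which is the hypothesis. So (1) gives $[A_q\psi(C_i)]=0$ for each $i$, and linearity gives the claim for $C=\sum a_iC_i$. For the last sentence: if $q\equiv p^j$, then $q\bmod n\in D_p\subseteq H_i$ for all $i$.

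The main point needing care is the compatibility of reduction with $T_q$ on the full Jacobian, since $T_q$ is defined via the moduli correspondence. I would cite the fact that $X_1(n)$ and the correspondence $X_1(n;q)\rightrightarrows X_1(n)$ have good reduction at $p\nmid qn$. Then $T_q$, and hence $A_q$ together with $\dm q$, commutes with the specialization map on Picard groups, which is the map used in the Hecke sieve of \cite[Proposition 5.5]{DN}.
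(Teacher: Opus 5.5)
Your proposal is correct and follows essentially the paper's proof. You lift to an $L$-rational cuspidal divisor, normalize by $c_0$ so that Manin--Drinfeld puts the class in $J(L)_\tors$, kill it with $A_q$ using \Cref{prop:tools}(2),(3), and reduce modulo a prime above $p$; you then get (2) termwise with $L=L_i$ and $C=C_i$. The only difference is cosmetic: you subtract $(\deg\tilde C)\,c_0$ before applying $\psi$ and then use that $A_q$ commutes with $\Z[\Delta(n)]$, whereas the paper applies $\psi$ first and subtracts $e\,c_0$ with $e=\deg\psi(c)$.
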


\begin{proof}
(1) Let $c$ be a cuspidal divisor on $X_L$ reducing to $C$ and put $e:=\deg\psi(c)$. The
diamond operators are defined over $\Q$ and permute the cusps, so $\psi(c)$ is again a cuspidal
divisor rational over $L$, and $\psi(c)-ec_0$ is a cuspidal divisor of degree $0$ on $X_L$. Its
class lies in $J(L)_\tors$ by Manin--Drinfeld. As $q$ splits completely in $L$,
\Cref{prop:tools}(2) gives $[A_q(\psi(c)-ec_0)]=0$, and $A_q(c_0)=0$ by \Cref{prop:tools}(3),
so $[A_q\psi(c)]=0$ in $J(L)$. Reducing modulo a prime of $L$ above $p$, whose residue field is
$\F_p$ because $p$ splits completely in $L$, and using that $A_q$ and $\psi$ commute with
reduction, gives $[A_q\psi(C)]=0$.

(2) Both $\psi$ and $A_q$ are linear on divisors, so $[A_q\psi(C)]=\sum_i a_i\,[A_q\psi(C_i)]$ and it suffices to prove $[A_q\psi(C_i)]=0$ for all $i$. Fix $i$ and apply (1) with $L=L_i$ and $C=C_i$: the divisor $c_i$ is cuspidal, defined over $L_i$ and reduces to $C_i$, the prime $p$ splits completely in $L_i$ because $D_p\subseteq H_i$, and $q$ splits completely in $L_i$ because $q\bmod n\in H_i$ by hypothesis.
For the last assertion, $q\equiv p^j\pmod n$ means $q\bmod n\in D_p\subseteq H_i$ for every $i$.

Since $A_q$ commutes with $\Z[\Delta(n)]$ and $A_q(C)$ has degree $0$, we have
$[A_q\psi(C)]=\psi\bigl([A_q(C)]\bigr)$, so the case $\psi=1$ implies the general statement.
\end{proof}

\begin{remark}
Only the closed points that actually occur in a cuspidal complement matter, so in practice one
only has to impose $q\bmod n\in H_i$ for the closed cuspidal points $C_i$ of degree at most
$5-d'$. Moreover $H_i=(\torz n)^\times$ (so that $C_i$ imposes no condition on $q$) as soon as
the $D_p$-orbit $O_i$ is a full $(\torz n)^\times$-orbit, i.e.\ as soon as the corresponding
Galois orbit of cusps over $\Q$ stays irreducible modulo $p$. Our code compares the degrees of
the Galois orbits of cusps over $\Q$ with the degrees of their reductions to find the orbits
that split modulo $p$. If a cuspidal complement can contain a point coming from such an orbit,
it only uses $q$ with $q\bmod n\in D_p$, which suffices as $D_p\subseteq H_i$.
\end{remark}

We can now eliminate eight of the remaining groups.

\begin{proposition}\label{prop:sieve}
The groups $\torz n$ for $n\in\{26,32,33,34,38,42,45\}$ and $\tg{2}{22}$ are not quintic
torsion groups. Hence, by \Cref{lem:4n}, $\torz{44}$ is not a quintic torsion group
either.
\end{proposition}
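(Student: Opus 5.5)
For each of the eight curves $X=X_1(n)$, $n\in\{26,32,33,34,38,42,45\}$, and $X=X_1(2,22)$, we run the Hecke sieve of \Cref{sec:sieve}. The first step is to record that $\rk J(\Q)=0$ in every case. For $X_1(n)$ this is \cite[Theorem 3.1(2)]{DEHMZ}. For $X_1(2,22)$ we would check that every newform occurring in $J_1(2,22)$, which is isogenous to a quotient of $J_1(44)$ through the map of \Cref{lem:4n}, has $L(f,1)\neq0$, and then apply Kato's theorem as in the proof of \Cref{prop:global}.

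Next, for each level we fix a prime $p\nmid 2n$ small enough to make enumeration cheap, typically $p=3$ or $p=5$, or $p=7$ when $3\mid n$. Suppose $x\in Y_1(m,n)(K)$ is a non-cuspidal point over a quintic field $K$. Its reduction $x^{(5)}_{\F_p}=D+C$ has some type, i.e.\ a partition of some $d'\le 5$.

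\emph{Case $d'=0$.} This case, where $E$ has bad reduction at every prime above $p$, is excluded by \Cref{prop:main}(a) with $d=5$. For this we need $\gon_\Q X>5$, which we take from \cite[Table 1]{DvH} for $X_1(n)$, or from Abramovich's bound. For $X_1(2,22)$ we use its known gonality, which exceeds $5$.

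\emph{Case $d'\ge1$.} We enumerate the closed points of $Y_1(m,n)_{\F_p}$ of each degree $\le5$, up to the action of $\Delta(n)$. For each type and each choice of orbit representatives forming $D$, we compute $[A_q(D)]\in J(\F_p)$ for one or several auxiliary primes $q\nmid 2pn$. We choose each $q$ so that \Cref{lem:CC}(2) guarantees $[A_q(C)]=0$ for every possible cuspidal complement $C$ of degree $5-d'$. The safe choice is $q\equiv p^j\pmod n$; otherwise we take $q\bmod n\in H_i$ for all closed cuspidal points of degree $\le 5-d'$, following the remark after \Cref{lem:CC}. If $[A_q(D)]\neq0$, then \cite[Proposition 5.5]{DN} rules that decomposition out. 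Types of the form $[5]$ are handled in the same way, with $C=0$.

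Once every decomposition is eliminated, no non-cuspidal quintic point exists, so $\torz n$ and $\tg{2}{22}$ do not embed into $E(K)$. The statement about $\torz{44}$ then follows immediately from \Cref{lem:4n} with $n=11$. That lemma also needs the absence of rational non-cuspidal points on $X_1(2,22)$, which is Mazur/Kamienny territory: $\tg{2}{22}\notin\Phi(1)$.

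The main obstacle is computational and concerns the degree-$5$ places. For the larger levels, such as $n=42$ and $45$, and for $X_1(2,22)$, the genus is large, and there are many closed points of degree $5$ over $\F_p$ even modulo diamonds. Computing $T_q$ on each of them is expensive, and for some single places $[A_q(D)]$ may vanish for the first $q$ we try, forcing us to try several $q$ or to switch $p$. We would handle this with the fast $T_q$ routine of \Cref{sec:fast}, which works in the field of definition of each subgroup $G\subseteq E[q]$, together with orbit representatives obtained by isomorphism tests. We would record for each level a pair $(p,q)$ that kills all types, and if some residual $D$ survives every available $q$, we would combine sieves at two different primes $p$.
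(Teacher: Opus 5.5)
Your proposal is correct and follows essentially the same route as the paper. Both arguments use $\rk J(\Q)=0$, exclude the totally cuspidal reduction by \Cref{prop:main}(a) via $\gon_\Q X>5$, and then run a Hecke sieve over diamond-orbit representatives, with each $q$ chosen so that \Cref{lem:CC}(2) kills every possible cuspidal complement; the paper reports no survivors. The differences are minor: the paper cites \cite[Theorem 3.1(3)]{DEHMZ} for $\rk J_1(2,22)(\Q)=0$ and \cite[proof of Proposition 5.3]{DS} for $\gon X_1(2,22)>5$, and it observes that at $p=3$ only the type $[5]$ occurs on $X_1(2,22)$, so it never needs an analogue of \Cref{lem:CC} (which is stated only for $X_1(n)$) on that curve.
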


\begin{proof}
Let $X=X_1(m,n)$ be one of the eight curves and let $p$ be the prime in \Cref{table:sieve}. In every case $\rk J(\Q)=0$. For $m=1$ this is \cite[Theorem 3.1(2)]{DEHMZ}, since
$26,32,33,34,38,42,45$ lie in $S_0$ and not in the excluded set, and for $X_1(2,22)$ it is
\cite[Theorem 3.1(3)]{DEHMZ} (or \cite[Theorem 4.1]{DS}).

We first exclude the totally cuspidal reduction, i.e.\ $d'=0$, using \Cref{prop:main}(a) with
$d=5$. This requires $\gon_\Q X>5$, which holds in every case. By \cite[Theorem 4 and Table 1]{DvH}
we have $\gon_\Q X_1(n)=6,8,10,10,12$ for $n=26,32,33,34,38$. For $n=42$, Derickx and van
Hoeij proved $\gon_\Q X_1(42)>8$ \cite[proof of Proposition 6]{DvH}, and for $n=45$,
Abramovich's bound with index $864$ gives $\gon_\Q X_1(45)\geq 9$. For $X_1(2,22)$, Derickx
and Sutherland proved $\gon_{\F_3}X_1(2,22)>5$ \cite[proof of Proposition 5.3]{DS}, so
$\gon_\Q X_1(2,22)\geq\gon_{\F_3}X_1(2,22)>5$.

It remains to exclude every decomposition $x^{(5)}_{\F_p}=D+C$ with $\deg D=d'\geq 1$. The
types that can occur are limited by \Cref{lem:badred}: a non-cuspidal closed point of degree $e$
can occur only if some elliptic curve over $\F_{p^e}$ has the required level structure, and we
determine these degrees by an explicit enumeration. The number of non-cuspidal closed points of each degree,
and the number of their diamond orbits, are listed in \Cref{table:sieve}. For one
representative $D$ of each diamond orbit we \gitlink{hecke_sieve_deg5_v6.m}{compute} that $[A_q(D)]\neq0$ for one of the
admissible primes $q$. Since $A_q$ commutes with the
diamond operators, this covers every $D$. That $[A_q(C)]=0$ for every cuspidal complement $C$
that can occur follows from \Cref{lem:CC}\,(2) with $\psi=1$ for the seven curves $X_1(n)$,
and holds vacuously for $X_1(2,22)$, where the only type occurring is $[5]$ and there is no
cuspidal complement at all. There are no survivors in any type (see \Cref{table:sieve}).
\end{proof}

\begin{table}[H]
\centering
\small
\begin{tabular}{lrr@{\;}l@{\;}l@{\;}l}
\toprule
$X$ & $g$ & $p$ & non-cuspidal points of degree $1,\dots,5$ & orbits & $q$\\
\midrule
$X_1(26)$   & $10$ & $7$ & $0,42,124,603,3312$ & $0,8,22,103,552$ & $3,5,11$\\
$X_1(32)$   & $17$ & $3$ & $0,0,8,8,16$        & $0,0,1,1,2$      & $5,7,11,13$\\
$X_1(33)$   & $21$ & $7$ & $0,0,40,660,3644$   & $0,0,4,68,366$   & $5,13,19,37,43,67$\\
$X_1(34)$   & $21$ & $3$ & $0,0,0,26,32$       & $0,0,0,5,4$      & $5,7,11,13$\\
$X_1(38)$   & $28$ & $3$ & $0,0,3,0,72$        & $0,0,1,0,8$      & $5,7,11,13$\\
$X_1(42)$   & $25$ & $5$ & $0,0,4,183,648$     & $0,0,2,33,108$   & $11,13,17,19,23,29$\\
$X_1(45)$   & $41$ & $7$ & $0,24,120,348,3276$ & $0,2,10,30,273$  & $11,13,17,19$\\
$X_1(2,22)$ & $16$ & $3$ & $0,0,0,0,60$        & $0,0,0,0,12$     & $5,7,13$\\
\bottomrule
\end{tabular}
\medskip
\caption{The Hecke sieve computations of \Cref{prop:sieve}. The fourth column lists the
number of non-cuspidal closed points of $X_{\F_p}$ of degree $1,2,3,4,5$, and the fifth the
number of their orbits under the diamond action. Reducible types such as $[2,2]$ and $[3,2]$ are assembled from these and are processed as well. The sieve leaves no survivors in any type for any of the eight curves.}
\label{table:sieve}
\end{table}

\begin{remark}\label{rem:CC_used}
We spell out how \Cref{lem:CC} is applied for each curve.
\begin{itemize}
\item For $(n,p)=(26,7),(34,3),(38,3)$ the prime $p$ is a primitive root modulo $n$, so
$D_p=(\torz n)^\times$ and every $q\nmid 2pn$ is admissible.
\item For $(n,p)=(32,3)$ the subgroup of $(\torz{32})^\times$ generated by $3$ has order $8$
and contains $11$, so we ran the types $[3]$ and $[4]$, which have a cuspidal
complement, with $q=11$, and the type $[5]$ with $q\in\{5,7,11,13\}$.
\item For $X_1(2,22)$ at $p=3$ only the type $[5]$ occurs, so there is no cuspidal complement
and no condition on $q$.
\item For $X_1(33)$ at $p=7$ the Galois orbits of cusps over $\Q$ have degrees $1^{10},2^5,10^2$,
and modulo $7$ the five orbits of degree $2$ split into rational cusps while the two of degree
$10$ stay irreducible. The condition becomes $q\bmod 33\in D_7=\{1,4,7,10,13,16,19,25,28,31\}$, so we ran the
types $[3]$ and $[4]$ with $q\in\{13,19,37,43,67\}$ and the type $[5]$ with
$q\in\{5,13,19,37,43,67\}$.
\item For $X_1(42)$ at $p=5$ the orbits have degrees $1^{12},2^6,6^4$, and modulo $5$ two of
the four orbits of degree $6$ split into two places of degree $3$ each while everything else
stays irreducible. So complements of degree at most $2$ impose no condition on $q$, and
complements of degree $3$ or $4$ do not occur, because $Y_1(42)(\F_{5^e})=\emptyset$ for
$e\leq2$.
\item For $X_1(45)$ at $p=7$ the orbits have degrees $1^{12},2^4,4^3,6^2,8,12$, and modulo $7$ the
four orbits of degree $2$ split into rational cusps, the two of degree $6$ into places of degree
$3$ and the one of degree $8$ into places of degree $4$. A cuspidal complement containing one of
these points requires $q\bmod 45\in D_7=\{1,4,7,13,16,19,22,28,31,34,37,43\}$, that is
$q\equiv1\pmod3$. We imposed this condition on every cuspidal complement, so we ran the types
$[2],[3],[4],[2,2]$ with $q\in\{13,19\}$ and the types $[5],[3,2]$, which have no cuspidal
complement, with $q\in\{11,13,17,19\}$.
\end{itemize}
For $X_1(33)$, $X_1(42)$ and $X_1(45)$ we computed the condition from the model, checking also that the degrees of the reductions of the cusps agree with the prediction of \cite[Lemma 2.8]{DEHMZ}. For the other four curves $X_1(n)$ we imposed the coarser condition $q\bmod n\in D_p$ instead, which by \Cref{lem:CC} is always sufficient.
\end{remark}

\section{The cases of positive analytic rank}\label{sec:posrank}

We now treat the groups $\torz n$ for $n\in\{57,63,65,77,85\}$. For all five, $J_1(n)$ has a newform factor of positive analytic rank, so Kato's theorem does not give
$\rk J_1(n)(\Q)=0$ and neither \Cref{prop:main} nor the Hecke sieve of \Cref{sec:sieve} applies.
What we use instead is the finiteness of $(\dm a-1)J_1(n)(\Q)$ for suitable $a$. A diamond
argument applied to a totally cuspidal reduction eliminates $n=77$ and $n=85$, while for $n=57,63,65$ we combine that argument at an odd prime with a twisted Hecke sieve. Both
arguments are unconditional, and neither uses any information about $\rk J_1(n)(\Q)$.

Throughout this section we use the following consequence of Kato's theorem, already used in the
proof of \Cref{prop:global}: if $a\in(\torz n)^\times$ satisfies $\chi_f(a)=1$ for every
newform $f$ of weight $2$ and level dividing $n$ with $L(f,1)=0$, then $(\dm a-1)J_1(n)(\Q)$ is
finite. We call such an $a$ \emph{admissible for $n$}. \Cref{table:posrank} lists, for each
$n\in\{57,63,65,77,85\}$, the newforms of positive analytic rank, taken from the LMFDB
\cite{lmfdb} (all analytic ranks used are recorded there as proved).

\begin{table}[H]
\centering
\begin{tabular}{rlll}
\toprule
$n$ & newforms of positive analytic rank & admissible $a$ used & $\ord\dm a$\\
\midrule
$57$ & \lmfdbnewform{57}{2}{a}{a} (trivial character) & $2$, $4$ & $9$, $9$\\
$63$ & \lmfdbnewform{63}{2}{s}{a} ($\chi$ of order $6$) & $5$ & $3$\\
$65$ & \lmfdbnewform{65}{2}{a}{a} (trivial character) & $2$, $3$ & $6$, $12$\\
$77$ & \lmfdbnewform{77}{2}{a}{a} (trivial character) & $3$ & $30$\\
$85$ & \lmfdbnewform{85}{2}{a}{b}, \lmfdbnewform{85}{2}{j}{a} ($\chi$ of order $4$) & $2$ & $8$\\
\bottomrule
\end{tabular}
\medskip
\caption{The newforms of positive analytic rank. For $n=57,65,77$ every $a$ is admissible, because the only
newforms of positive analytic rank have trivial character. For $n=63$ the kernel of $\chi$ is
$\{1,5,25,38,58,62\}$, whose image in $\Delta(63)$ is the subgroup generated by $\dm 5$, of order $3$. For $n=85$ the kernel of $\chi$
contains $2$, which is the choice made in \cite[Table 2]{DN}.}
\label{table:posrank}
\end{table}

\begin{remark}\label{rem:whynotglobal}
The global method of \Cref{prop:global} does not reach these five cases. For every odd prime
$q\nmid n$ one gets $b(n)<5$, as soon as $J_1(n)(\Q)$ is infinite. Only $q=2$, and only for
$n=77$ and $85$, would give $b(n)>5$, but there \cite[Proposition 4.3]{DN} has the extra
assumption on the $2$-primary part of $(\dm a-1)J_1(n)(\Q)$ recorded in \Cref{sec:literature},
which we have not verified.
\end{remark}

\subsection{A diamond argument for the totally cuspidal reduction}

\begin{proposition}\label{prop:M3}
Let $n>4$ be an integer, let $p\nmid n$ be a prime, and let $a\in(\torz n)^\times$ be admissible
for $n$. Set $\varepsilon:=1$ if $p>2$, and $\varepsilon:=2$ if $p=2$ and $n$ is odd. Assume
\begin{enumerate}
\item $\ord\dm a\nmid 5$ in $\Delta(n)$;
\item $\gon_\C X_1(n)>10\varepsilon$;
\item $d_{\mathrm{CM}}X_1(n)>5$.
\end{enumerate}
Let $K$ be a quintic field and $E/K$ an elliptic curve with a point of order $n$, giving
$x\in Y_1(n)(K)$, and assume $[\Q(x):\Q]=5$. Then $E$ has good reduction at at least one prime
of $K$ above $p$, i.e. $x^{(5)}_{\F_p}$ has a non-cuspidal component.
\end{proposition}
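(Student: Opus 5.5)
We argue by contradiction: suppose $E$ has bad reduction at every prime of $K$ above $p$. By \Cref{lem:additive} the reduction is then multiplicative everywhere above $p$, so every conjugate $x^\sigma$ reduces to a cusp and $x^{(5)}_{\F_p}=C$ is cuspidal. The plan is to compare $x^{(5)}$ with its diamond translate $\dm a x^{(5)}$. Since $\dm a$ permutes the cusps, $\dm a C$ is again cuspidal of degree $5$. We lift $C$ and $\dm a C$ to cuspidal divisors over a field in which $p$ splits, as in the discussion before \Cref{lem:CC}: each closed cuspidal point $C_i$ lifts to $c_i$ over $L_i$, and $p$ splits completely in $L_i$.

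The key class is $z:=[x^{(5)}-\dm a x^{(5)}]=(1-\dm a)[x^{(5)}-5c_0]\in(\dm a-1)J_1(n)(\Q)$, which is torsion because $a$ is admissible. The class $w:=[c-\dm a c]$ of the lifted cuspidal divisor is torsion by Manin--Drinfeld. Both $z$ and $w$ lie in $J_1(n)(L)_\tors$ for a suitable $L$ unramified at $p$ (namely the compositum of the $L_i$, inside $\Q(\zeta_n)$), and they have the same reduction modulo a prime above $p$. For $p>2$, Katz's injectivity of reduction on $J(L)_\tors$ then gives $z=w$. For $p=2$ with $n$ odd, Parent's result only gives that $z-w$ is killed by $2$.

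We deduce a linear equivalence $x^{(5)}-\dm a x^{(5)}\sim c-\dm a c$ when $p>2$, and $2(x^{(5)}-\dm a x^{(5)})\sim 2(c-\dm a c)$ when $p=2$. Rearranged, $\varepsilon(x^{(5)}+\dm a c)\sim\varepsilon(\dm a x^{(5)}+c)$ are linearly equivalent effective divisors of degree $10\varepsilon$. These are equal over $L$ rather than $\Q$, but they are still equivalent over $\C$. If the two divisors were distinct, some function would have degree at most $10\varepsilon<\gon_\C X_1(n)$ by hypothesis (2), which is impossible. Hence the divisors are equal. Comparing the non-cuspidal parts, we get $x^{(5)}=\dm a x^{(5)}$, so $\dm a$ permutes the five conjugates of $x$.

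Now $\langle\dm a\rangle$ acts on the five geometric points of $x^{(5)}$. By hypothesis (3), $x$ is not a CM point, since its degree is $5<d_{\mathrm{CM}}X_1(n)$, and diamond operators act freely on non-CM points of $Y_1(n)$. Hence every orbit has size $\ord\dm a$, and $\ord\dm a$ divides $5$, contradicting (1). The main obstacle I expect is the bookkeeping in the lifting step: the field $L$ must be unramified at $p$, which holds since $p\nmid n$. We must also check that reduction at a prime above $p$ with residue field $\F_p$ identifies $z$ and $w$ correctly. Finally, in the $p=2$ case the factor $2$ has to be carried through correctly, which is exactly why $\varepsilon=2$ and $n$ odd are needed.
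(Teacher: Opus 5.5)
Your proof is correct and is essentially the paper's argument. Your class $z-w=(1-\dm a)[x^{(5)}-c]$ is, up to sign, the paper's $y=(\dm a-1)[x^{(5)}-C]$. Katz or Parent shows it is killed by $\varepsilon$, the gonality bound then forces $\dm a x^{(5)}=x^{(5)}$, and freeness of the diamond action on non-CM points contradicts $\ord\dm a\nmid 5$.

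The only difference is cosmetic: the paper lifts the cuspidal reduction to $\Q(\zeta_n)$, where every cusp is rational, instead of to the compositum of the $L_i$. Note that the discussion before \Cref{lem:CC} is phrased over $\Z[1/2n]$. However, for $n>4$ the cuspidal subscheme is already \'etale over $\Z[1/n]$, so your lifting also goes through at $p=2$.
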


\begin{proof}
Suppose the opposite. By \Cref{lem:additive} the reduction of $E$ at every prime of $K$ above $p$ is
multiplicative, so, fixing a prime $\wp$ of $\Q(\zeta_n)$ above $p$, the reduction of $x^{(5)}$
modulo $\wp$ is an effective divisor of degree $5$ supported on the cusps. As in the proof of \cite[Proposition 5.2]{DN}, it lifts to an effective cuspidal divisor $C$ of
degree $5$ on $X_1(n)_{\Q(\zeta_n)}$, since the cuspidal subscheme is finite flat over
$\Z[1/n]$ and every cusp of $X_1(n)$ is $\Q(\zeta_n)$-rational.

Let $y:=(\dm a-1)[x^{(5)}-C]\in J_1(n)(\Q(\zeta_n))$. Writing
$[x^{(5)}-C]=[x^{(5)}-5c_0]+[5c_0-C]$ and using that $[x^{(5)}-5c_0]\in J_1(n)(\Q)$, that
$(\dm a-1)J_1(n)(\Q)$ is finite, and that $[5c_0-C]$ is torsion by Manin--Drinfeld, we see that
$y$ is torsion. By construction $y$ reduces to $0$ modulo $\wp$. If $p>2$, reduction is
injective on $J_1(n)(\Q(\zeta_n))_\tors$ by \cite[Appendix]{katz}, so $y=0$. If $p=2$ and $n$ is odd, the kernel of reduction has exponent at most $2$ by \cite[Proposition 2.4]{parent}, so $2y=0$. In either case $\varepsilon\, y=0$, i.e.
\[
\varepsilon(\dm a-1)(x^{(5)}-C)=\ddiv(f)
\]
for some $f\in\Q(\zeta_n)(X_1(n))$.

If $f$ is non-constant then, separating the positive and negative parts,
$\deg f\leq \varepsilon\deg\bigl(\dm a x^{(5)}+C\bigr)=10\varepsilon$, contradicting (2).

So $f$ is constant, i.e.\ $\varepsilon(\dm a-1)(x^{(5)}-C)=0$ as a divisor. The diamond
operators preserve the set of cusps and its complement, so
$\dm a x^{(5)}=x^{(5)}$ and $\dm aC=C$. Now $\dm a$ permutes the five distinct
geometric points in the support of $x^{(5)}$. If every one of them had trivial stabilizer in
the cyclic group $\langle\dm a\rangle$, all orbits would have length $\ord\dm a$ and hence
$\ord\dm a\mid 5$, contradicting (1). So some conjugate $x^\sigma$ satisfies
$\dm b x^\sigma=x^\sigma$ with $b\not\equiv\pm1\pmod n$. Writing $x^\sigma=(E',P')$, this means
$(E',P')\simeq(E',bP')$, so $E'$ has an automorphism $\zeta\neq\pm1$, hence $j(E')\in\{0,1728\}$
and $x^\sigma$ is a CM point of degree $5$, contradicting (3).
\end{proof}

\begin{proposition}\label{prop:77_85}
The groups $\torz{77}$ and $\torz{85}$ are not quintic torsion groups.
\end{proposition}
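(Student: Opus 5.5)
Suppose $E/K$ is a quintic elliptic curve with a point of order $n\in\{77,85\}$, giving $x\in Y_1(n)(K)$. The plan is to force $E$ to have bad reduction at every prime above $2$ and then to contradict \Cref{prop:M3} with $p=2$. First I note that $[\Q(x):\Q]=5$: the residue field of $x$ lies in $K$, and degree $1$ is excluded by Mazur's theorem.

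\textbf{Step 1: bad reduction at all primes above $2$.} I apply \Cref{lem:badred} with $p=2$ and $d=5$. This requires that no elliptic curve over $\F_{2^k}$, $k\leq5$, has a point of order $77$ or $85$. The Hasse bound gives $\#E'(\F_{2^k})\leq 2^k+1+2\cdot 2^{k/2}$, which is at most $45$ for $k=5$, so such a point cannot exist. Hence $E$ has good reduction at no prime of $K$ above $2$.

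\textbf{Step 2: the hypotheses of \Cref{prop:M3}.} I take $p=2$. Since $n$ is odd, $\varepsilon=2$. The admissible elements come from \Cref{table:posrank}: $a=3$ for $n=77$ and $a=2$ for $n=85$, with $\ord\dm a=30$ and $8$ respectively. Neither order divides $5$, so condition (1) holds. Condition (3) follows from Clark--Corn--Rice--Stankewicz, since $n\geq22$.

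Condition (2) requires $\gon_\C X_1(n)>20$. This is the expected obstacle, because Abramovich's bound alone is weak here. The indices are $[\PSL_2(\Z):\overline{\Gamma_1(77)}]=1440$ and $[\PSL_2(\Z):\overline{\Gamma_1(85)}]=1728$. Multiplying by $325/2^{15}$ gives only about $14.3$ and $17.1$, which falls short of $20$.

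\begin{itemize}
\item \emph{First attempt: known gonality results.} I would first try to quote stronger lower bounds for $\gon_\C X_1(n)$. Candidates are Derickx--van Hoeij, or bounds via $\gon_\C X_1(n)\geq\gon_\C X_0(n)$-type arguments combined with $\F_p$-point counts, which give $\gon_{\F_q}$ bounds. Note that an $\F_q$-gonality bound controls only $\gon_\Q$, not $\gon_\C$.
\item \emph{Fallback: modify the argument.} If no such bound is available, I would rerun the proof of \Cref{prop:M3} and replace $\gon_\C$ by $\gon_{\Q(\zeta_n)}$. The function $f$ there is defined over $\Q(\zeta_n)$, so the weaker quantity suffices. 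I would then bound $\gon_{\Q(\zeta_n)}X_1(n)$ from below by $\gon_{\F_{\ell}}X_1(n)$ at a prime $\ell\equiv1\pmod n$, using point counts: $\#X_1(n)(\F_{\ell})>20(\ell+1)$ is impossible, since it would contradict a degree-$20$ map to $\mathbb P^1$.
\item \emph{Alternative: use the divisor more cleverly.} The divisor $2(\dm a-1)(x^{(5)}-C)$ has positive part $2\dm a x^{(5)}+2C$. Since $\dm a$ acts freely on the non-CM points, one might lower the degree needed. However, I expect the real input to be a gonality bound exceeding $20$.
\end{itemize}

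\textbf{Step 3: conclusion.} With Steps 1 and 2 in place, \Cref{prop:M3} shows that $E$ has good reduction at some prime of $K$ above $2$. This contradicts Step 1, so $\torz{77}$ and $\torz{85}$ are not quintic torsion groups.
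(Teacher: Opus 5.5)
\textbf{Gap: condition (2) of \Cref{prop:M3} is left unproved because of an arithmetic error.} Your outline is the paper's proof. You use \Cref{lem:badred} at $p=2$ via the Hasse bound, then \Cref{prop:M3} with $p=2$, $\varepsilon=2$, $a=3$ for $n=77$ and $a=2$ for $n=85$, and you handle conditions (1) and (3) correctly. For (2) you conclude that Abramovich's bound falls short of $20$, but that conclusion rests on wrong indices. The correct formula is $[\PSL_2(\Z):\overline{\Gamma_1(n)}]=\tfrac12 n^2\prod_{\ell\mid n}(1-\ell^{-2})$. It gives $\tfrac12\cdot 48\cdot 120=2880$ for $n=77$ and $\tfrac12\cdot 24\cdot 288=3456$ for $n=85$. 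These are twice the values you used; your $1440$ and $1728$ are the indices for $n=57$ and $n=63$. With the correct indices, Abramovich's bound gives $\gon_\C X_1(77)\geq\tfrac{325}{2^{15}}\cdot 2880>28.56$ and $\gon_\C X_1(85)\geq\tfrac{325}{2^{15}}\cdot 3456>34.27$. Both exceed $20$, and this is exactly how the paper verifies (2); no further input is needed.

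\textbf{Your proposed workarounds would not have closed the gap.} The third bullet is not an argument. The point-count fallback cannot succeed. Since $g(X_1(77))=1+\tfrac{2880}{12}-\tfrac{120}{2}=181$, the Weil bound gives $\#X_1(77)(\F_\ell)\leq \ell+1+362\sqrt{\ell}<20(\ell+1)$ for every $\ell>361$. In particular this holds for every prime $\ell\equiv 1\pmod{77}$, the least of which is $463$. The same happens for $n=85$, where $g=225$ and the least prime $\ell\equiv1\pmod{85}$ is $1021$. So point counts can never certify $\gon_{\F_\ell}X_1(n)>20$ here. The fix is simply to recompute the index; with that correction your proof becomes the paper's.
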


\begin{proof}
Let $n\in\{77,85\}$ and suppose for contradiction that $E$ is an elliptic curve over a quintic
field $K$ with a point of order $n$. We apply \Cref{prop:M3} with $p=2$. Since $n$ is odd in both cases, $\varepsilon=2$. By the Hasse bound no elliptic curve over $\F_{2^k}$ with
$k\leq 5$ has a point of order $n$ for $n\geq 45$ as $(2^{5/2}+1)^2<45$. So by \Cref{lem:badred} the curve $E$ has bad reduction at all
primes of $K$ above $2$, contradicting the conclusion of \Cref{prop:M3} once its assumptions are verified.

For $n=77$ take $a=3$, which is admissible because the only newform of positive analytic rank
of level dividing $77$ has trivial character. We have $\ord\dm 3=30\nmid 5$. Abramovich's bound
with $[\PSL_2(\Z):\overline{\Gamma_1(77)}]=2880$ gives $\gon_\C X_1(77)>28.56>20$, and
$d_{\mathrm{CM}}X_1(77)=60>5$.

For $n=85$ take $a=2$ (as in \cite[Table 2]{DN}). It lies in the kernel of the
character of \lmfdbnewform{85}{2}{j}{a}, the only newform of positive analytic rank and
non-trivial character of level dividing $85$, so $a=2$ is admissible, and
$\ord\dm 2=8\nmid5$. Abramovich's bound with index $3456$ gives $\gon_\C X_1(85)>34.27>20$, and
$d_{\mathrm{CM}}X_1(85)=32>5$.
\end{proof}

\begin{remark}
For $n=57,63,65$ Abramovich's bound gives $\gon_\C X_1(n)>14.28$, $17.14$, $19.99$
respectively. These do not prove $\gon_\C X_1(n)>20$, so they do not allow us to apply
\Cref{prop:M3} with $p=2$. For $n=65$ the bound gives only $\gon_\C X_1(65)\geq 20$, whereas the criterion
requires $\gon_\C X_1(65)>20$. Under Selberg's eigenvalue conjecture $\lambda_1\geq1/4$ it would
read $\gon_\C X_1(65)\geq 2016/96=21$ and $\torz{65}$ would be excluded by \Cref{prop:M3} alone.
Thus we work at an odd prime, where $\varepsilon=1$ and only $\gon_\C X_1(n)>10$ is needed.
\end{remark}

\subsection{The twisted Hecke sieve}

For the remaining three groups we combine \Cref{prop:M3} at an odd prime (which excludes the
totally cuspidal reduction) with a variant of the Hecke sieve in which $A_q$ is replaced by an
operator that kills $J_1(n)(\Q)$.

\begin{proposition}[Twisted Hecke sieve]\label{prop:twisted}
Let $n>4$, let $p\nmid 2n$ and $q\nmid 2pn$ be primes, and let $\psi\in\Z[\Delta(n)]$ be such
that $\psi\bigl(J_1(n)(\Q)\bigr)$ is finite. Put $t:=A_q\psi$. Let $D+C\in X_1(n)^{(5)}(\F_p)$
with $D$ effective non-cuspidal of
degree $d'\geq1$ and $C$ effective cuspidal of degree $5-d'$, and suppose that
\begin{enumerate}
\item $[t(D)]\neq0$ in $J_1(n)(\F_p)$, and
\item $q\bmod n$ lies in $H_i$ for every closed point $C_i$ in the support of $C$, in the
notation of \Cref{lem:CC}.
\end{enumerate}
Then no $z\in Y_1(n)^{(5)}(\Q)$ reduces to $D+C$.
\end{proposition}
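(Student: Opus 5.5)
We argue by contradiction: suppose $z\in Y_1(n)^{(5)}(\Q)$ reduces modulo $p$ to $D+C$. We will show that $[t(z_{\F_p}-C)]=0$ in $J_1(n)(\F_p)$, which contradicts hypothesis (1), since $z_{\F_p}-C=D$. The argument mirrors the proof of the untwisted sieve \cite[Proposition 5.5]{DN}, with $\psi$ inserted. Write
\[
t(D)=t(z_{\F_p}-5c_0)-t(C-5c_0),
\]
using that $c_0$ is $\Z[\tfrac1n]$-rational. We treat the two terms separately.

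\emph{The term $t(z-5c_0)$.} The class $y:=[z-5c_0]$ lies in $J_1(n)(\Q)$. By hypothesis $\psi(y)\in\psi(J_1(n)(\Q))$, which is a finite group, so $\psi(y)\in J_1(n)(\Q)_\tors$. Since $q$ splits completely in $\Q$, \Cref{prop:tools}(2) gives $A_q\psi(y)=0$, so $[t(z-5c_0)]=0$ in $J_1(n)(\Q)$. Reduction modulo $p$ is a homomorphism $J_1(n)(\Q)\to J_1(n)(\F_p)$ at a prime of good reduction ($p\nmid n$). Both $t$ and $\psi$ are given by correspondences with good reduction away from $nq$, so they commute with reduction. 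Hence $[t(z_{\F_p}-5c_{0,\F_p})]=0$.

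\emph{The term $t(C-5c_0)$.} Since $\psi(c_0)$ is a degree-$e$ combination of cusps, we cannot simply say $A_q\psi(c_0)=0$. Instead we apply \Cref{lem:CC}(2) twice. First apply it to $C$ itself: hypothesis (2) is exactly its condition, so $[A_q\psi(C)]=0$. Next apply it to $c_{0,\F_p}$. This is a closed point of degree one whose orbit $O$ is the single cusp $c_0$, which is $\Q$-rational, so its stabilizer is $H=(\torz n)^\times\ni q$. Hence $[A_q\psi(5c_0)]=0$ as well. One could equally note that $\psi(c_0)-ec_0$ is a $\Q$-rational cuspidal degree-zero divisor, hence torsion by Manin--Drinfeld, and use \Cref{prop:tools}(2),(3).

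Combining the two terms gives $[t(D)]=0$, contradicting (1). The only delicate point is to justify that $t$ commutes with reduction modulo $p$ on divisor classes, including on cuspidal divisors. This holds because $T_q$, $\dm q$ and the diamond operators extend to correspondences on $X_1(n)_{\Z[1/nq]}$, and $p\nmid nq$. The same fact was already used in \Cref{lem:CC}(1), so no new input is needed.
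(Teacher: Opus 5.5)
Your proof is correct and follows the same route as the paper: you kill $[t(z-5c_0)]$ globally using the finiteness of $\psi(J_1(n)(\Q))$ and \Cref{prop:tools}(2) with $L=\Q$, kill $[t(C)]$ with \Cref{lem:CC}(2), and then reduce modulo $p$. The one inaccuracy is your remark that one cannot simply say $A_q\psi(c_0)=0$: since $A_q$ commutes with $\Z[\Delta(n)]$ on divisors, $A_q\psi(c_0)=\psi(A_q(c_0))=0$ already as a divisor, which is exactly how the paper disposes of $c_0$, so your detour through \Cref{lem:CC}(2) or Manin--Drinfeld is valid but unnecessary.
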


\begin{proof}
Suppose some $z\in Y_1(n)^{(5)}(\Q)$ reduces to $D+C$. Since $A_q$ commutes with the diamond
operators and $A_q(c_0)=0$, we have $t(c_0)=0$, so $[t(z)]=A_q\bigl(\psi([z-5c_0])\bigr)$. Now
$\psi([z-5c_0])$ lies in $\psi\bigl(J_1(n)(\Q)\bigr)$ and is hence torsion, so $A_q$ kills it by \Cref{prop:tools}(2) applied with $L=\Q$. It follows that $[t(z)]=0$ in $J_1(n)(\Q)$.

As $\psi\in\Z[\Delta(n)]$, hypothesis (2) and \Cref{lem:CC}\,(2) give $[t(C)]=0$ in
$J_1(n)(\F_p)$. Reducing modulo $p$ now gives $0=[t(z)]_{\F_p}=[t(D)]+[t(C)]=[t(D)]$,
contradicting (1).
\end{proof}

We use two instances of $t$:
\[
t_a:=A_q(\dm a-1),\qquad
t_H:=A_q\pi^*\pi_*=A_q\Bigl(\textstyle\sum_{h\in H/\{\pm1\}}\dm h\Bigr).
\]
Here $a$ is admissible for $n$, so $\psi=\dm a-1$ satisfies the hypothesis by definition. Furthermore, $H\subseteq(\torz n)^\times$ is a subgroup containing $-1$ with $\rk J_H(\Q)=0$ and $\pi:X_1(n)\to X_H$ is the quotient map, of degree $\#H/2$ since $\dm{-1}$ acts trivially, so $\psi=\pi^*\pi_*$ satisfies the hypothesis because $\pi_*J_1(n)(\Q)\subseteq J_H(\Q)$ is finite.

\begin{remark}
A divisor $D$ with $\dm aD=D$ satisfies $t_a(D)=0$ and hence survives the sieve trivially. Such a $D$ has to be treated with a different $a$, or with $t_H$.
\end{remark}

\begin{proposition}\label{prop:57_63_65}
The groups $\torz{57}$, $\torz{63}$ and $\torz{65}$ are not quintic torsion groups.
\end{proposition}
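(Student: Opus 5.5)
We argue by contradiction. Fix $n\in\{57,63,65\}$ and suppose $E/K$ is an elliptic curve over a quintic field with a point of order $n$, giving $x\in Y_1(n)(K)$. By Mazur's theorem, $[\Q(x):\Q]=5$, so $z:=x^{(5)}\in Y_1(n)^{(5)}(\Q)$.

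\textbf{Step 1: choose an odd auxiliary prime.} We pick an odd prime $p\nmid n$; natural first choices are $p=5$ for $n=57$, $p=5$ for $n=63$, and $p=3$ or $7$ for $n=65$. The aim is that very few elliptic curves over $\F_{p^e}$, $e\leq5$, carry a point of order $n$, so that \Cref{lem:badred} restricts the admissible types of $z_{\F_p}=D+C$. For instance, since $(\sqrt{3^k}+1)^2<57$ for $k\le 3$, at $p=3$ only degrees $4$ and $5$ can occur.

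\textbf{Step 2: exclude totally cuspidal reduction.} We apply \Cref{prop:M3} at this odd prime, so $\varepsilon=1$. We take the admissible elements of \Cref{table:posrank}: $a=2$ for $n=57$ (order $9$), $a=5$ for $n=63$ (order $3$), and $a=2$ for $n=65$ (order $6$). None of these orders divides $5$. Abramovich's bound gives $\gon_\C X_1(n)>14.28,\,17.14,\,19.99$, all exceeding $10$. Finally, $d_{\mathrm{CM}}X_1(n)>5$ by Clark--Corn--Rice--Stankewicz, since $n\geq22$. Hence $z_{\F_p}$ has a non-cuspidal part $D$ of degree $d'\geq1$.

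\textbf{Step 3: run the twisted sieve.} We enumerate the non-cuspidal closed points of $X_1(n)_{\F_p}$ of degree at most $5$, group them into diamond orbits, and assemble every type $D$ of degree $d'$ with a cuspidal complement of degree $5-d'$. For each representative $D$ we apply \Cref{prop:twisted} with $t=t_a=A_q(\dm a-1)$. For complements that contain cusps from Galois orbits splitting modulo $p$, we choose $q\nmid 2pn$ with $q\bmod n\in D_p$, as in \Cref{rem:CC_used}; for complements that do not, any $q$ works. We compute $[t(D)]\in J_1(n)(\F_p)$ using the fast $T_q$ routine of \Cref{sec:fast}. Since $t$ commutes with diamonds, one $D$ per orbit suffices.

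\textbf{Step 4: handle the survivors.} A divisor $D$ that survives, typically because $\dm aD=D$ (a whole $\dm a$-orbit), is retried with a second admissible element: $a=4$ for $57$ and $a=3$ for $65$. For $n=63$ only $\dm5$ is available, so survivors are retried with $t_H=A_q\pi^*\pi_*$ for a subgroup $H\ni-1$ with $\rk J_H(\Q)=0$. Such an $H$ must be chosen so that $J_H$ avoids the rank-one factor \lmfdbnewform{63}{2}{s}{a}, for instance an $H$ on which its character is nontrivial. Its rank zero would be verified by Kato's theorem and the LMFDB.

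\textbf{Main obstacle.} The expected difficulty is computational. The Jacobians of genus $85$, $97$ and $121$ make the $T_q$ evaluations and class-group tests over $\F_p$ expensive, and $\dm a$-invariant divisors are forced survivors that need an alternative operator. For $n=63$, finding a suitable $H$ with $\rk J_H(\Q)=0$ that still separates the survivors is the step I expect to be hardest.
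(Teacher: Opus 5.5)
Your proposal is correct and follows essentially the same route as the paper. The paper first applies \Cref{prop:M3} at $p=5,5,3$ with $a=2,5,2$ to exclude totally cuspidal reduction, and then runs the twisted Hecke sieve: with $t_a$, $a\in\{2,4\}$, for $n=57$; with $t_a$, $a\in\{2,3\}$, for $n=65$; and for $n=63$ with $t_5$ followed by $t_H$ for $H=\dm{\pm8}$, which is admissible precisely because $\chi(8)=-1$ for the character of \lmfdbnewform{63}{2}{s}{a}. The paper also observes that condition (2) of \Cref{prop:twisted} is vacuous here, since only cuspidal complements of degree at most $2$ occur and no cuspidal closed point of degree at most $2$ arises from a splitting Galois orbit, so your restriction $q\bmod n\in D_p$ is harmless but never actually needed.
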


\begin{proof}
Let $n\in\{57,63,65\}$ and let $p=5,5,3$ respectively. Suppose $E$ is an elliptic curve over a
quintic field $K$ with a point of order $n$, and let $x\in Y_1(n)(K)$ be the corresponding
point. The proof has two steps: \Cref{prop:M3} shows that the reduction of $x^{(5)}$ modulo $p$ is not totally cuspidal, and \Cref{prop:twisted} then excludes every remaining possibility.

\emph{The totally cuspidal reduction.} We apply \Cref{prop:M3} with the odd prime $p$, so
$\varepsilon=1$ and we need $\gon_\C X_1(n)>10$, $\ord\dm a\nmid5$ and
$d_{\mathrm{CM}}X_1(n)>5$. Abramovich's bound gives $\gon_\C X_1(n)>14.28,17.14,19.99$
respectively, all $>10$. We take $a=2,5,2$ respectively, which are admissible by
\Cref{table:posrank}, and $\ord\dm a=9,3,6$, none of which divides $5$. Finally
$d_{\mathrm{CM}}X_1(n)=12,36,24$, all $>5$. Hence $x^{(5)}_{\F_p}$ has a non-cuspidal
component $D$ of some degree $d'\geq1$.

\emph{The remaining reductions.} We rule out every possible $D+C$ by \Cref{prop:twisted}.
By enumerating the elliptic curves over $\F_{p^e}$ we find that $Y_1(n)(\F_{p^e})$ is empty
for every $e\leq5$ except those listed in \Cref{table:twisted}, so $D$ is irreducible of one
of the degrees listed there. The numbers of non-cuspidal closed points of $X_1(n)_{\F_p}$ of
each of those degrees, and of their orbits under the diamond action, are given in the table.
For one representative $D$ of each orbit we \gitlink{hecke_sieve_twisted_v6.m}{compute} that $[t(D)]\neq0$ for one of the listed
primes $q$, and there are no survivors. As $t$
commutes with the diamond operators, this covers every $D$.

Condition (2) of \Cref{prop:twisted} is vacuous in all three cases, as we now explain. By \cite[Lemma 2.8]{DEHMZ} the Galois orbits of cusps over $\Q$ have degrees
$1^{18},2^{9},18,18$ for $n=57$ (fields $\Q$, $\Q(\zeta_3)$, $\Q(\zeta_{19})$,
$\Q(\zeta_{57})^+$), $1^{18},2^{6},6^{3},6^{3},12,18$ for $n=63$ ($\Q$, $\Q(\zeta_3)$,
$\Q(\zeta_7)$, $\Q(\zeta_9)$, $\Q(\zeta_{21})$, $\Q(\zeta_{63})^+$) and
$1^{24},4^{6},12^{2},24$ for $n=65$ ($\Q$, $\Q(\zeta_5)$, $\Q(\zeta_{13})$,
$\Q(\zeta_{65})^+$). Only the types with a cuspidal complement of degree at most $2$ occur, and
for those the groups $H_i$ of \Cref{lem:CC} are all of $(\torz n)^\times$: modulo $p$ the
orbits of degree $2$ stay irreducible (for $n=57,63$), so every cuspidal closed point of $X_1(n)_{\F_p}$ of degree $\leq2$ is either the reduction of a $\Q$-rational cusp or the reduction of a full Galois orbit. 

It follows that the assumptions of \Cref{prop:twisted} are satisfied for every possible $D+C$, so no element of $Y_1(n)^{(5)}(\Q)$ reduces to any of them. But $x^{(5)}\in Y_1(n)^{(5)}(\Q)$ does, giving a contradiction. 
\end{proof}

\begin{table}[H]
\centering
\footnotesize
\begin{tabular}{r|r|r|@{\quad }l|@{\ }c|@{\quad }l|@{\quad }l|@{\quad }l}
\hline
$n$ & $g$ & $p$ & $t$ & $e$ with $Y_1(n)(\F_{p^e})\neq\emptyset$ & points & orbits & $q$\\
\hline
$57$ & $85$  & $5$ & $t_a$, $a\in\{2,4\}$ & $3,4,5$ & $72,81,756$  & $4,5,42$ & $7,11,13$\\
$63$ & $97$  & $5$ & $t_5$, $t_{\dm{\pm8}}$ & $3,4,5$ & $12,108,720$ & $2,6,40$ & $11,13,17$\\
$65$ & $121$ & $3$ & $t_a$, $a\in\{2,3\}$ & $4,5$   & $12,96$      & $1,4$    & $7,11$\\
\bottomrule
\end{tabular}
\medskip
\caption{The twisted Hecke sieve computations of \Cref{prop:57_63_65}. The columns ``points''
and ``orbits'' list, for the degrees $e$ in the previous column, the number of non-cuspidal
closed points of $X_1(n)_{\F_p}$ of degree $e$ and the number of their diamond orbits. In each
case the sieve leaves no survivors.}
\label{table:twisted}
\end{table}

\begin{remark}
For $n=63$ the operator $t_H$ with $H=\dm{\pm8}$ is available because $\chi(8)=-1$ for the character $\chi$ of
\lmfdbnewform{63}{2}{s}{a}, so that \lmfdbnewform{63}{2}{s}{a} does not contribute to
$J_H$ with $H=\dm{\pm8}$: the isogeny factors of $J_H$ correspond to the newforms of level
dividing $63$ whose character is trivial on $H$, i.e. of order $1$ or $3$, and all of these
have analytic rank $0$ (this is the argument of \cite[Proposition 6.4]{DN}). We apply $t_5$ first and $t_{\dm{\pm8}}$ to the divisors that $t_5$ does not exclude. No divisor survives both tests.
\end{remark}

\section{Direct analysis via the cuspidal class group}\label{sec:direct}

The Hecke sieve shows that certain divisors on $X_{\F_p}$ are not reductions of rational
divisors, but it says nothing about the divisors it does not eliminate. To determine
\emph{all} quintic points on $X_1(28)$, $X_1(30)$ and $X_1(2,18)$ we use instead a variant of
the ``direct analysis over $\F_p$'' of \cite[\S 5.2]{DEHMZ}, which determines all rational
effective divisors of a given degree.

\begin{proposition}\label{prop:direct}
Let $X=X_1(m,n)$ with $m\in\{1,2\}$, let $p\nmid 2n$ be a prime, and assume that $\rk J(\Q)=0$
and $J(\Q)=\ClCusp_\Q X$.
Let $c_1,\dots,c_r$ be the Galois orbits of cusps over $\Q$, ordered so that $c_1$ is a
$\Q$-rational cusp, and let $\varphi:\Z^r\to \Cl(X_{\F_p})$ be the homomorphism sending the
$i$-th generator to the class of $\overline{c_i}-\deg(c_i)\,\overline{c_1}$. Then:
\begin{enumerate}
\item[\textup{(a)}] $\Z^r/\ker\varphi\simeq J(\Q)$ canonically, and reduction modulo $p$ is
      injective on $J(\Q)$ with image $\operatorname{im}\varphi$;
\item[\textup{(b)}] if $D\in X^{(5)}(\Q)$ then $\red_p\bigl([D-5c_1]\bigr)+[5\overline{c_1}]$
      lies in the set $S_p$ of classes of effective divisors of degree $5$ on $X_{\F_p}$;
\item[\textup{(c)}] for every $\gamma\in J(\Q)$ the effective divisors of degree $5$ over $\Q$
      in the class of $5c_1+\gamma$ are exactly the elements of the complete linear system
      $|5c_1+\gamma|$.
\end{enumerate}
\end{proposition}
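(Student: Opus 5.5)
The plan is to prove (a) first, since (b) and (c) are formal consequences of it together with the standard identification $\Cl^0(X)=J(\Q)$ for a curve with a rational point.

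For (a), consider first the map $\Z^r\to J(\Q)$ that sends the $i$-th generator to the class of $c_i-\deg(c_i)c_1$. Each $c_i$ is a Galois orbit, so it is a $\Q$-rational divisor, and $c_1$ is a rational point, so these classes lie in $J(\Q)$. The map is surjective, because every $\Q$-rational cuspidal divisor of degree $0$ is an integral combination $\sum n_ic_i$ with $\sum n_i\deg c_i=0$. Such a divisor equals $\sum n_i(c_i-\deg(c_i)c_1)$, so the image is $\ClCusp_\Q X$, which is $J(\Q)$ by hypothesis.

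Next I will show that reduction modulo $p$ is injective on $J(\Q)$. Since $\rk J(\Q)=0$, the group $J(\Q)$ is torsion. The prime $p$ is odd and $p\nmid n$, so $J$ has good reduction at $p$ (as $X_1(m,n)$ is smooth over $\Z[1/n]$). Katz's result with $L=\Q$ then gives injectivity. The cusps reduce compatibly: the cuspidal subscheme is finite étale, so $\overline{c_i}$ is the reduction of $c_i$. Hence reduction composed with the map above is exactly $\varphi$. Injectivity of reduction therefore gives $\ker\varphi$ equal to the kernel of $\Z^r\to J(\Q)$, which yields $\Z^r/\ker\varphi\simeq J(\Q)$ with image $\operatorname{im}\varphi$.

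For (b), take $D\in X^{(5)}(\Q)$. Then $[D-5c_1]\in J(\Q)$, and reduction of divisors commutes with taking classes, because the reduction map on $\operatorname{Pic}$ is induced by specialization on the smooth proper model. So $\red_p([D-5c_1])+[5\overline{c_1}]=[\overline D]$, where the reduction $\overline D$ of $D$ is effective of degree $5$ on $X_{\F_p}$. Its class therefore lies in $S_p$.

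For (c), note that since $X(\Q)\neq\emptyset$ (it contains $c_1$), every class in $\operatorname{Pic}(X_{\Qbar})^{G_\Q}$ is represented by a $\Q$-rational divisor. Thus $5c_1+\gamma$ has a rational representative $D_0$. The $\Q$-rational effective divisors in this class are $D_0+\ddiv f$ with $f\in\Q(X)$ satisfying $D_0+\ddiv f\geq0$, and this is precisely the complete linear system $|D_0|$ over $\Q$. Conversely, any effective rational $D$ of degree $5$ with $[D-5c_1]=\gamma$ lies in this system. The only subtle point is Galois descent of $H^0$, that is, that a rational effective divisor linearly equivalent over $\Qbar$ is equivalent via a function defined over $\Q$. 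This follows from Hilbert 90 applied to $\Qbar(X)^\times/\Qbar^\times$.

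I expect the main obstacle to be the careful justification of injectivity of reduction together with the compatibility of $\varphi$ with the cusp reductions.
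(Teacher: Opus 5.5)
Your proof is correct and follows the paper's argument. For (a) you use surjectivity of $\Z^r\to J(\Q)$ from the hypothesis $J(\Q)=\ClCusp_\Q X$ and Katz's injectivity of reduction on the torsion group $J(\Q)$ at the odd good prime $p$, so that $\ker\varphi$ is the kernel of $\Z^r\to J(\Q)$; for (b) you use compatibility of reduction with classes. The only difference is in (c), which the paper treats as the definition of the linear system, whereas you add a Galois-descent justification. The precise input there is $H^1(G_\Q,\Qbar^\times)=0$, the kernel term in $1\to\Qbar^\times\to\Qbar(X)^\times\to\Qbar(X)^\times/\Qbar^\times\to1$, and a rational, even cuspidal, representative of $5c_1+\gamma$ exists directly because $\gamma\in\ClCusp_\Q X$.
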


\begin{proof}
Reduction modulo $p$ is injective on $J(\Q)=J(\Q)_\tors$ by \cite[Appendix]{katz}, since $p>2$
is a prime of good reduction of $X$. By hypothesis the map $\Z^r\to J(\Q)$ sending the
$i$-th generator to $[c_i-\deg(c_i)c_1]$ is surjective, and $\varphi$ is its composite with the
injective map $\red_p$. Hence $\ker\varphi$ is the kernel of $\Z^r\to J(\Q)$, giving (a). Part
(b) holds because $\red_p$ commutes with the Abel--Jacobi map and $D$ reduces to an effective
divisor of degree $5$ on $X_{\F_p}$. Part (c) is the definition of the linear system.
\end{proof}

\begin{remark}
\Cref{prop:direct} turns the determination of $X^{(5)}(\Q)$ into a finite computation, and this
is how we use it. We compute $\Cl(X_{\F_p})$ and all closed points of $X_{\F_p}$ of degree at
most $5$, and construct $S_p$. By (a) the group $J(\Q)$ is the finite group $\Z^r/\ker\varphi$, so
we enumerate its elements $\gamma$ and discard those failing the condition in (b). Using several primes $p$ and keeping the $\gamma$ that survive at each of them shortens the list further. For each surviving $\gamma$ we compute the Riemann--Roch space $L(5c_1+\gamma)$ over
$\Q$, whose projectivization is $|5c_1+\gamma|$.
By (b) every element of $X^{(5)}(\Q)$ is found in this way.
\end{remark}

\begin{remark}\label{rem:basecusp}
The choice of base point in \Cref{prop:direct} matters: $c_1$ must be the reduction of a
$\Q$-rational cusp, not merely an $\F_p$-rational cusp of the model. At a prime $p$ at which
some Galois orbit of cusps of degree $>1$ splits into $\F_p$-rational points, an arbitrary
$\F_p$-rational cusp of the model need not be the reduction of a $\Q$-rational one, and then
the subgroup of $\Cl(X_{\F_p})$ generated by the classes
$[\overline{c_i}-\deg(c_i)\,\overline{c}]$ is strictly larger than the image of $\ClCusp_\Q X$.
This is not hypothetical: on $X_1(30)$ modulo $7$ the four Galois orbits (over $\Q$) of cusps of degree $2$
split into $\F_7$-rational cusps, so eight of the sixteen $\F_7$-rational cusps are not reductions of $\Q$-rational ones. Taking $\overline c$ to be one of those eight, we \gitlink{cuspgroup_check.m}{compute} invariant factors $[136,8160]$ instead of $[4,8160]$.
\end{remark}

The hypothesis $J(\Q)=\ClCusp_\Q X$ holds for all the curves we need: for $X_1(n)$ with
$n\leq 55$, $n\neq 37,43,53,54$ this is \cite[Corollary 4.14]{DEHMZ}, and for $X_1(2,2N)$ with
$N\leq16$ it is \cite[Theorem 3.1(3) and Proposition 4.16]{DEHMZ}.

We also explain, once and for all, how to count the effective $\Q$-rational cuspidal divisors of
a given degree. If $X$ has $a_e$ Galois orbits of cusps over $\Q$ of degree $e$, then such a
divisor is a non-negative integer combination of those orbits, so the number of them of degree $d$ is the coefficient
of $t^d$ in
\begin{equation}\label{eq:cuspgf}
\prod_{e\geq1}(1-t^e)^{-a_e}.
\end{equation}
In each proof below the degrees of the Galois orbits of cusps are listed, and the number of
effective $\Q$-rational cuspidal divisors of degree $5$ quoted afterwards is the corresponding
coefficient of $t^5$. These divisors need not lie in distinct classes. They do when $\gon_\Q X>5$, but not on $X_1(2,18)$, where $1980$ divisors fall into $1926$ classes.

\begin{proposition}\label{prop:2_20_2_24}
The groups $\tg{2}{20}$ and $\tg{2}{24}$ are not quintic torsion groups. Hence, by \Cref{lem:4n}, neither are $\torz{40}$ and $\torz{48}$.
\end{proposition}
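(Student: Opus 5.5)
We apply \Cref{prop:direct} to $X=X_1(2,20)$ and $X=X_1(2,24)$ and determine the whole set $X^{(5)}(\Q)$. Both levels satisfy $N=10,12\leq16$, so $\rk J(\Q)=0$ and $J(\Q)=\ClCusp_\Q X$ by \cite[Theorem 3.1(3) and Proposition 4.16]{DEHMZ}. We fix a $\Q$-rational cusp $c_1$ (for instance $c_0$) and list the Galois orbits of cusps over $\Q$. Following the remark after \Cref{prop:direct}, we choose small primes $p\nmid 2n$: $p=3,7,11$ for $n=20$ and $p=5,7,11$ for $n=24$, with enough primes to cut the candidate list down. For each $p$ we compute $\Cl(X_{\F_p})$, the map $\varphi$, the finite group $J(\Q)\simeq\Z^r/\ker\varphi$, and the set $S_p$ of classes of effective degree $5$ divisors on $X_{\F_p}$, which comes from enumerating all closed points of degree $\leq5$. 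Following \Cref{rem:basecusp}, the reduction of $c_1$ must be used as the base point.

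Next we sieve the elements $\gamma\in J(\Q)$ by condition (b) at each chosen prime and keep only the $\gamma$ that survive at all of them. For each survivor we compute the Riemann--Roch space $L(5c_1+\gamma)$ over $\Q$. We expect every such space to have dimension $1$ and to be spanned by a function whose divisor of zeros is an effective $\Q$-rational \emph{cuspidal} divisor. Thus $X^{(5)}(\Q)$ should consist exactly of the cuspidal divisors counted by the coefficient of $t^5$ in \eqref{eq:cuspgf}. We will cross-check the count: since $\gon_\Q X>5$ for both curves (to be confirmed, e.g.\ from \cite{DS} or via $\F_p$-gonality), these divisors lie in distinct classes and each survivor has $\ell=1$. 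If so, no non-cuspidal closed point of degree $5$ exists. Moreover, no point of degree $1$ exists by Mazur, so there is no non-cuspidal point of degree dividing $5$. This shows $\tg{2}{20}$ and $\tg{2}{24}$ are not quintic torsion groups, and \Cref{lem:4n} then gives the statement for $\torz{40}$ and $\torz{48}$.

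The main obstacle is computational. It has two parts: enumerating all closed points of degree $\le5$ on $X_{\F_p}$ to build $S_p$ (the genera are moderate), and making the sieve of $J(\Q)$ small enough that the Riemann--Roch computations over $\Q$ remain feasible. If a single prime leaves too many $\gamma$, we will intersect the survivor sets from several primes, using the canonical identification with $J(\Q)$ from part (a). The cusp bookkeeping must also be done correctly whenever orbits split modulo $p$. A secondary risk is that some $L(5c_1+\gamma)$ has dimension $>1$. In that case we would analyze the resulting pencil explicitly and check that all its members are cuspidal, or that its non-cuspidal members are not defined over $\Q$.
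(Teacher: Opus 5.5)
Your proposal is correct and is essentially the paper's proof: apply \Cref{prop:direct} ($p=3,7$ for $X_1(2,20)$, $p=5$ for $X_1(2,24)$ in the paper), and use $\gon_\Q X>5$ from the proof of \cite[Proposition 5.3]{DS}. The one difference is that the paper never computes the Riemann--Roch spaces. It notes that the number of surviving classes ($1384$, resp.\ $1720$) equals the number of effective $\Q$-rational cuspidal divisors of degree $5$ from \eqref{eq:cuspgf}, which lie in distinct classes all among the candidates, so your ``cross-check'' together with $\gon_\Q X>5$ already finishes the argument.
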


\begin{proof}
Let $X=X_1(2,20)$. Its Jacobian has $J(\Q)=\ClCusp_\Q X\simeq \torz4\times\torz{60}\times\torz{120}$
of order $28800$, and $\gon_\Q X>5$, since Derickx and Sutherland proved
$\gon_{\F_3}X>5$ \cite[proof of Proposition 5.3]{DS}. There are $16$
Galois orbits of cusps over $\Q$, of degrees $1^8,2^4,4^4$, and carrying out the computation of \Cref{prop:direct} at $p=3$ and at $p=7$ we \gitlink{direct_analysis_X11.m}{obtain} the invariant factors $[4,60,120]$ at both primes. Over $\F_3$ there are $1752$ effective divisors of degree $5$ and over $\F_7$ there are $20920$. Exactly $1384$ of the $28800$ classes of $J(\Q)$ satisfy condition (b) of \Cref{prop:direct} for both $p=3$ and $p=7$. On the other hand, \eqref{eq:cuspgf} gives $1384$ effective $\Q$-rational cuspidal divisors of degree $5$, and they lie in $1384$ distinct
classes, all of which are among the $1384$ candidates. Hence every effective $\Q$-rational
divisor of degree $5$ is linearly equivalent to a cuspidal one, and since $\gon_\Q X>5$, it
\emph{equals} that cuspidal divisor. In particular $X$ has no
non-cuspidal point of degree $5$ (and none of degree $1$, by Mazur).

The argument for $X=X_1(2,24)$ is identical, with $p=5$. Here
$J(\Q)=\ClCusp_\Q X\simeq\torz2\times\torz4\times\torz4\times\torz{120}\times\torz{240}$ of
order $921600$, there are $19$ Galois orbits of cusps of degrees $1^8,2^6,4^5$, and
$\gon_{\F_5}X>5$ by \cite[proof of Proposition 5.3]{DS}, so $\gon_\Q X>5$. Over $\F_5$ there are $14020$ effective divisors of degree $5$, all in distinct classes. Exactly $1720$ classes of $J(\Q)$ satisfy (b), and \eqref{eq:cuspgf} gives $1720$
effective $\Q$-rational cuspidal divisors of degree $5$, lying in $1720$ distinct classes.
\end{proof}

\begin{remark}
The same computation carried out on $X_1(26)$ at $p=7$ gives an independent confirmation of
\Cref{prop:sieve} for $n=26$: there $J(\Q)=\ClCusp_\Q X_1(26)\simeq\torz{133}\times\torz{1995}$,
there are $12$ rational cusps and two Galois orbits of degree $6$, and the $\binom{16}{5}=4368$
classes satisfying (b) are the classes of the $4368$ effective rational cuspidal
divisors of degree $5$.
\end{remark}

We now turn to the three curves that do have quintic points.

\begin{proposition}\label{prop:28}
The non-cuspidal closed points of degree $5$ on $X_1(28)$ are exactly the six diamond
translates $\dm a x_0$, $a\in\Delta(28)$, of van Hoeij's point
\[
x_0=(\alpha,\ \alpha^3-1)\in X_1(28),\qquad \alpha^5-\alpha^4-2\alpha^3-\alpha^2+2\alpha+2=0,
\]
in Sutherland's coordinates \cite{sutherlandtables}.
\end{proposition}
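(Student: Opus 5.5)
We apply \Cref{prop:direct} to $X=X_1(28)$. First we record the inputs. We have $\rk J_1(28)(\Q)=0$ and $J_1(28)(\Q)=\ClCusp_\Q X_1(28)$ by \cite[Corollary 4.14]{DEHMZ}, since $28\leq55$ and $28\notin\{37,43,53,54\}$. The genus is $10$, and $\gon_\Q X_1(28)=6>5$ by \cite[Theorem 4 and Table 1]{DvH}. From \cite[Lemma 2.8]{DEHMZ} we list the Galois orbits of cusps over $\Q$ and their degrees; they come from the pieces indexed by $d\mid28$, with fields of definition $\Q$, $\Q(i)$ and $\Q(\zeta_7)^+$ or $\Q(\zeta_{28})^+$. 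From these degrees, \eqref{eq:cuspgf} gives the number $N$ of effective $\Q$-rational cuspidal divisors of degree $5$.

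Next we choose a prime $p\nmid 56$, say $p=3$ or $p=5$, and compute $\Cl(X_{\F_p})$. Using the base point $c_1$ equal to a genuinely $\Q$-rational cusp, as required by \Cref{rem:basecusp}, we compute the map $\varphi$ and read off $J(\Q)$ as $\Z^r/\ker\varphi$. We then enumerate all closed points of degree at most $5$ on $X_{\F_p}$, form the set $S_p$, and keep only the classes $\gamma\in J(\Q)$ satisfying condition (b). We repeat this at a second prime and intersect the surviving lists.

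The expectation is that the survivors consist of the $N$ cuspidal classes together with exactly one extra class, or a few extra classes that the diamond group permutes. The classes to check against are those of $x_0^{(5)}=\sum_\sigma x_0^\sigma$ and of its six diamond translates. Because $\gon_\Q X>5$, each surviving class contains at most one effective divisor, namely the unique element of $|5c_1+\gamma|$ when $\ell(5c_1+\gamma)=1$. Every cuspidal class is therefore occupied only by its cuspidal divisor. For each non-cuspidal survivor we compute $L(5c_1+\gamma)$ over $\Q$, obtain the unique effective divisor in it, and identify it in Sutherland's model with $\dm a x_0^{(5)}$. This requires checking three things:
\begin{enumerate}
\item $x_0$ lies on the model;
\item $x_0$ is non-cuspidal;
\item the six translates $\dm a x_0$ for $a\in\Delta(28)=\{\pm1,\pm3,\pm5,\pm9,\pm11,\pm13\}/\pm1$ are pairwise distinct closed points.
\end{enumerate}
Distinctness holds because the diamond action is free on non-CM points and $d_{\mathrm{CM}}X_1(28)>5$. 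Any rational effective degree-$5$ divisor supported on a non-cuspidal point of degree $5$ must itself be that point, so the non-cuspidal degree-$5$ closed points are exactly these six. A degree-$5$ divisor mixing cusps with points of lower degree is excluded as well: Mazur rules out rational points, and the class computation rules out the remaining combinations, since all candidate classes have already been accounted for.

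The main obstacle is the sieve step. The condition in (b) must cut $J(\Q)$ down to exactly the expected classes, so we may need several primes, or primes at which no cusp orbit splits, in order to keep $S_p$ small. We must also ensure that the non-cuspidal survivors really do come from $x_0$ and are not spurious classes with $\ell=0$. Such spurious classes are harmless, because a vanishing Riemann--Roch space simply contributes no divisor.
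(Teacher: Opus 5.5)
Your proposal is correct and is essentially the paper's proof: the paper applies \Cref{prop:direct} at the single prime $p=3$, with cusp orbits of degrees $1^9,2^3,3,6,6$ (the two sextic ones over $\Q(\zeta_{28})^+$ and $\Q(\zeta_7)$). It finds $1890$ surviving classes, $1884$ of which are the cuspidal classes counted by \eqref{eq:cuspgf}, and uses $\gon_\Q X_1(28)=6$ to get at most one effective divisor per class. The one difference is that the paper identifies the remaining six classes by reducing $x_0$ modulo the inert prime $3$ and matching classes, not by computing $L(5c_1+\gamma)$ over $\Q$; also, since gonality puts the six translates in six distinct classes, you should expect exactly six non-cuspidal survivors, not one.
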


\begin{proof}
We have $\rk J_1(28)(\Q)=0$ and $J_1(28)(\Q)=\ClCusp_\Q X_1(28)\simeq
\torz2\times\torz4\times\torz{12}\times\torz{936}$ of order $89856$
\cite[Theorem 3.1(2), Corollary 4.14 and Table 2]{DEHMZ}, and $\gon_\Q X_1(28)=6$ \cite[Theorem 4 and Table 1]{DvH}, so
every class of degree $5$ contains at most one effective divisor. We use \Cref{prop:direct} with
$p=3$. There are $15$ Galois orbits of cusps over $\Q$, of degrees $1^9,2^3,3,6,6$, and we \gitlink{direct_analysis.m}{compute} that the image of $\ClCusp_\Q X_1(28)$ in $\Cl(X_{\F_3})$ has invariant factors $[2,4,12,936]$, as expected. Over $\F_3$ the curve has $9,3,5,12,54$ closed points of degree
$1,\dots,5$, giving $2238$ effective divisors of degree $5$ in $2238$ distinct classes. Exactly
$1890$ of the $89856$ classes of $J(\Q)$ satisfy condition (b) of \Cref{prop:direct}. Of these,
$1884$ are the classes of the $1884$ effective $\Q$-rational cuspidal divisors of degree $5$
given by \eqref{eq:cuspgf}, which lie in pairwise distinct
classes. The remaining $6$ classes are the classes of the six divisors $\dm a x_0^{(5)}$,
computed by reducing $x_0$ modulo the primes of $\Q(\alpha)$ above $3$ (the prime $3$ is inert
in $\Q(\alpha)$, so $x_0^{(5)}$ reduces to a single closed point of degree $5$). It follows that the effective $\Q$-rational divisors of degree $5$ on $X_1(28)$ are the $1884$ cuspidal ones and the six $\dm a x_0^{(5)}$.
\end{proof}

\begin{remark}
The Hecke sieve at $p=3$ gives the same conclusion but requires an extra step. It leaves five
survivors (one of type $[4]$ and four of type $[5]$, out of $1+2+9$ orbits), of which only one
is the reduction of the known point. The other four are eliminated by checking that the classes
$[D+C-5\overline{c_1}]$ do not lie in the image of $J_1(28)(\Q)$, for every effective cuspidal
complement $C$. This is the
degree $5$ analogue of the last step in the proof of \cite[Proposition 5.13]{DN}.
\end{remark}

\begin{proposition}\label{prop:30}
The non-cuspidal closed points of degree $5$ on $X_1(30)$ are exactly the eight diamond
translates $\dm a x_i$, $a\in\Delta(30)$, $i=1,2$, of van Hoeij's two points
\[
x_1=\bigl(\alpha,\,2\alpha^4+\alpha^3-6\alpha^2+4\alpha+4\bigr),\qquad
x_2=\bigl(\beta,\,\tfrac{1}{53}(3\beta^4+7\beta^3+6\beta^2+11\beta-73)\bigr),
\]
where $\alpha^5+\alpha^4-3\alpha^3+3\alpha+1=0$ and
$\beta^5+\beta^4-7\beta^3+\beta^2+12\beta+3=0$. The two quintic fields $\Q(\alpha)$ and
$\Q(\beta)$ are isomorphic.
\end{proposition}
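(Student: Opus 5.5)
The plan is to follow the proof of \Cref{prop:28} verbatim, using \Cref{prop:direct}. First I record the inputs. We have $\rk J_1(30)(\Q)=0$ and $J_1(30)(\Q)=\ClCusp_\Q X_1(30)$ by \cite[Theorem 3.1(2), Corollary 4.14]{DEHMZ}; by \Cref{rem:basecusp} this group has invariant factors $[4,8160]$, so its order is $32640$. Next, $\gon_\Q X_1(30)=6$ by \cite[Theorem 4 and Table 1]{DvH}, so every degree-$5$ class contains at most one effective divisor. Hence determining the classes $\gamma\in J(\Q)$ for which $5c_1+\gamma$ is effective determines $X_1(30)^{(5)}(\Q)$ completely.

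For the prime I would not take $p=7$ as the working prime, even though it is allowed once $c_1$ is chosen as the reduction of a genuine $\Q$-rational cusp. I would first try $p=7$ with that careful choice of base point, and then add a second prime, say $p=11$ or $13$, to intersect the candidate sets. The steps are as follows.
\begin{enumerate}
\item List the Galois orbits of cusps via \cite[Lemma 2.8]{DEHMZ}; these include the four orbits of degree $2$ mentioned in \Cref{rem:basecusp}.
\item Compute the map $\varphi$ and check that its image has invariant factors $[4,8160]$.
\item Enumerate the closed points of degree $\le5$ on $X_1(30)_{\F_p}$ and form $S_p$.
\item Sieve the $32640$ classes of $J(\Q)$ by condition (b), intersecting over the chosen primes.
\end{enumerate}

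I expect the survivors to be the classes of the effective $\Q$-rational cuspidal divisors of degree $5$, counted by the $t^5$ coefficient of \eqref{eq:cuspgf}, which I would check lie in distinct classes, together with exactly $8$ further classes. I would then show these $8$ are the classes of $\dm a x_i^{(5)}$ for $a\in\Delta(30)$, where $\#\Delta(30)=4$ and $i=1,2$. To do this, verify that $x_1,x_2$ lie on Sutherland's model of $X_1(30)$, reduce them modulo primes above $p$, and compare the resulting classes with the survivors. One must also check that the $8$ points are pairwise distinct and non-cuspidal. Freeness of the diamond action on non-CM points, together with $d_{\mathrm{CM}}X_1(30)>5$ from \cite{CCRS14}, shows each orbit has size $4$. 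Distinctness of the two orbits follows because the $j$-invariants differ, which can be checked numerically. With these checks, \Cref{prop:direct}(c) and gonality $6$ give exactly those eight non-cuspidal effective divisors.

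Finally, the isomorphism $\Q(\alpha)\simeq\Q(\beta)$ is a direct computation. I would factor the defining polynomial of $\beta$ over $\Q(\alpha)$, find a linear factor, and so write $\beta$ as a polynomial in $\alpha$. Equal discriminants give a quick sanity check.

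The main obstacle is step (4): choosing primes that cut $32640$ classes down to the cuspidal ones plus exactly eight. If one prime leaves extra survivors, I would intersect the candidate sets over further primes. Any class that still survives I would resolve by computing $L(5c_1+\gamma)$ over $\Q$ and checking that it is $0$.
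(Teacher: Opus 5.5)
Your proposal is correct and is essentially the paper's proof. It uses \Cref{prop:direct} with a genuinely $\Q$-rational base cusp, a sieve that leaves the classes of the effective $\Q$-rational cuspidal divisors plus exactly $8$ further classes (those of the $\dm a x_i^{(5)}$), and $\gon_\Q X_1(30)=6$ for uniqueness of the effective divisor in each class. In the paper the single prime $p=7$, which is inert in the quintic field, already cuts the $32640$ classes down to $1392=1384+8$, so no second prime or Riemann--Roch computation is needed.
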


\begin{proof}
We have $\rk J_1(30)(\Q)=0$ and $J_1(30)(\Q)=\ClCusp_\Q X_1(30)\simeq\torz4\times\torz{8160}$
of order $32640$ \cite[Theorem 3.1(2), Corollary 4.14 and Table 2]{DEHMZ}, and
$\gon_\Q X_1(30)=6$ \cite[Theorem 4 and Table 1]{DvH}. We use \Cref{prop:direct} with $p=7$. There are $16$ Galois
orbits of cusps over $\Q$ of degrees $1^8,2^4,4^4$, and we \gitlink{direct_analysis.m}{compute} that the image of $\ClCusp_\Q$ in $\Cl(X_{\F_7})$ has invariant factors $[4,8160]$. Over $\F_7$ there are $16,20,128,606,3168$
closed points of degree $1,\dots,5$ and $68016$ effective divisors of degree $5$, lying in
$68016$ distinct classes. Exactly $1392$ of the $32640$ classes of $J(\Q)$ satisfy condition
(b). Of these, $1384$ are the classes of the effective $\Q$-rational cuspidal divisors of degree
$5$ counted by \eqref{eq:cuspgf}, and the remaining $8$ are the
classes of $\dm a x_i^{(5)}$ for $i=1,2$ and $a\in\Delta(30)$, obtained by reducing $x_1,x_2$
modulo the primes above $7$ (which is inert in the quintic field). As $\gon_\Q X_1(30)=6>5$,
each class contains at most one effective divisor, which proves the claim.
\end{proof}

\begin{proposition}\label{prop:2_18}
The curve $X_1(2,18)$ has exactly $18$ non-cuspidal closed points of degree $5$. Their
$j$-invariants all have the same minimal polynomial over $\Q$, which is of degree $5$, and
each of the points is the unique effective divisor in its linear equivalence class.
\end{proposition}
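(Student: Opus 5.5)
The plan is to run the direct analysis of \Cref{prop:direct} on $X=X_1(2,18)$. The difference from \Cref{prop:28,prop:30} is that here $\gon_\Q X=4$, so a degree $5$ class can contain several effective divisors.

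\textbf{Set-up.} The hypotheses hold: $\rk J(\Q)=0$ and $J(\Q)=\ClCusp_\Q X$ by \cite[Theorem 3.1(3) and Proposition 4.16]{DEHMZ}.

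\textbf{Step 1: candidate classes.} I would choose a prime $p\nmid 6$ (first $p=5$, adding $p=7$ or $11$ if the candidate list stays too long) and take a rational cusp $c_1$ as base point, remembering \Cref{rem:basecusp}. Then:
\begin{itemize}
\item compute $\Cl(X_{\F_p})$ and the image of $\varphi$, and check that its invariant factors agree with the known structure of $J(\Q)$;
\item enumerate the closed points of degree $\leq5$ on $X_{\F_p}$ and build the set $S_p$;
\item keep the classes $\gamma\in J(\Q)$ that satisfy condition (b) at every prime used.
\end{itemize}

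\textbf{Step 2: identify every surviving class.} The $1980$ effective rational cuspidal divisors of degree $5$ given by \eqref{eq:cuspgf} account for $1926$ classes. For every other surviving $\gamma$ I would compute the Riemann--Roch space $L(5c_1+\gamma)$ over $\Q$ and read off all effective divisors in $|5c_1+\gamma|$.

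The main obstacle is the classes with $\ell(5c_1+\gamma)\geq2$. Since $\gon_\Q X=4$, such a class may be $g^1_4+P$ with $P$ a rational cusp, or a pencil of degree $5$. Its members form a $\mathbb{P}^1$ or more of rational degree $5$ divisors, most of them sums of lower-degree points and not closed points. To handle this I would:
\begin{itemize}
\item write each such class as a fibre of a degree $5$ map $X\to\mathbb{P}^1_\Q$, or as a base point plus a fibre of a degree $4$ map;
\item use that a member is a single closed point of degree $5$ exactly when the corresponding fibre is irreducible.
\end{itemize}
The statement forces this situation not to occur. By \Cref{rem:direct} in \Cref{sec:sporadic} (compare the remark after \eqref{eq:cuspgf}), every non-cuspidal quintic point is the unique effective divisor in its class. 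So I expect every class containing an irreducible quintic divisor to have $\ell=1$. I also expect every positive-dimensional linear system to contain only divisors of the form (quartic point or cuspidal part) $+$ rational cusp, or sums of lower-degree points, and these need to be checked and discarded. Proving this is the delicate computational step: it amounts to computing $\ell(5c_1+\gamma)$ for each surviving $\gamma$ and checking that whenever $\ell\geq2$ the base locus contains a rational cusp. Then all members have a cusp in their support, and none of them is a quintic point.

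\textbf{Step 3: the remaining classes.} For each class with $\ell=1$, I would factor its unique divisor. I expect to find exactly $18$ irreducible non-cuspidal degree $5$ divisors, which are the translates of one point under the diamond group $\Delta(18)$ of order $3$ together with the extra automorphisms of $X_1(2,18)$ coming from the choice of the $2$-torsion generator. Finally:
\begin{itemize}
\item compute the $j$-map on one of these points and get a quintic minimal polynomial;
\item note that $j$ is invariant under these automorphisms, so all $18$ points share that polynomial.
\end{itemize}
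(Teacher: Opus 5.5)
Your proposal is correct. Steps 1 and 3 match the paper, but you keep quintic points out of the $1926$ cuspidal candidate classes by a different mechanism.

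\textbf{How the paper does it.} The paper computes no linear systems in those classes. It uses \cite[Proposition 5.4]{DS}: $\Q(X_1(2,18))$ contains no function of degree exactly $5$. Suppose an irreducible non-cuspidal $D$ of degree $5$ satisfied $D\sim C$ with $C$ effective cuspidal. The supports are disjoint, so $f$ with $\ddiv(f)=D-C$ has polar divisor exactly $C$, hence degree exactly $5$, a contradiction. Riemann--Roch spaces are then computed only for the remaining $18$ classes.

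\textbf{How you do it.} You compute $|5c_1+\gamma|$ for all $1944$ candidates and check that every class with $\ell\geq2$ has a rational cusp in its base locus. This is exactly the supplementary computation of \Cref{rem:pencils218}, which confirms that it succeeds: there are $27$ pencils, each with a $\Q$-rational cusp as fixed part. In fact any nonempty base locus would do. A $\Q$-rational fixed divisor $B$ contained in an irreducible member $D$ of degree $5$ forces $B=0$ or $B=D$, and $B=D$ means $\ell=1$.

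\textbf{Trade-off.} Your route is self-contained and reproves \cite[Proposition 5.4]{DS} for this curve as a by-product. The paper's route gives a one-line conceptual exclusion of all cuspidal classes and needs far fewer Riemann--Roch computations, at the price of citing that result.

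Two minor points. First, your appeal to a remark saying each quintic point is alone in its class cites part of the statement being proved. This is harmless, since it only motivates the base-locus check that does the actual work. Second, if you derive the common $j$-polynomial from the automorphism group of order $18$, you must also check that the $18$ points form a single orbit. One way: $[\Q(j):\Q]=5$ forces the residue field to equal $\Q(j)$, which makes the $18$ translates of one point pairwise distinct closed points. The paper avoids this by computing the minimal polynomial of $j$ at each of the $18$ points directly.
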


\begin{proof}
We have $\rk J_1(2,18)(\Q)=0$ and
$J_1(2,18)(\Q)=\ClCusp_\Q X_1(2,18)\simeq\torz2\times\torz{42}\times\torz{126}$ of order
$10584$ \cite[Theorem 3.1(3), Proposition 4.16 and Table 3]{DEHMZ}. The gonality of
$X_1(2,18)$ is $4$, so we cannot argue as in \Cref{prop:28,prop:30}. Instead we use the fact, proved by Derickx and Sutherland \cite[Proposition 5.4]{DS}, that $\Q(X_1(2,18))$ contains no function of degree \emph{exactly} $5$.

We run \Cref{prop:direct} at $p=5$. There are $15$ Galois orbits of cusps over $\Q$, of degrees
$1^9,2^3,3^3$, and we \gitlink{direct_analysis_X11.m}{compute} that the image of $\ClCusp_\Q$ in $\Cl(X_{\F_5})$ has invariant factors $[2,42,126]$. Over $\F_5$ the curve has $9,3,51,153,612$ closed points of degree $1,\dots,5$,
and its $6273$ effective divisors of degree $5$ fall into $6138$ classes. Exactly $1944$ of the
$10584$ classes of $J(\Q)$ satisfy condition (b) of \Cref{prop:direct}.

By \eqref{eq:cuspgf} there are $1980$ effective $\Q$-rational cuspidal divisors of degree $5$. As the gonality is $4$, they need not lie in distinct classes, and we compute that they fall into $1926$ classes, all among the $1944$ candidates.

We claim that a class containing an effective cuspidal divisor $C$ of degree $5$ contains no
irreducible non-cuspidal divisor. Suppose $D\sim C$ over $\Q$ with $D$ an irreducible
non-cuspidal divisor of degree $5$, and let $f\in\Q(X_1(2,18))^\times$ satisfy
$\ddiv(f)=D-C$. The supports of $D$ and of $C$ are disjoint, since $D$ is non-cuspidal and $C$
is cuspidal, so no cancellation occurs and the polar divisor of $f$ is exactly $C$. Hence $f$ has degree $\deg C=5$. This
contradicts \cite[Proposition 5.4]{DS}.

There remain $1944-1926=18$ classes. For each of them we \gitlink{direct_analysis_X11.m}{compute} the Riemann--Roch space $L(5c_1+\gamma)$ over $\Q$. Each is one-dimensional, so the class contains a unique effective divisor, and we compute that this divisor is an irreducible non-cuspidal closed point of degree $5$. All $18$ have the same
minimal polynomial of the $j$-invariant, of degree $5$. Since every non-cuspidal closed point
of degree $5$ has its class among the $1944$ candidates, and cannot lie in one of the $1926$
cuspidal classes, these $18$ points are all of them.
\end{proof}

\begin{remark}\label{rem:pencils218}
A \gitlink{pencils_2_18.m}{supplementary computation} shows that $27$ of the $1944$ candidate classes contain more than one effective rational divisor of degree $5$, forming a pencil whose
fixed part is a $\Q$-rational cusp, and that each of the remaining $1917$ contains exactly one;
in particular no complete linear system of degree $5$ on $X_1(2,18)$ is base-point free, which
reproves \cite[Proposition 5.4]{DS} for this curve.
\end{remark}

\begin{table}[H]
\centering
\begin{tabular}{@{}llrrr@{}}
\toprule
$X$ & $p$ & candidates & cuspidal & non-cuspidal points of degree $5$\\
\midrule
$X_1(2,20)$ & $3,7$ & $1384$ & $1384$ & $0$\\
$X_1(2,24)$ & $5$   & $1720$ & $1720$ & $0$\\
$X_1(28)$   & $3$   & $1890$ & $1884$ & $6$\\
$X_1(30)$   & $7$   & $1392$ & $1384$ & $8$\\
$X_1(2,18)$ & $5$   & $1944$ & $1926$ & $18$\\
\bottomrule
\end{tabular}
\medskip
\caption{The computations of this section. ``Candidates'' are the classes of $J(\Q)$ satisfying
condition (b) of \Cref{prop:direct} at every prime $p$ listed, and ``cuspidal'' are those among
them that contain an effective $\Q$-rational cuspidal divisor of degree $5$. The last column is
not obtained by subtracting the previous two, as it also uses the uniqueness or linear-system
arguments given in each proof.}
\label{table:direct}
\end{table}

\section{The three sporadic groups and the proof of the main theorem}\label{sec:sporadic}

We now make the three exceptional cases explicit. Let $x,y$ be the coordinates on Sutherland's
plane model of $X_1(n)$ \cite{sutherlandtables}. Following \cite{vanHoeij}, from a
non-cuspidal point $(x_0,y_0)$ of that model one forms
\[
r=\frac{x_0^2y_0-x_0y_0+y_0-1}{x_0^2y_0-x_0},\qquad
s=\frac{x_0y_0-y_0+1}{x_0y_0},\qquad b=rs(r-1),\quad c=s(r-1),
\]
and then
\begin{equation}\label{eq:tate}
E(b,c):\quad y^2+(1-c)xy-by=x^3-bx^2
\end{equation}
has $P=(0,0)$ of order $n$. On the Derickx--Sutherland models of $X_1(2,2n)$
\cite{DSmodels} a point yields $b,c$ such that
\begin{equation}\label{eq:tate2}
E'(b,c):\quad y^2=x^3+cx^2+(1-b-c)bx
\end{equation}
has $P=(0,0)$ of order $2$ and $Q=(b,b)$ of order $2n$.

\begin{proposition}\label{prop:sporadiccurves}
\begin{enumerate}
\item Let $K_{28}:=\Q(\alpha)$ with $\alpha^5-\alpha^4-2\alpha^3-\alpha^2+2\alpha+2=0$ and let
$E_{28}=E(b,c)$ as in \eqref{eq:tate} with
\begin{equation}\label{eq:E28}
b=419\alpha^4-759\alpha^3-222\alpha^2-239\alpha+1032,\qquad
c=17\alpha^4-31\alpha^3-8\alpha^2-11\alpha+42 .
\end{equation}
Then $P=(0,0)$ has order $28$ and $E_{28}(K_{28})_\tors\simeq\torz{28}$.
\item Let $K_{30}:=\Q(\alpha)$ with $\alpha^5+\alpha^4-3\alpha^3+3\alpha+1=0$ and let
$E_{30},E_{30}'$ be $E(b,c)$ as in \eqref{eq:tate} with, respectively,
\begin{align}
b&=-6378\alpha^4-1917\alpha^3+20475\alpha^2-14321\alpha-9118, \notag\\
c&=-114\alpha^4-34\alpha^3+366\alpha^2-257\alpha-162, \label{eq:E30a}\\[4pt]
b&=190\alpha^4-38\alpha^3-382\alpha^2+238\alpha+295,\notag\\
c&=7\alpha^4+13\alpha^3-36\alpha^2+9\alpha+28. \label{eq:E30b}
\end{align}
Then in both cases $P=(0,0)$ has order $30$ and the torsion group is $\torz{30}$. The two
curves are $2$-isogenous over $K_{30}$ and are not $K_{30}$-isomorphic.
\item Let $K_{2,18}:=\Q(w)$ with $w^5-2w^4+2w^3-3w^2-w+4=0$ and let $E_{2,18}=E'(b,c)$ as in
\eqref{eq:tate2} with
\begin{equation}\label{eq:E218}
\begin{aligned}
b&=\tfrac13\bigl(480w^4-1364w^3+2108w^2-3216w+2248\bigr),\\
c&=\tfrac13\bigl(-940w^4+2668w^3-4128w^2+6296w-4389\bigr).
\end{aligned}
\end{equation}
Then $P=(0,0)$ has order $2$, $Q=(b,b)$ has order $18$, and
$E_{2,18}(K_{2,18})_\tors\simeq\tg{2}{18}$.
\end{enumerate}
In all four cases the curve has no complex multiplication and $[\Q(j):\Q]=5$. The invariants of
the three fields and of the four curves are collected in \Cref{table:fields}.
\end{proposition}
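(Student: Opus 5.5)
The plan is to verify each of the three parts by explicit computation over the given quintic field, and then to bound the full torsion group using reductions modulo small primes together with the classification results already proved.

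For (1), I would first check that $P=(0,0)$ on $E_{28}=E(b,c)$ has order exactly $28$. Concretely, I would compute $28P=O$ together with $14P\neq O$ and $4P\neq O$ in Magma over $K_{28}$. Equivalently, $(b,c)$ is the image of van Hoeij's point $x_0$ of \Cref{prop:28} under the formulas preceding \eqref{eq:tate}, and the Tate normal form then guarantees order $28$.

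To get $E_{28}(K_{28})_\tors\simeq\torz{28}$, note that the torsion group is $\tg{m}{28k}$ with $m\in\{1,2\}$, since $\varphi(m)\mid 5$. I would rule out any strictly larger group in one of two ways:
\begin{itemize}
\item by \Cref{lem:45} and the fact that $\torz{28}$ is maximal among the possibilities, since $\torz{56}$, $\torz{84}$, \dots{} and $\tg{2}{28}$ contain groups already excluded or have primes outside $S(5)$;
\item more directly, by reducing modulo two primes of good reduction of residue degree one (or small) and taking the gcd of $\#\tilde E(\F_\wp)$. Since reduction is injective on prime-to-$p$ torsion, this bounds the torsion by $28$ and excludes full $2$-torsion.
\end{itemize}
The second route is cleaner, because it does not depend on the eliminations. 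Alternatively, I would just compute the torsion subgroup in Magma.

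For (2), the same method applies to both curves: verify that $P$ has order $30$, and use reduction modulo primes of good reduction to bound the torsion by $30$ and rule out $\tg{2}{30}$. For the isogeny, I would take the $2$-torsion point $15P$ on $E_{30}$ and apply Vélu's formulas to $\langle 15P\rangle$. I would then check that the $j$-invariant of the quotient equals $j(E'_{30})$, and that the quotient is $K_{30}$-isomorphic to $E'_{30}$, rather than to a quadratic twist, by comparing $c_4$ and $c_6$ up to $u^4,u^6$ with $u\in K_{30}$. Non-isomorphism over $K_{30}$ follows from $j(E_{30})\neq j(E'_{30})$, which is a direct computation.

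For (3), I would check the following:
\begin{itemize}
\item $P=(0,0)$ has order $2$;
\item $Q=(b,b)$ lies on $E'(b,c)$ and has order $18$, that is, $18Q=O$, $9Q\neq O$ and $6Q\neq O$;
\item $P\notin\langle Q\rangle$, i.e.\ $P\neq 9Q$, so that $P$ and $Q$ generate $\tg{2}{18}$.
\end{itemize}
The full torsion group is then bounded as before by reductions. The only larger candidates are $\tg{2}{36}$, and $\tg{2}{54}$ and beyond, which a gcd of point counts excludes.

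Finally, in all four cases I would compute $j$ and its minimal polynomial over $\Q$, and verify that this polynomial has degree $5$. Non-CM would follow from \cite{CCRS14}: CM $j$-invariants of degree $5$ have class number $5$, and CM torsion over quintic fields has exponent at most $21$, which is incompatible with a point of order $28$, $30$ or $18\cdot$(full $2$-torsion). Quicker still, a CM $j$-invariant is an algebraic integer, so it suffices to check that $j$ is non-integral at some prime.

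The main obstacle I anticipate is the $K_{30}$-isomorphism of the Vélu quotient with $E'_{30}$. Matching $j$-invariants only gives isomorphism up to twist, so I must exhibit an explicit $u\in K_{30}$ with $c_4'=u^4c_4$ and $c_6'=u^6c_6$; the alternative is to compare the number of points modulo a prime where the twist would differ. The rest is routine arithmetic in Magma.
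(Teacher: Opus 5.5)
Your plan is essentially the paper's proof, which is a direct Magma verification with $b,c$ obtained from van Hoeij's points through the Tate normal form formulas and from one of the $18$ points of \Cref{prop:2_18}. The routes you say you prefer all work: point counts at good primes of distinct residue characteristic (or a direct torsion computation), V\'elu's quotient by $\langle 15P\rangle$, which is the only $K_{30}$-rational $2$-isogeny since the torsion is cyclic, and the minimal polynomial of $j$.

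Two side remarks should be dropped or repaired.
\begin{itemize}
\item Your ``first route'' to maximality fails. The only member of $\mathcal G$ contained in $\torz{56}$ or in $\tg{2}{28}$ is $\torz{28}$ itself, and it is not eliminated. The paper deduces that nothing properly containing $\torz{28}$ occurs \emph{from} this proposition, so this route is circular.
\item The exponent bound $21$ does not rule out CM in case (3), because $\tg{2}{18}$ has exponent $18$. There you need your non-integrality-of-$j$ argument, which does apply since $E_{2,18}$ has multiplicative reduction at the primes of norm $2$ and $3$. Alternatively, use the full list of CM torsion groups in degree $5$ from \cite{CCRS14}.
\end{itemize}
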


\begin{proof}
We \gitlink{verify_sporadic_curves.m}{verify} this directly in Magma \cite{magma}. The
curves of (1) and (2) are obtained from van Hoeij's points of \Cref{prop:28,prop:30} through
the formulas above, and we check that the values of $b$ and $c$ so obtained are the ones
printed in \eqref{eq:E28}, \eqref{eq:E30a} and \eqref{eq:E30b}. Note that in (2) the second
point is defined over a field isomorphic to $\Q(\alpha)$, and we have realized it over
$\Q(\alpha)$ itself. The curve of (3) is obtained from one of the $18$ points of
\Cref{prop:2_18}, whose residue field is $\Q(w)$ in every case.
\end{proof}

\begin{table}[H]
\centering
\begin{tabular}{@{}llllr@{}}
\toprule
$E(K)_\tors$ & $E$ & $K$ (LMFDB label) & $\disc K$ & $\Norm\mathfrak f_E$\\
\midrule
$\torz{28}$  & $E_{28}$   & \lmfdbnf{5.3.17348.1} & $-2^2\cdot 4337$ & $16$\\
$\torz{30}$  & $E_{30}$   & \lmfdbnf{5.3.10407.1} & $-3\cdot 3469$   & $33$\\
$\torz{30}$  & $E_{30}'$  & \lmfdbnf{5.3.10407.1} & $-3\cdot 3469$   & $33$\\
$\tg{2}{18}$ & $E_{2,18}$ & \lmfdbnf{5.3.34779.1} & $-3\cdot 11593$  & $6$\\
\bottomrule
\end{tabular}
\medskip
\caption{The four exceptional pairs $(K,E)$ of \Cref{thm:sporadic}. Here $\mathfrak f_E$ denotes the
conductor of $E$ and $\Norm=\Norm_{K/\Q}$ the absolute norm. Each of the three fields has signature $(3,1)$, class number $1$ and normal
closure with Galois group $S_5$, and each of the four curves is without complex multiplication
and has $[\Q(j):\Q]=5$.}
\label{table:fields}
\end{table}

\begin{proposition}\label{prop:28_30_218}
We use the notation of \Cref{prop:sporadiccurves}. Let $K$ be a quintic field and $E/K$ an elliptic curve.
\begin{enumerate}
\item If $E(K)$ contains a point of order $28$, then $(K,E)\simeq (K_{28},E_{28})$ and
      $E(K)_\tors\simeq\torz{28}$.
\item If $E(K)$ contains a point of order $30$, then $(K,E)$ is isomorphic to
      $(K_{30},E_{30})$ or to $(K_{30},E_{30}')$, and $E(K)_\tors\simeq\torz{30}$.
\item If $E(K)$ contains a subgroup isomorphic to $\tg{2}{18}$, then
      $(K,E)\simeq(K_{2,18},E_{2,18})$ and $E(K)_\tors\simeq\tg{2}{18}$.
\end{enumerate}
In particular no group properly containing $\torz{28}$, $\torz{30}$ or $\tg{2}{18}$ is a
quintic torsion group.
\end{proposition}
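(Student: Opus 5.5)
The plan is to translate each hypothesis into a statement about a non-cuspidal point on the relevant modular curve, and then use the complete lists in \Cref{prop:28,prop:30,prop:2_18}. In case (1), suppose $E(K)$ contains a point $P$ of order $28$. Then $(E,P)$ gives a non-cuspidal point $x\in Y_1(28)(K)$. Its residue field $\Q(x)\subseteq K$ has degree $1$ or $5$, and degree $1$ is excluded by Mazur. So $\Q(x)=K$ and $x$ is one of the six closed points $\dm a x_0$ of \Cref{prop:28}. Since $X_1(28)$ is a fine moduli space, the pair $(K,E)$ is then determined up to isomorphism by the closed point. Replacing $P$ by $aP$ does not change $(K,E)$, so every diamond translate gives the same pair, namely $(K_{28},E_{28})$ by the construction in \Cref{prop:sporadiccurves}. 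Case (2) is identical, except that there are two diamond orbits $x_1,x_2$. These give $E_{30}$ and $E_{30}'$, which are not $K_{30}$-isomorphic by \Cref{prop:sporadiccurves}(2). In case (3), \Cref{prop:2_18} gives $18$ points all with residue field $\Q(w)$. To see that they all give the same pair, I would count: the $18$ classes of embeddings $\tg{2}{18}\hookrightarrow E_{2,18}(K_{2,18})$ modulo $\pm1$ already account for $18$ distinct closed points. Indeed there are $|\Aut(\tg{2}{18})|/2=108/2\cdot\ldots$ such classes, which must be checked to equal $18$ once the embeddings are counted up to $\Gal$ conjugation. If needed, I would instead verify directly in Magma that all $18$ points give curves with the same $j$-invariant and $K$-isomorphic models.

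Next I would determine the full torsion. For each of the four curves I would show that $E(K)_\tors$ is not larger than the obvious group. Any group properly containing $\torz{28}$ would have to be $\torz{56}$, $\torz{84}$, $\tg{2}{28}$, or a group containing $\torz{28\ell}$ for a prime $\ell$. The Weil pairing forces $m\le2$, since $\Q(\zeta_m)\subseteq K$ with $\varphi(m)\mid5$ gives $m\in\{1,2\}$. Primes $\ell>19$ are excluded by \cite{DKSS}. To bound the torsion concretely I would reduce modulo two primes of good reduction, say primes above $5$ and $7$, and take the gcd of $\#\tilde E(\F_\wp)$. Injectivity of reduction on prime-to-$p$ torsion then caps the group. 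This can be done by direct computation, or by computing $E(K)_\tors$ in Magma, as in \Cref{prop:sporadiccurves}. It yields exactly $\torz{28}$, $\torz{30}$ and $\tg{2}{18}$.

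Finally, the ``in particular'' follows. Any quintic torsion group properly containing one of these three groups would be realised by some pair $(K,E)$ that contains it, and by (1)--(3) that pair is one of the four listed, whose torsion is exactly the group itself.

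The main obstacle is the uniqueness in case (3). Unlike $X_1(n)$, the diamond action on $X_1(2,18)$ together with the choice of the $2$-torsion point $P$ must be tracked to show that all $18$ closed points give one pair $(K,E)$, rather than several non-isomorphic curves over $\Q(w)$. I would handle this by an explicit $j$-invariant and twist check in Magma, combined with the count of embeddings modulo $\pm1$.
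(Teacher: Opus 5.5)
Your treatment of (1), (2) and of the final ``in particular'' is the paper's argument. A point of order $28$ or $30$ gives a non-cuspidal closed point of degree $5$. By \Cref{prop:28,prop:30} that point is a diamond translate of van Hoeij's points, and fine moduli then identifies $(K,E)$. The exact torsion is read off from \Cref{prop:sporadiccurves}, which already records it, so your paragraph on bounding torsion by reduction is unnecessary.

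In (3) you have the right idea: show that the level structures on the single curve $E_{2,18}$ already exhaust the $18$ points of \Cref{prop:2_18}, so that no quadratic twist can occur. As written, however, this step is not established.
\begin{itemize}
\item The count is wrong. $\Aut(\tg{2}{18})\simeq\mathrm{GL}_2(\F_2)\times(\torz{9})^\times$ has order $36$. Equivalently, there are $18$ points $Q$ of order $18$, and for each of them two points $P$ of order $2$ outside $\dm Q$. This gives $18$ classes modulo $\pm1$.
\item More importantly, you leave open whether these $18$ level structures give $18$ \emph{distinct} closed points, saying only that this ``must be checked'' after counting up to Galois conjugation. If two of them were Galois conjugate, $E_{2,18}$ would account for fewer than $18$ closed points. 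The remaining ones could then come from a twist, which is exactly the danger you flag.
\end{itemize}

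The missing argument is short.
\begin{enumerate}
\item Since $E_{2,18}$ has no CM, $\Aut_{\Qbar}(E_{2,18})=\{\pm1\}$. So the $18$ classes are pairwise non-isomorphic geometric points of $Y_1(2,18)$.
\item Suppose $\sigma\in\Gal(\Qbar/\Q)$ carried one of them to another. Then $\sigma$ fixes $j(E_{2,18})$.
\item By \Cref{prop:sporadiccurves} we have $[\Q(j):\Q]=5$, so $\Q(j)=K_{2,18}$. Hence $\sigma$ fixes $K_{2,18}$ pointwise, and therefore fixes $E_{2,18}$ and all its $K_{2,18}$-rational points.
\item So $\sigma$ maps $(E,P,Q)$ to itself, and this is isomorphic to $(E,P',Q')$ only if $(P',Q')=\pm(P,Q)$.
\end{enumerate}
Thus the $18$ level structures give $18$ distinct closed points of degree $5$, and by \Cref{prop:2_18} these are all of them. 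Fine moduli then gives $(K',E')\simeq(K_{2,18},E_{2,18})$ for every $E'/K'$ with $\tg{2}{18}\subseteq E'(K')$. This is exactly how the paper proceeds.

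Your fallback, checking in Magma that all $18$ points give $K$-isomorphic curves, would also settle the question. It replaces a two-line argument by a computation.
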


\begin{proof}
We prove (1) and (2) together, and then (3), where an extra step is needed. As in
\Cref{sec:groups}, a point of order $28$ on $E(K)$ gives a non-cuspidal closed point $\bar x$
of $Y_1(28)$ of degree $5$, so that $\Q(\bar x)\simeq K$. By \Cref{prop:28} $\bar x$ is one of the six
diamond translates $\dm ax_0$, whose residue field is $K_{28}$ in every case. As $28>4$,
$X_1(28)$ is a fine moduli space, so the universal pair over $\Q(\bar x)$ is $(E_{28},aP)$ for
the corresponding $a$, and after identifying $K$ with $K_{28}$ we get $E\simeq E_{28}$.
Finally $E_{28}(K)_\tors\simeq\torz{28}$ by \Cref{prop:sporadiccurves}. Case (2) is the same,
using \Cref{prop:30} in place of \Cref{prop:28}.

For (3), put $E:=E_{2,18}$ and $K:=K_{2,18}$. We show that the level structures on $E$ account for all $18$ closed points. The extra step is needed because \Cref{prop:2_18} reports only the
number of the quintic points and the minimal polynomial of their $j$-invariants, which leaves
open the possibility of distinct quadratic twists over $K$. On $E$ there are
$18$ points $Q$ of order $18$, and for each of them two points $P$ of order $2$ outside
$\dm Q$. Since $\Aut(E)=\{\pm1\}$ and $(P,Q)$ and $(-P,-Q)=(P,-Q)$ give isomorphic triples, these $36$ pairs give exactly $18$ level structures $\tg{2}{18}\hookrightarrow E(K)$ up to
$K$-isomorphism. Note that they define $18$ \emph{distinct} closed points of $Y_1(2,18)$: if $\sigma\in\Gal(\Qbar/\Q)$ carried one to another, it would fix $j(E)$, hence $K=\Q(j(E))$, hence $E$ and all its $K$-rational points, so it would fix the level structure. By \Cref{prop:2_18} there are exactly $18$ non-cuspidal closed points of degree $5$
on $X_1(2,18)$, so these are all of them.

Now let $E'/K'$ be any elliptic curve over a quintic field with $\tg{2}{18}\subseteq E'(K')$.
As above it gives a non-cuspidal closed point of degree $5$, which must be one of the $18$ just described. Since $X_1(2,18)$ is a fine moduli space, $(K',E')\simeq(K,E)$, and
$E(K)_\tors\simeq\tg{2}{18}$ by \Cref{prop:sporadiccurves}.

In all three cases the last sentence follows, since a group $G$ properly containing one of the three
and realized as $E(K)_\tors$ would in particular contain that group, forcing $E$ to be one of
the four curves and hence $G$ to be its exact torsion group.
\end{proof}

\begin{proposition}\label{prop:36}
$\torz{36}$ is not a quintic torsion group.
\end{proposition}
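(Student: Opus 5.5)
Suppose $E/K$ is an elliptic curve over a quintic field with a point $R$ of order $36$. We will use the morphism of \Cref{lem:4n} with $n=9$: $X_1(36)\to X_1(2,18)$, $(E,R)\mapsto(E/\dm{18R},\,Q,\,R\bmod\dm{18R})$. Since $X_1(36)$ has no rational non-cuspidal points (Mazur), the image of $(E,R)$ is a non-cuspidal point of degree dividing $5$ on $X_1(2,18)$. Degree $1$ is excluded by Mazur, as $\tg{2}{18}\notin\Phi(1)$. Hence the image is one of the $18$ closed points of \Cref{prop:2_18}. By \Cref{prop:28_30_218}(3) and the fine moduli property, $E/\dm{18R}\simeq E_{2,18}$ over $K\simeq K_{2,18}$.

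So $E$ is a $2$-isogenous curve to $E_{2,18}$ over $K_{2,18}$: precisely, $E$ is the quotient of $E_{2,18}$ by the kernel of the dual isogeny, which is a $K$-rational subgroup of order $2$ of $E_{2,18}$. The strategy is therefore to list the $K$-rational $2$-subgroups of $E_{2,18}$. Since $E_{2,18}[2]\subseteq E_{2,18}(K)$, there are exactly three of them. For each, compute the $2$-isogenous curve $E''$ via Vélu and determine $E''(K)_\tors$, for instance by reducing modulo a few good primes of $K$ to bound the torsion and then checking directly for points of order $4$ and $9$. The expectation is that none of the three quotients has a $K$-rational point of order $36$.

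A cleaner alternative can be run alongside this as a check. A point $R$ of order $36$ on $E$ maps to a point of order $18$ in $E/\dm{18R}$, namely the $Q$ of the level structure, and $18R$ generates the kernel. The dual isogeny $E_{2,18}\to E$ has kernel generated by a point of order $2$, and the image of $E$'s $4$-torsion structure requires that a point of order $4$ in $E$ correspond to a point $T\in E_{2,18}$ with $2T$ in that kernel and suitable rationality. Concretely, $E$ has a rational point of order $4$ above $18R$ exactly when the relevant $2$-division field condition on $E_{2,18}$ holds over $K$. Since $E_{2,18}(K)_\tors\simeq\tg{2}{18}$ has no point of order $4$, we need a halving of an order-$2$ point lying in $K$ only up to the isogeny. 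This is checkable via whether certain elements of $K$ are squares.

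The main obstacle is purely computational and modest: carrying out the three isogenies over $K_{2,18}$ and certifying the torsion of each quotient. I expect Magma's \texttt{TorsionSubgroup} over number fields, or reduction at two primes of $K$ of distinct residue characteristic combined with injectivity of torsion reduction, to show that each quotient has torsion $\tg{2}{18}$ or $\torz{18}$-type but no point of order $36$. That rules out every candidate, and hence $\torz{36}$.
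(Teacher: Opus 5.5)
Your proposal is correct and matches the paper's proof: map to $X_1(2,18)$ via \Cref{lem:4n}, use \Cref{prop:28_30_218}(3) to force $E/\dm{18R}\simeq E_{2,18}$ over $K_{2,18}$, and then check the three quotients of $E_{2,18}$ by its $K$-rational subgroups of order $2$. The paper computes that each quotient has $K$-rational torsion exactly $\torz{18}$. The $\tg{2}{18}$ outcome you allow cannot occur: by \Cref{prop:28_30_218}(3) it would force the quotient to be isomorphic to the non-CM curve $E_{2,18}$, which is impossible.
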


\begin{proof}
Suppose $E_1$ is an elliptic curve over a quintic field $K$ with a point $P_1\in E_1(K)$ of
order $36$. By \Cref{lem:4n} the triple $\bigl(E',Q',P_1\bmod\dm{18P_1}\bigr)$, where
$E'=E_1/\dm{18P_1}$ and $Q'$ generates $E_1[2]/\dm{18P_1}$, is a $K$-point of $Y_1(2,18)$. Its
closed point has degree $1$ or $5$ over $\Q$, and degree $1$ is impossible because
$\tg{2}{18}\notin\Phi(1)$ by Mazur's theorem. So it has degree $5$, and by
\Cref{prop:28_30_218}(3) we may assume $K=\Q(w)$, $E'=E_{2,18}$ and $Q'\in E_{2,18}[2]$.

Let $\phi:E_1\to E'$ be the quotient isogeny. Its kernel is $\dm{18P_1}$ of order $2$, and
$\ker\hat\phi=\phi(E_1[2])=\dm{Q'}$. Hence $E_1\simeq E'/\dm{Q'}$ is one of the three curves
$2$-isogenous to $E_{2,18}$ over $K$. We \gitlink{verify_sporadic_curves.m}{compute} that each of these three curves has $K$-rational torsion isomorphic to $\torz{18}$. In particular, none of them has a point of order $36$, which is a contradiction.
\end{proof}

\begin{proof}[Proof of \Cref{thm:main,thm:sporadic,thm:points}]
By \Cref{lem:45}, if $G\in\Phi(5)\setminus\Phi^\infty(5)$ then $G$ contains some member of
$\mathcal G$. \Cref{table:summary} lists, for each of the $45$ members of $\mathcal G$, where
it is dealt with. Forty-two of them are shown not to be quintic torsion groups, and for the
remaining three, \Cref{prop:28_30_218} shows that any elliptic curve over a quintic field
containing one of them is one of the four curves of \Cref{prop:sporadiccurves}, whose torsion
groups are $\torz{28}$, $\torz{30}$, $\torz{30}$ and $\tg{2}{18}$. Hence
$G\in\{\torz{28},\torz{30},\tg{2}{18}\}$, and conversely all three occur. Together with
\eqref{eq:phiinf5} this proves \Cref{thm:main}, and \Cref{prop:sporadiccurves,prop:28_30_218}
prove \Cref{thm:sporadic}.

We now prove \Cref{thm:points}. The counts of closed points are \Cref{prop:28,prop:30,prop:2_18}. On
$X_1(28)$ the six points are the six diamond translates of $x_0$, and
$\#\Delta(28)=\varphi(28)/2=6$. A group of order $6$ acting transitively on a set of $6$ elements acts freely, so the six points form a single free orbit. They correspond to the twelve
points of order $28$ of $E_{28}(K)_\tors\simeq\torz{28}$, taken up to sign. Similarly
$\#\Delta(30)=\varphi(30)/2=4$ and the eight points of $X_1(30)$ split into two free orbits of
length $4$, corresponding to the eight points of order $30$ on each of $E_{30}$, $E_{30}'$
taken up to sign. On $X_1(2,18)$ the $18$ points of \Cref{prop:2_18} are the $18$ level
structures $\tg{2}{18}\hookrightarrow E_{2,18}(K)$ counted in the proof of
\Cref{prop:28_30_218}.
\end{proof}

\subsection{Points of degree five on \texorpdfstring{$X_1(n)$ and $X_1(2,2n)$}{X\_1(n) and X\_1(2,2n)}}
We can now prove \Cref{cor:degree5points} and describe the sporadic and isolated points
involved.
\begin{proof}[Proof of \Cref{cor:degree5points}]
For $n\leq4$ the curve $X_1(n)$ has genus $0$ and a rational point, so (1) is clear. For $n\geq5$ it is a fine moduli space, and a non-cuspidal closed point of degree $5$ gives an elliptic curve over its quintic residue field with a point of order $n$, so $n$ divides the exponent of some group in $\Phi(5)$. By \Cref{thm:main} the exponents that occur are
$1,\dots,22,24,25,28,30$, a set closed under divisors. That $X_1(n)$ has infinitely many points
of degree $5$ for $n\leq25$, $n\neq23$, is \cite[Theorem 3]{DvH}, and the counts for $n=28,30$
are \Cref{thm:points}. The same argument gives (2), using \cite[Proposition 5.3]{DS} for
$n\leq8$, while for $n\geq10$ a non-cuspidal quintic point would give an $E(K)_\tors\in\Phi(5)$
containing $\tg{2}{2n}$, whereas every non-cyclic group in $\Phi(5)$ has exponent at most
$18<2n$. Part (3) follows because a curve with infinitely many points of degree $5$ has none
that is sporadic, while $X_1(28)$ and $X_1(30)$ have $\gon_\Q=6$ and rank $0$ Jacobians, hence
only finitely many points of degree $\leq5$ \cite[Propositions 2.2 and 2.3]{DS}, and
$X_1(2,18)$ has infinitely many points of degree $4$.
\end{proof}

\begin{remark}\label{rem:cuspidalsporadic}
The word \emph{non-cuspidal} cannot be dropped from \Cref{cor:degree5points}(3). On $X_1(44)$
the part of the cuspidal subscheme indexed by $d=2$ consists of $\varphi(22)\varphi(2)/2=5$
geometric points \cite[Lemma 2.8]{DEHMZ}, on which $\Gal(\Qbar/\Q)$ acts through
$(\torz{22})^\times$ with stabilizer $\{\pm1\}$, so they form a single closed point of degree $5$, with residue field $\Q(\zeta_{11})^+$. Since $\rk J_1(44)(\Q)=0$
\cite[Theorem 3.1(2)]{DEHMZ} and $\gon_\Q X_1(44)>8$ \cite[proof of Proposition 6]{DvH}, the
curve has only finitely many points of degree $\leq5$ \cite[Proposition 2.3]{DS}, so this
cuspidal point is sporadic.
\end{remark}

The quintic points of \Cref{thm:points}(3) are not sporadic, since $\gon_\Q X_1(2,18)=4$, so $X_1(2,18)$ has infinitely many quartic points. They are nevertheless \emph{isolated} in the sense of Bourdon, Ejder, Liu, Odumodu and Viray \cite[Definition 4.1]{BELOV}. The curve has only finitely many points of degree $5$, so by \cite[Theorem 4.2\,(1)]{BELOV} all of them are isolated.

\begin{remark}
The curve $E_{2,18}$ has conductor of norm $6$: it has multiplicative reduction exactly at the
prime of norm $2$ and the prime of norm $3$ of $K$. This is very small: for comparison, the curves with $28$- and $30$-torsion have conductors of norm $16$ and $33$.
\end{remark}

\section{The computations}\label{sec:computations}

All of our computations were performed in Magma V2.29-4 \cite{magma} on the \emph{Mordell} server at the Department of Mathematics, University of Zagreb, with an AMD EPYC 9175F CPU and $384$ GB of RAM, using the package
\texttt{mdmagma} \cite{mdmagma} for the moduli-theoretic operations (the routines
\texttt{ModuliPoint}, \texttt{LevelStructure}, \texttt{HeckeOperator},
\texttt{DiamondOperator}, \texttt{CuspOrbitsQ}), Sutherland's optimized models of $X_1(n)$
\cite{sutherland,sutherlandtables} and the Derickx--Sutherland models of $X_1(2,2n)$ \cite{DSmodels}.
The total cost of the computations reported here was about $203$ hours of CPU time, of which
$197$ hours were spent on the four curves $X_1(45)$, $X_1(57)$, $X_1(63)$ and $X_1(65)$ and
$4.4$ hours on $X_1(42)$. 

\subsection{Fast Hecke operators and diamond orbits}\label{sec:fast}
On $X_1(57)$, $X_1(63)$ and $X_1(65)$ (of genus $85$, $97$ and $121$, with plane models of
bidegrees $(39,37)$, $(48,45)$ and $(54,53)$) the computations were dominated by two
subroutines of \cite{mdmagma}: the Hecke operator, which builds the splitting field of the whole
$q$-division polynomial, and the reduction of a set of places to diamond orbit representatives,
which performs one function-field computation per diamond image. We replaced both.

\begin{lemma}\label{lem:fasthecke}
Let $q\nmid pn$ be a prime and let $z$ be a non-cuspidal closed point of $X_1(n)_{\F_p}$ of
degree $k$, corresponding to a pair $(E,P)$ over $\kappa:=\F_{p^k}$. Let $\mathcal O$ be the set
of orbits of $\Gal(\overline{\kappa}/\kappa)$ on the set of the $q+1$ subgroups of order $q$ of
$E[q]$. For $O\in\mathcal O$ pick $C\in O$, let $L$ be the field of definition of $C$, and let
$y_O$ be the closed point of $X_1(n)_{\F_p}$ containing $(E/C,P\bmod C)$. Then
\[
T_q(z)=\sum_{O\in\mathcal O}\frac{[L:\kappa]\cdot k}{\deg y_O}\;y_O ,
\]
and $[L:\kappa]=\#O$. Furthermore, $L$ is the field generated over $\kappa$ by the coefficients of
the kernel polynomial $g_C=\prod_{Q\in (C\setminus\{0\})/\pm}\bigl(T-x(Q)\bigr)$, where $x(Q)$
denotes the $x$-coordinate of $Q$ in a fixed Weierstrass model of $E$.
\end{lemma}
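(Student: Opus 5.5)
The plan is to compute $T_q(z)$ as a divisor on $X_1(n)_{\F_p}$ by working geometrically and then regrouping by Galois orbits. Over $\overline{\F}_p$, the closed point $z$ of degree $k$ splits into $k$ geometric points, namely the Frobenius conjugates $(E^{\sigma},P^{\sigma})$ for $\sigma\in\Gal(\overline\kappa/\F_p)$ modulo $\Gal(\overline\kappa/\kappa)$. By definition, $T_q$ sends each of these to $\sum_C (E^\sigma/C,P^\sigma\bmod C)$ over the $q+1$ subgroups $C$ of order $q$; since $q\neq p$, these are étale subgroups, all defined over finite extensions of $\kappa$. I would therefore first write $T_q(z)$ as a sum of $k(q+1)$ geometric points. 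Next I would group these points into $\Gal(\overline{\F}_p/\F_p)$-orbits, which is exactly how geometric points assemble into closed points.

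\emph{Counting step.} First I show that the $\Gal(\overline\kappa/\kappa)$-orbits $O$ on subgroups of $E[q]$ biject with the $\Gal(\overline{\F}_p/\F_p)$-orbits on the set of pairs $(\sigma,C)$ with $C\subseteq E^\sigma[q]$. Frobenius to the power $k$ returns $E$ to itself, so $\mathrm{Frob}_p$ acts transitively on the $\sigma$-index, and the stabilizer of the first factor acts as $\Gal(\overline\kappa/\kappa)$ on the subgroups. Hence each $O$ contributes $k\cdot\#O$ geometric points (with multiplicity) in a single $\Gal(\overline{\F}_p/\F_p)$-orbit of pairs. The map $(\sigma,C)\mapsto(E^\sigma/C,P^\sigma\bmod C)$ is Galois-equivariant. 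So the image of this orbit is the set of geometric points of the closed point $y_O$, and each of them is hit equally often, namely $k\#O/\deg y_O$ times. Note that non-isomorphic pairs could coincide as points of $X_1(n)$, but equivariance makes the fibres uniform anyway. This gives the multiplicity $[L:\kappa]\,k/\deg y_O$ once $\#O=[L:\kappa]$ is established. That equality is the orbit-stabilizer theorem: the stabilizer of $C$ in $\Gal(\overline\kappa/\kappa)$ is $\Gal(\overline\kappa/L)$, by the definition of the field of definition of $C$, so $\#O=[\Gal(\overline\kappa/\kappa):\Gal(\overline\kappa/L)]=[L:\kappa]$. The field $\kappa$ is finite, so $L$ is a finite (cyclic) extension and everything is finite.

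\emph{Kernel polynomial step.} Let $\tau\in\Gal(\overline\kappa/\kappa)$. If $\tau C=C$, then $\tau$ permutes the $x$-coordinates of $C\setminus\{0\}$, which are well defined modulo $\pm$, so $\tau$ fixes $g_C$. Conversely, if $\tau$ fixes $g_C$, then $\tau$ maps the set of $x$-coordinates of $C$ to itself. The set $C\setminus\{0\}$ is the full preimage of these $x$-coordinates on $E$, since a given $x$-value corresponds to the points $\pm Q$. Hence $\tau C\setminus\{0\}=C\setminus\{0\}$. Here $q$ is odd ($q\nmid 2n$ is not assumed, but $q\ne2$ is implicit in the setting; if $q=2$ the same argument works with $C=\{0,Q\}$). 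Thus the stabilizer of $C$ equals the stabilizer of $g_C$, whose fixed field is $\kappa(\text{coefficients of }g_C)$. So $L$ is that field.

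\emph{Main obstacle.} I expect the delicate point to be the bookkeeping in the counting step: correctly identifying the Galois action on pairs $(\sigma,C)$, and checking that the multiplicities $k\#O/\deg y_O$ are integers with uniform fibres, including when distinct subgroups give the same closed point. I would handle this by phrasing everything via the fibre product $z\times_{X}$ (the Hecke correspondence scheme), which is finite étale over $z$ of degree $q+1$. Its closed points are the $O$, with residue fields $L$, and pushforward of a closed point of residue degree $[L:\F_p]=k\#O$ onto its image of degree $\deg y_O$ gives the stated coefficient directly.
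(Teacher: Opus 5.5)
Your proposal is correct and is essentially the paper's proof. Both write $T_q(z)$ as a sum over the $k(q+1)$ pairs $(\sigma,C)$ and use Galois equivariance to see that a $\Gal(\overline\kappa/\kappa)$-orbit $O$ contributes $k\#O/\deg y_O$ copies of $y_O$ (the paper phrases this as $m_y=\sum_{O:\,y_O=y}\#O$). Both then obtain $\#O=[L:\kappa]$ from orbit--stabilizer, and identify the stabilizer of $C$ with that of $g_C$ because a subgroup of order $q$ is determined by the $x$-coordinates of its nonzero points. Your explicit uniform-fibre argument, or its correspondence-scheme version, just spells out a step the paper states briefly.
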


\begin{proof}
By definition $T_q(z)=\sum_{\sigma\in\Gal(\kappa/\F_p)}\sum_{C}\sigma\bigl(E/C,P\bmod C\bigr)$,
where $C$ ranges over all $q+1$ subgroups of order $q$ of $E[q]$. Grouping the $k(q+1)$ geometric points by the closed point that contains them, and using that Galois acts
transitively on the geometric points of a closed point, gives
$T_q(z)=\sum_y (m_y k/\deg y)\,y$ with $m_y=\#\{C:(E/C,P\bmod C)\in y\}$. This is the formula
implemented in \cite{mdmagma}. Now $E$ and $P$ are $\kappa$-rational, so for
$C'=F^i(C)$ with $F$ the $\kappa$-Frobenius we have
$(E/C',P\bmod C')=F^i\bigl(E/C,P\bmod C\bigr)$, and these two geometric points lie in the same
closed point of $X$. Hence $m_y=\sum_{O:\,y_O=y}\#O$, and it suffices to run over one $C$ per
orbit together with $\#O$. By the orbit--stabilizer theorem $\#O=[L:\kappa]$ where $L$ is the
fixed field of the stabilizer of $C$. Finally a subgroup of order $q$ is determined by, and
determines, the set of $x$-coordinates of its non-zero points, so the stabilizer of $C$ equals
the stabilizer of $g_C$, i.e.\ $L=\kappa(\text{coefficients of }g_C)$.
\end{proof}

In the implementation, which assumes $q$ odd, a generator of $C$ is obtained from a root $x_1$ of an irreducible factor of the $q$-division polynomial of $E$ over $\kappa$. The remaining $x$-coordinates $x(jQ)$, $1\leq j\leq (q-1)/2$, are rational functions of $x_1$ over $\kappa$ and
hence lie in $\kappa(x_1)$. The isogeny $E\to E/C$ is computed by V\'elu's formulas over $L$, and the closed
point $y_O$ by \texttt{mdmagma}'s \texttt{ModuliPoint}. The factors of the division polynomial
belonging to the orbit $O$ are the irreducible factors of the norm $N_{L/\kappa}(g_C)=\prod_{C'\in O}g_{C'}$, which lets us process each orbit exactly once. The \gitlink{fast_hecke.m}{code} asserts both $\sum_{O}\#O=q+1$ and, for each orbit, that the degrees add up correctly. Every sieve prime used in this paper is odd, and the code rejects $q=2$. Note that the lemma itself is valid for $q=2$ as well, where $g_C$ is linear.

\begin{lemma}\label{lem:fastorbits}
Let $x,x'$ be non-cuspidal closed points of $X_1(n)_{\F_p}$ of the same degree $k$, represented
by $(E_1,P_1)$ over $\kappa_1$ and $(E_2,P_2)$ over $\kappa_2$. Then $x'=\dm ax$ for some
$a\in(\torz n)^\times$ if and only if there exist a field isomorphism $\kappa_2\to\kappa_1$, a
power $\sigma$ of the Frobenius of $\kappa_1$ and an isomorphism
$\iota:E_2^{\sigma}\to E_1$ defined over $\kappa_1$ such that $\iota(P_2^{\sigma})$ generates
$\dm{P_1}$.
\end{lemma}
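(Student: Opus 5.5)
The plan is to unwind what it means for two closed points of $X_1(n)_{\F_p}$ to be related by a diamond operator, using the moduli interpretation over $\overline{\F}_p$. Since $n>4$, $X_1(n)$ is a fine moduli space. The geometric points of a closed point $x$ of degree $k$ are the $k$ Frobenius conjugates of one geometric point. Fix an embedding $\kappa_1\hookrightarrow\overline{\F}_p$, which identifies $\kappa_1$ with the unique subfield $\F_{p^k}$. Choose the isomorphism $\kappa_2\to\kappa_1$ so that, after composing, $(E_2,P_2)$ and its conjugates also live over $\F_{p^k}$. Then the geometric points of $x$ are $(E_1^{\tau},P_1^{\tau})$ and those of $x'$ are $(E_2^{\tau},P_2^{\tau})$, with $\tau$ running over $\Gal(\F_{p^k}/\F_p)$, i.e.\ over the powers of Frobenius.

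For ``$\Rightarrow$'': $x'=\dm a x$ means that the geometric point $(E_2,P_2)$ of $x'$ equals $\dm a$ applied to some geometric point of $x$. Since $\dm a$ is defined over $\F_p$ and commutes with Galois, equivalently $(E_2^{\sigma},P_2^{\sigma})\simeq (E_1,aP_1)$ over $\overline{\F}_p$ for some power $\sigma$ of Frobenius. By fineness, points of $Y_1(n)$ are isomorphism classes of pairs, so there is an isomorphism $\iota:E_2^{\sigma}\to E_1$ with $\iota(P_2^{\sigma})=aP_1$, which generates $\dm{P_1}$.

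The one step needing care is that $\iota$ is defined over $\kappa_1$ and not merely over $\overline{\F}_p$. Both pairs are $\kappa_1$-rational, so for the $\kappa_1$-Frobenius $F$ the map $\iota^{F}$ is again an isomorphism sending $P_2^{\sigma}$ to $aP_1$. Hence $\iota^{-1}\iota^{F}$ is an automorphism of $(E_2^{\sigma},P_2^{\sigma})$. A pair $(E,P)$ with $P$ of order $n>4$ has only the trivial automorphism, since any automorphism fixing $P$ is $\pm1$ or has $j\in\{0,1728\}$, and in the latter case fixes no point of order $>3$ except via the identity. So $\iota^{F}=\iota$, and $\iota$ is defined over $\kappa_1$.

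For ``$\Leftarrow$'': given $\iota$ with $\iota(P_2^{\sigma})=bP_1$ for some $b\in(\torz n)^\times$, the geometric point $(E_2^{\sigma},P_2^{\sigma})$ of $x'$ equals $(E_1,bP_1)=\dm b(E_1,P_1)$. Hence the closed point $x'$ contains a geometric point of $\dm b x$, and the two closed points coincide.

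I expect the main obstacle to be bookkeeping rather than mathematics. One must match the field isomorphism $\kappa_2\to\kappa_1$ to the choice of embeddings, and check that allowing an arbitrary $\sigma$ accounts for the different identifications. One should also note that the sign ambiguity $\dm a=\dm{-a}$ is harmless, since $-1\in\Aut(E_1)$.
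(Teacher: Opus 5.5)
Your proposal is correct and follows the paper's route. Both directions come from reading $x'=\dm a x$ as Galois conjugacy of geometric points, $(E_2,P_2)^{\sigma}\simeq(E_1,aP_1)$ over $\overline{\F}_p$, and the descent of $\iota$ to $\kappa_1$ via $\iota^{-1}\iota^{F}$ is the paper's rigidity argument (the paper uses $\iota^{F}\iota^{-1}$). One minor fix: your rigidity justification should also dismiss $\zeta=-1$ explicitly, which is easiest in the paper's form — for $\zeta\neq1$ the fixed points form $\ker(\zeta-1)$, which has at most four geometric points and so cannot contain a point of order $n\geq5$.
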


\begin{proof}
The diamond operators commute with the Galois action, so $x'=\dm ax$ if and only if the
geometric point $(E_1,aP_1)$ is $\Gal(\overline{\F}_p/\F_p)$-conjugate to $(E_2,P_2)$, i.e. if and only if there are $\sigma$ and an isomorphism of pairs
$\iota:(E_2,P_2)^{\sigma}\to (E_1,aP_1)$ over $\overline{\F}_p$. Such an $\iota$ is
automatically defined over $\kappa_1$, since otherwise $\iota^{F}\iota^{-1}$, where $F$ is the
$\kappa_1$-Frobenius $x\mapsto x^{p^k}$, would be a
non-trivial automorphism of $E_1$ fixing the point $aP_1$, which is $\kappa_1$-rational and of
order $n\geq5$. That is impossible, because for a non-trivial automorphism $\zeta$ of an
elliptic curve $\ker(\zeta-1)$ has at most four geometric points, the bound being attained by
$\zeta=[-1]$, for which $\ker(\zeta-1)=E[2]$.
\end{proof}

The implementation loops over the $k$ Frobenius powers $\sigma$, over the isomorphisms
$E_2^\sigma\to E_1$ (one returned by \texttt{IsIsomorphic}, composed with all
$\kappa_1$-rational automorphisms of $E_1$) and over $a\in(\torz n)^\times$. Points are first grouped by the minimal polynomial of $j$ over $\F_p$ and by degree, which cannot split an orbit because the diamond operators preserve both. On $X_1(45)_{\F_7}$ the degree $5$ place for which
\texttt{mdmagma} needed $7114$ seconds to compute $T_{11}$ is handled in $4.3$ seconds by
\Cref{lem:fasthecke}, and on $X_1(57)_{\F_5}$ the orbit reduction of the $756$ places of degree
$5$ takes $11$ seconds instead of $16$--$21$ hours. We validated both routines against \texttt{mdmagma}: the divisors $T_q(x)$ \gitlink{test_fast_hecke.m}{agree} on $12$ places for each of several
$(n,p,\deg x,q)$ on $X_1(26)_{\F_7}$, $X_1(28)_{\F_5}$ and $X_1(30)_{\F_7}$,
and on $15$ curve/degree pairs the orbits of the returned
representatives \gitlink{test_fast_orbits.m}{are pairwise disjoint}, cover the input set, and are equal in number to
\texttt{mdmagma}'s.

\bibliographystyle{amsalpha}
\bibliography{quintic}

\end{document}